\newtheorem{theorem}{Theorem}[section]
\newtheorem{definition}[theorem]{Definition}
\newtheorem{proposition}[theorem]{Proposition}
\newtheorem{corollary}[theorem]{Corollary}
\newtheorem{lemma}[theorem]{Lemma}
\newtheorem{remark}[theorem]{Remark}
\newtheorem*{theorem-non}{Theorem}
\theoremstyle{definition}
\newtheorem{example}[theorem]{Example}
\newcommand{\M}{\mathcal{M}}
\newcommand{\R}{\mathbb{R}}
\newcommand{\Z}{\mathbb{Z}}
\newcommand{\N}{\mathbb{N}}
\renewcommand{\P}{\mathcal{P}}
\newcommand{\essinf}{{\mathrm{ess}\inf}}
\def\R{{\mathbb R}}
\def\RR{{\mathcal R}}
\def\N{{\mathbb N}}
\def\Z{{\mathbb Z}}
\def\H{{\mathbb H}}
\def\O{{\mathcal O}}
\def\L{{\mathcal L}}
\def\P{{\mathcal P}}
\def\M{{\mathcal M}}
\def\T{{\mathcal T}}
\def\le{\leqslant}
\def\ge{\geqslant}
\def\Re{\mathcal{R}}
\def\M{\mathcal{M}}
\begin{document}

\title[Dimension of geodesics that diverge on average]{On the Hausdorff dimension of geodesics that diverge on average}
\date{\today}

\author[F. Riquelme]{Felipe Riquelme}
\address{IMA, Pontificia Universidad Cat\'olica de Valpara\'iso, Blanco Viel 596, Valpara\'iso, Chile.}
\email{\href{felipe.riquelme@pucv.cl}{felipe.riquelme@pucv.cl}}
\urladdr{\href{http://ima.ucv.cl/academicos/felipe-riquelme/}{http://ima.ucv.cl/academicos/felipe-riquelme/}}
\thanks{F.R. was supported by FONDECYT Iniciaci\'on 11190461 and Regular 1231257}

 \author[A.~Velozo]{Anibal Velozo}  \address{Facultad de Matem\'aticas,
Pontificia Universidad Cat\'olica de Chile (PUC), Avenida Vicu\~na Mackenna 4860, Santiago, Chile}
\email{\href{anibal.velozo@gmail.com}{apvelozo@mat.uc.cl}}
\urladdr{\href{https://sites.google.com/view/apvelozo}{https://sites.google.com/view/apvelozo}}
\thanks{A.V. was supported by FONDECYT Iniciaci\'on 11220409.}
\thanks{The authors thank the referees for their valuable comments and suggestions.}

\subjclass[2020]{%
37C45, 
37D40, 
37D35, 
28A78, 
28D20. 
}

\begin{abstract}
In this article we prove that the Hausdorff dimension of geodesic directions that are recurrent and diverge on average coincides with the entropy at infinity of the geodesic flow for any complete, pinched negatively curved  Riemannian manifold. Furthermore, we prove that the entropy of a $\sigma$-finite, infinite, ergodic and conservative invariant measure is bounded from above by the entropy at infinity of the geodesic flow. 
\end{abstract}

\maketitle

\section{Introduction}\label{section:Intro}

In 1984 Sullivan \cite{su} established several groundbreaking results regarding the action of Kleinian groups on hyperbolic space. He proved that the Hausdorff dimension of the limit set of a geometrically finite group is equal to its critical exponent, and that the critical exponent of a convex cocompact group is equal to the topological entropy of the geodesic flow on the quotient manifold. These results highlight the close relationship among the \emph{Hausdorff dimension of limit sets}, \emph{entropy theory} and \emph{critical exponents}. 

Since Sullivan's work, several generalizations of his results have been obtained. Bishop and Jones \cite{bj} proved that the Hausdorff dimension of the radial limit set of any non-elementary group coincides with its critical exponent. Shortly after, Paulin \cite{pau} generalized this result to Kleinian groups acting on Hadamard manifolds of any dimension. On the other hand, Otal and Peign\'e \cite{op} proved that the critical exponent of a non-elementary Kleinian group equals the topological entropy of the quotient manifold, where the compactness assumption is removed and the topological entropy is defined by means of the variational principle. 

In this article we study the relation between the Hausdorff dimension of a dynamically defined subset of the limit set, the \emph{diverging on average radial limit set}, and the \emph{entropy at infinity of the geodesic flow}. The entropy at infinity of a dynamical system measures the chaoticity of the system at the ends of phase space and plays a important role in the study of the ergodic theory and thermodynamic formalism of non-compact dynamical systems, for instance see \cite{ru, bu, irv, rv, st,v, itv, gst}.


\subsection{Statements of the main results} 
Let $(\widetilde{\M},g)$ be a complete, simply connected, pinched negatively curved Riemannian manifold, and let $\Gamma$ be a discrete, torsion free, non-elementary subgroup of isometries of $\widetilde{\M}$. We will assume that the sectional curvatures of $g$ are bounded above by $-1$. The Gromov boundary at infinity of $\widetilde{\M}$ is denoted by $\partial_\infty \widetilde{\M}$. Let $\M=\widetilde{\M}/\Gamma$ be the quotient manifold, and let $p_\Gamma:T^1\widetilde{\M}\to T^1\M$ be the projection map. The geodesic flow on $\M$ is denoted by $(g_t)_{t\in\R}$, where $g_t:T^1\M\to T^1\M$ is the time $t$-map of the geodesic flow.  Let $\chi_A$ denote the characteristic function of a subset $A$.

\begin{definition}\label{def:rec} Let $v\in T^1\M$. We say that $v$ {\bf diverges on average} if, for any compact set $W\subseteq T^1\M$, we have
$$\lim_{T\to\infty}\frac{1}{T}\int_0^T \chi_W(g_tv)dt = 0.$$
We say that $v$ is {\bf divergent} if, for any compact set $W\subseteq T^1\M$, there exists $T>0$ such that $g_t(v)\in T^1\M\setminus W$ for every $t\ge T$. A vector is {\bf recurrent} if it is not divergent.
\end{definition}

A vector in $T^1\widetilde{\M}$ is said to be {divergent on average}, {divergent} or {recurrent}  depending on whether its projection to $T^1\M$ under the map $p_\Gamma$ has the corresponding property. Given $x$ in $\M$ or $\widetilde{\M}$, we use $\mathcal{R}(x)$ to denote the set of recurrent vectors based at $x$, and $\mathcal{DA}(x)$ to denote the set of vectors based at $x$ that diverge on average. We define $\mathcal{RDA}(x)=\mathcal{R}(x)\cap \mathcal{DA}(x),$ as the set of vectors based at $x$ that are both recurrent and divergent on average.

Let $z\in \widetilde{\M}$ be a reference point. Let $\Theta_z:T^1_z\widetilde{\M}\to \partial_\infty\widetilde{\M}$ be the map that sends $v\in T^1_z\widetilde{\M}$ to the class of the geodesic ray starting at $z$ in the direction of $v$. The radial limit set of $\Gamma$, denoted by  $\Lambda_\Gamma^{rad}$, satisfies that $\Lambda_\Gamma^{rad}=\Theta_z(\mathcal{R}(z))$. We define the \emph{diverging on average radial limit set} of $\Gamma$ by
$$\Lambda_\Gamma^{\infty, rad}=\Theta_z(\mathcal{RDA}(z)).$$
Since two asymptotic rays approach exponentially fast, the sets $\Theta_z(\mathcal{R}(z))$ and $\Theta_z(\mathcal{DA}(z))$ are independent of the reference point $z$. 

The \emph{entropy at infinity of the geodesic flow} on $\M$ is denoted by $\delta_\Gamma^\infty$. We refer the reader to Section \ref{ss:geo} for precise definitions. For a set $A$ in a metric space we denote by $\text{HD}(A)$ the Hausdorff dimension of $A$. The main result of this article is the following.

\begin{theorem}\label{main1} Let $(\M,g)$ be a complete, pinched negatively curved Riemannian manifold which is non-elementary. Then, $$\emph{HD}(\Lambda_\Gamma^{\infty, rad})=\delta_\Gamma^\infty,$$
where the Hausdorff dimension of the diverging on average radial limit set is computed using the Gromov-Bourdon visual metric on $\partial_\infty \widetilde{\M}$. 
\end{theorem}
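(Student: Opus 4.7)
The plan is to prove the equality by establishing the two inequalities separately, each via transporting the problem between the Gromov--Bourdon visual metric on $\partial_\infty \widetilde{\M}$ and the dynamics of $(g_t)$ on $T^1\M$ using the shadow lemma, so that Hausdorff-dimension estimates reduce to exponential counts which can be compared with the entropy at infinity $\delta_\Gamma^\infty$.

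For the upper bound $\emph{HD}(\Lambda_\Gamma^{\infty,rad}) \le \delta_\Gamma^\infty$, fix a reference point $z$. A vector $v \in \mathcal{RDA}(z)$ gives a geodesic $c_v$ from $z$ to $\Theta_z(v)$ that returns to the $1$-neighborhood of $\Gamma z$ inside some compact ball infinitely often (recurrence) while the fraction of time $c_v$ spends in any ball $B(z,R)$ tends to $0$ (divergence on average). Each return epoch at level $T$ is recorded by a group element $\gamma$ with $d(z,\gamma z)\approx T$, and the shadow $\mathcal{O}_z(B(\gamma z, r_0))$ has visual diameter comparable to $e^{-T}$. I would partition $\Lambda_\Gamma^{\infty,rad}$ according to parameters $(R,\varepsilon)$ controlling the escape fraction and, for each piece, cover it by such shadows. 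The remaining task is to show that the count $N(T,R,\varepsilon)$ of admissible $\gamma$ with $d(z,\gamma z)\in[T,T+1]$ satisfies
$$\limsup_{T\to\infty}\frac{1}{T}\log N(T,R,\varepsilon) \le \delta_\Gamma^\infty + \eta(R,\varepsilon),$$
with $\eta\to 0$ as $R\to\infty$ and $\varepsilon\to 0$; this is a counting counterpart of the defining property of the entropy at infinity, namely that orbits localized at the ends of $T^1\M$ realize topological complexity at most $\delta_\Gamma^\infty$. A routine $s$-Hausdorff measure computation, summing $N(T,R,\varepsilon)e^{-sT}$, then closes the upper bound.

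For the reverse inequality, I would appeal to a variational characterization of $\delta_\Gamma^\infty$ (available in the entropy-at-infinity literature for geodesic flows in pinched negative curvature): there is a sequence of $(g_t)$-invariant ergodic probability measures $\mu_n$ on $T^1\M$ with $h_{\mu_n}\to\delta_\Gamma^\infty$ and $\mu_n\to 0$ in the vague topology. Fix $\eta>0$ and $\mu=\mu_n$ with $h_\mu\ge \delta_\Gamma^\infty-\eta$. Katok's entropy formula on a large $\mu$-measure compact set yields, for each long time $\tau$ and small $\epsilon$, a $(\tau,\epsilon)$-separated family of orbit segments of cardinality $\ge e^{(h_\mu-o(1))\tau}$, each essentially $\mu$-generic and hence spending most of its time far from any prescribed compact set. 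The heart of the argument is to concatenate such long escape segments through short ``return bridges'' landing in a fixed compact set $W_0$, producing a Cantor-like set of directions whose orbits (i) return to $W_0$ infinitely often (recurrence), (ii) accumulate density zero in every compact subset of $T^1\M$ (divergence on average), and (iii) shadow onto $\partial_\infty\widetilde{\M}$ as a compact set supporting a Frostman measure of exponent $h_\mu$ in the visual metric, by the shadow lemma. The mass distribution principle gives $\emph{HD}(\Lambda_\Gamma^{\infty,rad}) \ge h_\mu \ge \delta_\Gamma^\infty-\eta$, and $\eta\to 0$ finishes.

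The hardest step is the lower bound. The block lengths $\tau_n$, the return bridges, and an exhaustion of $T^1\M$ by compact sets must be balanced so that the bridges are long enough to certify returns to $W_0$ yet short enough not to destroy divergence on average or the entropy/separation estimates, and the concatenated pseudo-orbits must be shadowed by genuine geodesics with controlled visual-metric distortion, using the exponential divergence of geodesics in pinched negative curvature. In addition, the existence of the approximating sequence $\mu_n$ with both $h_{\mu_n}\to\delta_\Gamma^\infty$ and vague convergence to zero is itself a nontrivial ingredient, closely tied to the article's second main result bounding the entropy of $\sigma$-finite ergodic conservative invariant measures by $\delta_\Gamma^\infty$; on the upper-bound side, the remaining subtlety is the rigorous counting estimate for $N(T,R,\varepsilon)$, for which a Bowen-ball characterization of the entropy at infinity is the natural tool.
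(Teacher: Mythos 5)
Your upper bound follows essentially the same route as the paper: cover the limit set by shadows $\mathcal{O}_o(\gamma\cdot o,r)$ attached to orbit points $\gamma$ whose geodesic segment $[o,\gamma\cdot o]$ spends at most an $\alpha$-proportion of its length in a fixed compact neighborhood, and sum $e^{-s\,d(o,\gamma\cdot o)}$ over them. Be aware, though, that the counting estimate you defer (``$N(T,R,\varepsilon)\lesssim e^{(\delta_\Gamma^\infty+\eta)T}$'') is not a routine consequence of the definition of $\delta_\Gamma^\infty$ used here (the critical exponent of the sets $\Gamma_{\widetilde{K}}$); it is exactly Theorem 5.1 of Gouezel--Schapira--Tapie, quoted in the paper as Proposition \ref{prop:gst1}, and the ``Bowen-ball characterization of the entropy at infinity'' you invoke is only known to agree with $\delta_\Gamma^\infty$ through results of the same depth. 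So on this half you are aligned with the paper provided you cite that input rather than treat it as folklore.

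Your lower bound is a genuinely different route. The paper never uses invariant probability measures: it exploits the divergence of the Poincar\'e series of $\Gamma_{\widetilde{K}_n}$ at exponents $s<\delta_\Gamma^\infty$ to extract, inside two angularly separated regions at infinity, large finite sets of orbit points of $\Gamma_{\widetilde{K}_n}$ and $g\Gamma_{\widetilde{K}_n}$; it concatenates them into a tree of piecewise geodesic rays with angles bounded below, approximates these by genuine rays via local-quasi-geodesic stability, checks divergence on average using Lemma \ref{lem:kr}, and puts a Frostman measure of exponent $s$ on the resulting Cantor set. Your route instead outsources the hard combinatorics to the variational characterization of $\delta_\Gamma^\infty$ (existence of invariant probability measures escaping to infinity with entropy approaching $\delta_\Gamma^\infty$), which does exist in the literature (Gouezel--Schapira--Tapie, Velozo) but is itself proved by constructions of the same nature as the paper's; what your approach buys is conceptual transparency and reusability of the specification-type machinery, at the cost of resting on a much deeper external theorem and a concatenation/shadowing scheme that you only sketch.

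There is, moreover, a concrete flaw in the lower bound as written. After introducing the sequence $\mu_n\to 0$ you fix a single $\mu=\mu_n$ with $h_\mu\ge\delta_\Gamma^\infty-\eta$ and claim that $\mu$-generic orbit segments ``spend most of their time far from any prescribed compact set.'' This is false for a fixed probability measure: $\mu$ is tight, so a $\mu$-generic orbit spends asymptotic time proportion $\mu(T^1Y)\approx 1$ in a sufficiently large compact $Y$, and a Cantor set built from blocks generic for one fixed $\mu$ lands in $\Lambda_\Gamma^{rad}$ but not in $\Lambda_\Gamma^{\infty,rad}$ --- its empirical measures converge to $\mu$, not to zero. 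The construction must interleave blocks drawn from infinitely many measures $\mu_{n_k}\to 0$ (all with entropy at least $\delta_\Gamma^\infty-\eta$) along a compact exhaustion, with block lengths growing fast enough that the time spent in $K_j$ during the early blocks is asymptotically negligible, and one must check that switching measures and inserting return bridges does not degrade the separation and ball-measure estimates needed for the Frostman exponent; this diagonalization is precisely the analogue of the paper's choice of the lengths $\ell_n^\kappa$ in \eqref{eq:ell} and of the sequence $(h_n)$ in Lemma \ref{lem:nltimes}, and it is the step your proposal leaves essentially unaddressed beyond the remark that parameters ``must be balanced.''
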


If $\M = \H^n / \Gamma$ is hyperbolic, then the map $\Theta_z$ is bi-Lipschitz when $T^1_z \H^n$ is endowed with the hyperbolic metric. The next result follows directly from the main result and the fact that the Hausdorff dimension is preserved under bi-Lipschitz maps.

\begin{corollary}  Let $(\M,g)$ be a non-elementary complete hyperbolic manifold. Then, $$\emph{HD}(\mathcal{RDA}(x))=\delta_\Gamma^\infty,$$
where the Hausdorff dimension is computed with respect to the hyperbolic metric and $x\in\M$ is any point.
\end{corollary}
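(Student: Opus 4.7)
The plan is to deduce the corollary from Theorem \ref{main1} by chaining two bi-Lipschitz identifications that link the set $\mathcal{RDA}(x) \subset T^1_x\M$ to the diverging-on-average radial limit set $\Lambda_\Gamma^{\infty,rad} \subset \partial_\infty \widetilde{\M}$. Since Hausdorff dimension is preserved under bi-Lipschitz homeomorphisms, the desired equality will follow without any new geometric input.

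First, fix any lift $z \in \widetilde{\M}$ with $p_\Gamma(z) = x$. The differential $d_zp_\Gamma$ is a linear isometry $T_z\widetilde{\M} \to T_x\M$, so it restricts to an isometry of the unit tangent spheres when these are equipped with the hyperbolic Riemannian metric inherited from the ambient tangent space. Because recurrence and divergence on average for vectors in $T^1\widetilde{\M}$ are \emph{defined} via the projection $p_\Gamma$, this map sends $\mathcal{RDA}(z)$ isometrically onto $\mathcal{RDA}(x)$; in particular $\emph{HD}(\mathcal{RDA}(x)) = \emph{HD}(\mathcal{RDA}(z))$, where the right-hand side is computed with the hyperbolic metric on $T^1_z\H^n$.

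Second, in the hyperbolic setting the visual map $\Theta_z : T^1_z\H^n \to \partial_\infty\H^n$ is bi-Lipschitz when the domain carries the hyperbolic sphere metric and the codomain is equipped with the Gromov–Bourdon visual metric based at $z$. In the Poincar\'e ball model with $z$ placed at the origin, this reduces to a short Busemann-cocycle computation showing that the visual metric is comparable to the round metric on $S^{n-1}$; this is precisely the statement already invoked in the paragraph preceding the corollary. Since $\Theta_z(\mathcal{RDA}(z)) = \Lambda_\Gamma^{\infty,rad}$ by the very definition of the diverging-on-average radial limit set, combining with Theorem \ref{main1} gives
$$\emph{HD}(\mathcal{RDA}(x)) = \emph{HD}(\mathcal{RDA}(z)) = \emph{HD}(\Lambda_\Gamma^{\infty,rad}) = \delta_\Gamma^\infty.$$
There is no serious obstacle here: both identifications are classical, and the heavy lifting has already been carried out in the proof of Theorem \ref{main1}. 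The independence of $x$ in the statement is automatic, since the right-hand side does not depend on $x$ and $\Lambda_\Gamma^{\infty,rad}$ does not depend on the chosen basepoint $z$.
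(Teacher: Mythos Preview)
Your proposal is correct and follows essentially the same approach as the paper: the paper simply states that in the hyperbolic case $\Theta_z$ is bi-Lipschitz between $T^1_z\H^n$ with the hyperbolic metric and $\partial_\infty\H^n$ with the Gromov--Bourdon visual metric, and then invokes Theorem~\ref{main1} together with the bi-Lipschitz invariance of Hausdorff dimension. Your write-up just makes the lift from $x\in\M$ to $z\in\widetilde{\M}$ explicit, which the paper leaves implicit.
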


The entropy at infinity of the geodesic flow takes a more concrete form in the case of geometrically finite manifolds (for precise definitions, see \cite{Bow}). It was proved in \cite[Proposition 7.16]{st} (see also Theorem 1.1 and Theorem 1.3 in \cite{rv}) that if $(\M, g)$ is geometrically finite with parabolics, then $\delta^\infty_\Gamma = \max\{\delta_\P : \P \subset \Gamma \text{ is parabolic}\}$, where $\delta_G$ is the critical exponent of the subgroup of isometries $G$. In particular, this provides a simpler formula for the Hausdorff dimension of the diverging on average radial limit set. For instance, if $\M$ is a non-compact hyperbolic manifold of finite volume, then  $\text{HD}(\mathcal{RDA}(x))=\frac{n-1}{2}$, where $n=\dim \M$. More generally, if $\M$ is hyperbolic and geometrically finite with cusps, then the Hausdorff dimension is equal to half the maximal rank of the parabolic subgroups of $\Gamma$. In contrast, for geometrically infinite manifolds, there is no simple formula for the entropy at infinity. Specific examples of geometrically infinite manifolds, where the entropy at infinity is studied, can be found in \cite[Section 7]{st}. In particular, Theorem \ref{main1} applies to the examples in \cite[Theorem 7.24]{st}, showing that the dimension of recurrent geodesic orbits that diverge on average can be strictly smaller than that of recurrent geodesic orbits. 

\noindent\\
{\bf Application to the entropy theory of infinite measures.} It is known that the topological entropy of the geodesic flow coincides with the critical exponent for non-elementary, pinched, negatively curved manifolds (see \cite{op} or Theorem \ref{op}). As an application of Theorem \ref{main1}, we establish a relationship between the entropy at infinity of the geodesic flow and the entropy of infinite, $\sigma$-finite, ergodic, and conservative measures. Following Ledrappier \cite{Led}, we define the entropy of an infinite measure in terms of the exponential decay of the measure of dynamical balls. We denote the entropy of a measure $m$ by $h(m)$ (see Definition \ref{def:ent}). In Section \ref{entinf} we prove the following result.

\begin{theorem}\label{main2} Let $m$ be an infinite Borel measure on $T^1\M$ which is $\sigma$-finite,  invariant by the geodesic flow, ergodic and conservative. Then, $h(m)\leq \delta_\Gamma^\infty$.
\end{theorem}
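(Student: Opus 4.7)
The plan is to combine Theorem~\ref{main1} with a mass distribution argument on the Gromov boundary. The first step is to check that $m$-almost every vector is both recurrent and diverges on average. Recurrence follows immediately from conservativity. For divergence on average, if $A\subset T^1\M$ satisfies $m(A)<\infty$, a standard fact from infinite ergodic theory (a consequence of Hopf's ratio theorem, applied with a set of infinite $m$-measure in the denominator) yields $\tfrac{1}{T}\int_0^T \chi_A(g_tv)\,dt\to 0$ for $m$-a.e.\ $v$. Applying this to a countable exhaustion of $T^1\M$ by compact sets (furnished by $\sigma$-finiteness), we conclude that $m$ is concentrated on the set of vectors that are both recurrent and diverge on average.

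Next I would transfer the problem to the Gromov boundary. Lift $m$ to a $\Gamma$-invariant $\sigma$-finite measure $\tilde m$ on $T^1\widetilde{\M}$, fix a reference point $z\in\widetilde{\M}$, and choose a relatively compact Borel set $U\subset T^1\widetilde{\M}$ with footpoints contained in a fundamental domain for $\Gamma$ and $0<\tilde m(U)<\infty$. Push the finite measure $\tilde m|_U$ forward through the endpoint map $v\mapsto v^+$ to obtain a finite Borel measure $\nu$ on $\partial_\infty\widetilde{\M}$. By the first step and $\Gamma$-invariance, $\nu$ is supported on $\Theta_z(\mathcal{RDA}(z))=\Lambda_\Gamma^{\infty,rad}$, so by Theorem~\ref{main1} it suffices to prove $\mathrm{HD}(\mathrm{supp}\,\nu)\ge h(m)$.

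The third and main step is a local-dimension estimate. Ledrappier's definition of the entropy of an infinite measure yields, for every $\delta>0$, an $\epsilon>0$ such that $m(B(v,\epsilon,T))\le e^{-(h(m)-\delta)T}$ holds for $m$-a.e.\ $v$ and all sufficiently large $T$, where $B(v,\epsilon,T)$ is the geodesic flow Bowen ball. In pinched negative curvature a standard shadow comparison produces a constant $C=C(\epsilon,U)$ such that, for every $v\in U$, the preimage under $w\mapsto w^+$ of the visual ball $B_{\mathrm{vis}}(v^+,C^{-1}e^{-T})$ meets $U$ inside the Bowen ball $B(v,\epsilon,T)$. Therefore $\nu(B_{\mathrm{vis}}(v^+,C^{-1}e^{-T}))\le e^{-(h(m)-\delta)T}$ for $\nu$-a.e.\ boundary point $v^+$ and all large $T$. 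Hence the lower local dimension of $\nu$ is at least $h(m)-\delta$ $\nu$-almost everywhere, and the mass distribution principle gives $\mathrm{HD}(\mathrm{supp}\,\nu)\ge h(m)-\delta$. Letting $\delta\to 0$ and invoking Theorem~\ref{main1} yields $h(m)\le \delta_\Gamma^\infty$.

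The most delicate point is securing the visual-ball versus Bowen-ball comparison in the third step uniformly over footpoints in $U$, rather than only at the reference point $z$; this is a shadow-lemma-type statement that is standard in pinched negative curvature, but the bookkeeping with the finite-measure cross-section $U$, together with a change of basepoint, must be executed carefully so that the resulting bound on $\nu$ passes through honestly. A secondary subtlety is extracting the honest liminf version of the dynamical-ball decay from Ledrappier's definition for the $\sigma$-finite measure $m$; this should follow essentially verbatim from the classical argument, since the relevant Bowen balls have finite $m$-measure.
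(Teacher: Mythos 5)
Your proposal follows essentially the same strategy as the paper: show via Hopf's ratio ergodic theorem that $m$ is carried by the recurrent, diverging-on-average vectors; push a finite restriction of the lift of $m$ forward through the endpoint map to a finite boundary measure $\nu$ concentrated on $\Lambda_\Gamma^{\infty,rad}$; compare visual balls with Bowen balls via the shadow lemma to bound the lower local dimension of $\nu$ from below by $h(m)$; and conclude with Theorem~\ref{main1} and the variational principle for Hausdorff dimension (Theorem~\ref{teo:vphd} / Proposition~\ref{prop:ledhd}). The only packaging difference is that the paper takes the cross-section to be a small metric ball $B(\tilde v,r)$ with $4r$ below the injectivity radius at $\pi(v)$, which makes the lift/projection bookkeeping and the two shadow inclusions (the visual ball sits inside $\Phi(B_T(\tilde w,r))$, and $\Phi^{-1}\Phi(B_T(\tilde w,r))\cap B(\tilde v,r)\subseteq B_T(\tilde w,4r)$) entirely routine, whereas you work with a general relatively compact $U$ of finite $\tilde m$-measure and fold both inclusions into a single shadow-comparison statement; both the radius-independence of Ledrappier's entropy (handled in the paper by Remark~\ref{rem:fin}, citing \cite{pps}) and the injectivity of the projection on small Bowen balls are the two caveats you correctly flag at the end.
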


\begin{remark}
Let $M_\infty(T^1\M, g)$ denote the space of flow-invariant Borel $\sigma$-finite measures on $T^1\M$ that are infinite, ergodic, and conservative. Theorem \ref{main2} can be restated as the inequality 
$$\sup_{m\in M_\infty(T^1\M,g)} h(m)\le \delta_\Gamma^\infty.$$
Determining whether the inequality above is, in fact, an equality is an interesting problem. Such a result indicates a variational principle for the entropy of infinite measures.
\end{remark}

\subsection{Motivation and previous works}
The interest in the study of the set of divergent and divergent on average points, or directions in the case of the geodesic flow, is not new and has been a prominent subject of study in homogeneous dynamics and for the Teichm\"uller geodesic flow. For instance, Masur \cite{Masur} proved that for a given quadratic differential $q$ on a genus $g$ surface with $n$ punctures, the Hausdorff dimension of the set of angles $\theta$, for which $e^{i\theta}q$ diverges on average for the Teichm\"uller geodesic flow on the corresponding Teichmuller space is bounded from above by $1/2$. This is also motivated by the fact that if the measured foliation associated to $e^{i\theta}q$ is minimal and not uniquely ergodic, then the quadratic differential is divergent \cite{Masur}. It was proved by Apisa and Masur \cite{AM} that the set of angles for which $e^{i\theta}q$ diverges on average is in fact equal to $1/2$. In the context of homogeneous dynamics, Dani \cite{da} studied the existence and classification of divergent orbits for diagonalizable flows, establishing a correspondence between divergent orbits and singular vectors and matrices. Notably, Cheung \cite{ch} calculated the exact Hausdorff dimension of divergent orbits for the action of diag($e^t,e^t,e^{-2t}$) on $\text{SL}(3,\R)/\text{SL}(3,\Z)$. In the  real rank one situation, Kadyrov and Pohl \cite{kp} obtained both upper and lower bounds for the Hausdorff dimension of the set of points that diverge on average, see also \cite{ekp}. In the higher rank case, for the action of $g_t=\text{diag}(e^{nt},\ldots,e^{nt},e^{-mt},\ldots,e^{-mt})$ on $\text{SL}(m+n,\R)/\text{SL}(m+n,\Z)$ an upper bound was obtained by Kadyrov, Kleinbock, Lindenstrauss and Margulis \cite{kklm}. More recently, Das, Fishman, Simmons and Urba\'nski \cite{dfsu} proved that the upper bound obtained in \cite{kklm} is also a lower bound, and therefore establishing the equality. For countable Markov shifts  an upper bound for the Hausdorff dimension of the set of recurrent and diverging on average points has been obtained in \cite{itv}.

\section{Preliminaries}\label{ss:geo}

Let $(\widetilde{\M},g)$ be a complete, simply connected, pinched negatively curved  Riemannian manifold of dimension $n$. We assume that the sectional curvatures of the metric lie in the interval  $[-b^2,-1]$ for some $b\ge 1$. The group of isometries of $\widetilde{\M}$, denoted by $\text{Iso}(\widetilde{\M})$, consists of the set of diffeomorphisms that preserve the Riemannian metric. The Riemannian distance of $\widetilde{\M}$ will be denoted by $d$. 

\subsection{Boundary at infinity}\label{ss:binf} 

The  \emph{boundary at infinity}, or \emph{Gromov boundary} of $\widetilde{\M}$, which we denote by $\partial_\infty\widetilde{\M}$, is the set of equivalent classes of asymptotic geodesic rays on $\widetilde{\M}$. We endow $\partial_\infty\widetilde{\M}$ with the \emph{cone topology}, which makes it homeomorphic to the unit sphere in $\R^n$. More concretely, consider the map $\Theta_z:T^1_z\widetilde{\M}\to \partial_\infty\widetilde{\M}$ that sends $v\in T^1_z\widetilde{\M}$ to the class of the geodesic ray based at $z$ with initial direction $v$. The map $\Theta_z$ defines a  homeomorphism for every $z\in\widetilde{\M}$. 

 For $x,x'\in\widetilde{\M}$ and  $\xi,\xi'\in\partial_\infty\widetilde{\M}$ we will denote by 
\begin{itemize}
    \item $[x,x']$ the geodesic segment starting at $x$ and ending at $x'$.
    \item $[x,\xi)$ the geodesic ray starting at $x$ with end point at infinity $\xi$.
    \item $(\xi,\xi')$ the geodesic line having $\xi$ and $\xi'$ as end points at infinity.
\end{itemize}

Given $x,y\in\widetilde{\M}$ and $r>0$ consider the sets $$U_{x, y,r}=\{z\in\widetilde{\M}\cup\partial_\infty\widetilde{\M}: [x,z)\cap B(y,r)\ne \emptyset\}.$$  Topologize the compactification $\overline{\M}=\widetilde{\M}\cup\partial_\infty\widetilde{\M}$ such that sets of the form $U_{x,y,r}$ are a basis of neighborhoods of boundary points. The compactification $\overline{\M}$ is homeomorphic to the closed unit ball in $\R^n$. 

The \emph{Busemann cocycle} is the map $\beta:\partial_\infty\widetilde{\M}\times\widetilde{\M}\times\widetilde{\M}\to\R,$ defined by
$$\beta:(\xi,x,y)\mapsto \beta_\xi(x,y):=\lim_{t\to+\infty} d(x,\xi_t)-d(y,\xi_t),$$
where $t\mapsto\xi_t$ is the arc-length parametrization of a geodesic ray ending at $\xi$. The \emph{Gromov product} at $x\in \widetilde{\M}$ between $z,w\in\widetilde{\M}$ is defined as 
\[
\langle z,w\rangle_x =\frac{1}{2}[d(z,x)+d(x,w)-d(z,w)]
\]
Similarly, the Gromov product at $x\in\widetilde{\M}$ between $\xi,\eta\in\partial_\infty\widetilde{\M}$ is defined as $$\langle \xi,\eta\rangle_x=\lim_{\substack{t\to\infty}}\langle \xi_t,\eta_t\rangle_x,$$
where $t\mapsto\xi_t$ and $t\mapsto \eta_t$ are arc-length parametrizations of geodesic rays ending at $\xi$ and $\eta$, respectively. The Gromov product of two points at infinity is well defined and independent of the geodesics used to approximate the end points (see \cite[Proposition 2.4.3]{Bou}). Moreover, the formula $$\langle \xi,\eta\rangle_x =  \frac{1}{2}(\beta_\xi(x,y)+\beta_\eta(x,y)),$$ holds for any $y\in (\xi,\eta)$.

Since the sectional curvatures of $\widetilde{\M}$  are bounded above by $-1$, this is a CAT(-1) space, and therefore,  for every $x\in\widetilde{\M}$,  the formula 
\[
d_x(\xi,\eta)=
 \begin{cases} 
     e^{-\langle \xi,\eta\rangle_x} & \emph{ if }\xi\neq\eta \\
      0 & \emph{ if } \xi=\eta
 \end{cases}
\]
defines a metric on $\partial_\infty\widetilde{\M}$ (see \cite[Theorem 2.5.1]{Bou}). We refer to $d_x$ as a \emph{Gromov-Bourdon visual distance}. Every Gromov-Bourdon visual distance induces the cone topology on $\partial_\infty \widetilde{\M}$. Moreover, they are mutually conformal and $\text{Iso}(\widetilde{\M})$-invariant. More precisely,
\begin{equation}\label{eq:conformal}
d_{x'}(\xi,\eta)=\exp\left(\frac{1}{2}\left[\beta_\xi(x,x')+\beta_\eta(x,x')\right]\right)d_x(\xi,\eta),
\end{equation}
and
\begin{equation*}
d_{\gamma x}(\gamma\xi,\gamma\eta) = d_x(\xi,\eta).
\end{equation*}


Given $x,x'\in\widetilde{\M}$ and $r>0$, we define the \emph{shadow at infinity} from $x$ of the ball $B(x',r)$ as the set 
$$\mathcal{O}_x(x',r)=\{\eta\in\partial_\infty\widetilde{\M}: [x,\eta)\cap B(x',r)\neq\emptyset \}.$$
Denote by $$B_x(\xi,r)=\{\eta\in \partial_\infty\widetilde{\M}: d_x(\eta,\xi)<r \},$$ where $\xi\in \partial_\infty\widetilde{\M}$ and $r>0$. For every $x,x'\in\widetilde{\M}$, we denote by $\xi_{x,x'}\in\partial_\infty\widetilde{\M}$ the endpoint at infinity of the geodesic ray based at $x$ passing through $x'$. Following Kaimanovich (see \cite[Proposition 1.4]{Ka}), given $r>0$, there exists $c_0(r)>0$ such that 
\begin{equation}\label{lem:shadow}
B_x(\xi_{x,x'},c_0(r)^{-1}e^{-d(x,x')})\subseteq\mathcal{O}_x(x',r)\subseteq B_x(\xi_{x,x'},c_0(r)e^{-d(x,x')}).
\end{equation}
For future reference, we will need a more precise version of the left-hand inclusion.

\begin{lemma}\label{lem:shadow2} Let $r>0$. Then, for any $x,x'\in\widetilde{\M}$, we have that 
$$B_x(\xi_{x,x'},re^{-d(x,x')})\subseteq\mathcal{O}_x(x',r).$$
\end{lemma}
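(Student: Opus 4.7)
The plan is to exhibit, under the hypothesis $d_x(\eta,\xi_{x,x'}) < re^{-d(x,x')}$, an explicit point $z \in [x,\eta) \cap B(x',r)$. If $\eta = \xi_{x,x'}$, the ray passes through $x'$ and there is nothing to prove, so I may assume $\eta \ne \xi_{x,x'}$. Set $s_0 := d(x,x')$ and parametrize the rays $[x,\xi_{x,x'})$ and $[x,\eta)$ by arc length as $s \mapsto \xi_s$ and $s \mapsto \eta_s$, so that $\xi_{s_0} = x'$. The candidate is $z := \eta_{s_0}$, the point on $[x,\eta)$ at distance $s_0$ from $x$; the task reduces to bounding $d(\xi_{s_0},\eta_{s_0})$ in terms of $d_x(\xi_{x,x'},\eta)$.

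The main geometric input is the monotonicity of hyperbolic comparison angles at $x$, a classical consequence of the CAT(-1) condition (a hyperbolic analog of Alexandrov's lemma). For each $t>0$, let $\bar\theta_t \in (0,\pi]$ denote the comparison angle at the apex of the hyperbolic isoceles triangle with two sides of length $t$ and third side of length $d(\xi_t,\eta_t)$; the CAT(-1) condition ensures that $\bar\theta_t$ is non-decreasing in $t$. The hyperbolic law of cosines for this isoceles triangle simplifies to
\begin{equation*}
\sinh\!\bigl(d(\xi_t,\eta_t)/2\bigr) = \sinh(t)\sin(\bar\theta_t/2),
\end{equation*}
so the ratio $\sinh(d(\xi_t,\eta_t)/2)/\sinh(t) = \sin(\bar\theta_t/2)$ is itself non-decreasing in $t$. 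Matching leading exponentials in Bourdon's limit $d_x(\xi,\eta) = \lim_{t\to\infty} e^{(d(\xi_t,\eta_t)-2t)/2}$ shows that this ratio tends to $d_x(\xi_{x,x'},\eta)$ as $t \to \infty$, which combined with the monotonicity yields
\begin{equation*}
\sinh\!\bigl(d(\xi_t,\eta_t)/2\bigr) \,\leq\, \sinh(t)\, d_x(\xi_{x,x'},\eta) \qquad \text{for every } t > 0.
\end{equation*}

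Specializing to $t = s_0$ and using $\sinh(s_0)\,e^{-s_0} = (1-e^{-2s_0})/2 < 1/2$ gives $\sinh(d(x',z)/2) < r/2$, and the elementary bound $\operatorname{arcsinh}(u) \le u$ for $u \ge 0$ then yields $d(x',z) < 2\operatorname{arcsinh}(r/2) \le r$, placing $z \in [x,\eta) \cap B(x',r)$ as required. The principal obstacle I anticipate is justifying the monotonicity of $\bar\theta_t$: it is not among the items recorded in Section \ref{ss:binf} and has to be invoked from the CAT(-1) literature, though it is classical. Everything else is a direct manipulation of $\sinh$ and of the Gromov--Bourdon defining limit.
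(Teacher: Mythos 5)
Your proof is correct, and it takes a genuinely different path from the paper's. The paper works with the orthogonal projection $\hat y$ of $x'$ onto $[x,\eta_t]$, passes to a comparison triangle in $\mathbb{D}$ via the CAT$(-1)$ point comparison, applies the hyperbolic law of sines, and then extracts the bound $d(x',\hat y)\le e^{d(x,x')}d_x(\xi,\eta)$ by an elementary $\sinh$-estimate; the candidate point on $[x,\eta)$ never appears explicitly. You instead pick the concrete point $z=\eta_{s_0}$ at arc length $s_0=d(x,x')$ along $[x,\eta)$, reduce everything to the isoceles triangle $\triangle(x,\xi_t,\eta_t)$, and invoke the monotonicity of $M_{-1}$-comparison angles (the CAT$(\kappa)$ version of Alexandrov's lemma, cf.\ Bridson--Haefliger II.1.7(4) and its CAT$(\kappa)$ extension) to get $\sinh(d(\xi_t,\eta_t)/2)\le\sinh(t)\,d_x(\xi,\eta)$ for all $t$ at once; specializing $t=s_0$ and using $\sinh(s_0)e^{-s_0}<1/2$ with $\operatorname{arcsinh}(u)\le u$ finishes. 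The trade-off is clear: the paper stays inside the ingredients already recorded in Section~\ref{ss:binf} (plus the law of sines and a single-triangle comparison), at the cost of a somewhat longer chain of estimates involving a half-angle step and a $(1-e^{-2t})^{-1}$ factor that must then be sent to $1$; your route is shorter and more structural, but quietly imports angle monotonicity for CAT$(-1)$ spaces, a genuine extra lemma not stated in the paper. One pedantic remark: you should note that $d(\xi_t,\eta_t)\to\infty$ (automatic for $\eta\ne\xi_{x,x'}$) so the leading-exponential matching that identifies the limit of $\sinh(d(\xi_t,\eta_t)/2)/\sinh(t)$ with $d_x(\xi_{x,x'},\eta)$ is legitimate; you already excluded $\eta=\xi_{x,x'}$, so this is fine.
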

\begin{proof}
Let $\eta\in B_x(\xi,re^{-d(x,x')})$, where $\xi=\xi_{x,x'}$. Let $t \mapsto \xi_t$ and $t\mapsto\eta_t$ be the arc-length parametrizations of $[x,\xi)$ and $[x,\eta)$, respectively, such that  $\xi_0=\eta_0=x$. The triangle $\triangle(x,\xi_t,\eta_t)$ admits a comparison triangle $\bar{\triangle}(\bar{x},\bar{\xi_t},\bar{\eta_t})$ in $\mathbb{D}$, meaning it satisfies that $d(x,\xi_t)=d_1(\bar{x},\bar{\xi_t})$, $d(x,\eta_t)=d_1(\bar{x},\bar{\eta_t})$ and $d(\xi_t,\eta_t)=d_1(\bar{\xi_t},\bar{\eta_t})$, where $d_1$ denotes the hyperbolic distance in $\mathbb{D}$. Let $\bar{x}'\in [\bar{x},\bar{\xi_t}]$ be such that $d_1(\bar{x},\bar{x}')=d(x,x')$. Let $\bar{y}\in[\bar{x},\bar{\eta_t}]$ be the orthogonal projection of $\bar{x}'$ onto $[\bar{x},\bar{\eta_t}],$ and set $y\in [x,\eta_t]$ such that $d(x,y)=d_1(\bar{x},\bar{y})$. Since $\widetilde{\M}$ is a CAT(-1) space, this implies $d(x',y)\leq d_1(\bar{x}',\bar{y})$. Let $\hat{y}$ be the orthogonal projection of $x'$ onto $[x,\eta_t]$. Therefore, $d(x',\hat{y})\leq d(x',y)$, and hence, $d(x',\hat{y}) \leq d_1(\bar{x}',\bar{y}).$

Let $\theta$ be the angle at $\bar{x}$ between $[\bar{x},\bar{\xi_t}]$ and $[\bar{x},\bar{\eta_t}]$. Then, by the hyperbolic law of sines:
\[
\sin(\theta) = \frac{\sinh(d_1(\bar{x}',\bar{y}))}{\sinh(d_1(\bar{x},\bar{x}'))}, \quad\text{ and }\quad \sin(\theta/2) = \frac{\sinh(\frac{1}{2}d_1(\bar{\xi_t},\bar{\eta_t}))}{\sinh (d_1(\bar{x},\bar{\eta_t}))}.
\]

\begin{figure}[H]
\begin{overpic}[scale=.7
  ]{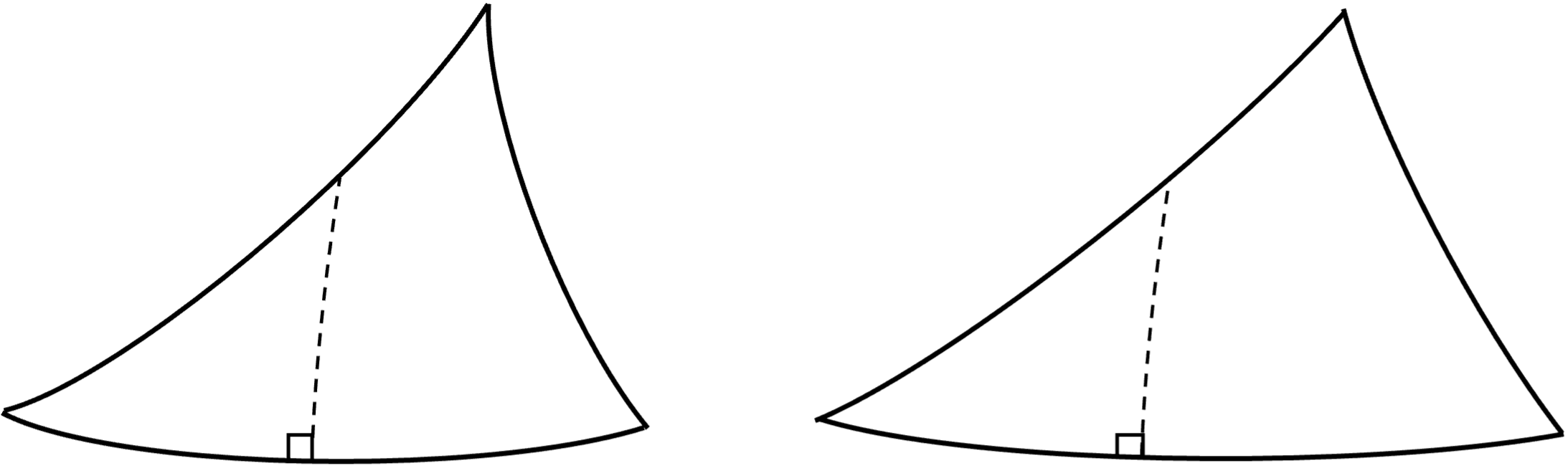}
      	\put(-1,0){$x$}	
      	\put(40,0){$\eta_t$}
	\put(32,30){$\xi_t$}
	\put(18.5,-3){$\hat{y}$}
	\put(21,0){$\bullet$}
	\put(21,-3){$y$}
	\put(20,20){$x'$}
	\put(51,0){$\bar{x}$}	
      	\put(98,-1){$\bar{\eta_t}$}
	\put(87,29){$\bar{\xi_t}$}
	\put(72,-2){$\bar{y}$}
	\put(73,20){$\bar{x}'$}
	\put(58,3.5){$\theta$}
\end{overpic}
\caption{Comparison triangles}
\label{figure1}
\end{figure}

We will use the following elementary inequality: for any $a,b>0$, we have $a\leq \frac{1}{2}e^b\frac{\sinh(a)}{\sinh(b)}$. Applying this to $a=d_1(\bar{x}',\bar{y})$ and $b=d_1(\bar{x},\bar{x}')$, we get
\begin{eqnarray*}
d_1(\bar{x}',\bar{y}) &\leq& \frac{1}{2}e^{d_1(\bar{x},\bar{x}')} \frac{\sinh(d_1(\bar{x}',\bar{y}))}{\sinh(d_1(\bar{x},\bar{x}'))}\\
&=& \frac{1}{2}e^{d_1(\bar{x},\bar{x}')}\sin(\theta)\\
&\leq& e^{d_1(\bar{x},\bar{x}')}\sin(\theta/2)\\
&=& e^{d_1(\bar{x},\bar{x}')} \frac{\sinh(\frac{1}{2}d_1(\bar{\xi_t},\bar{\eta_t}))}{\sinh(d_1(\bar{x},\bar{\eta_t}))}\\
&=& e^{d(x,x')}\frac{\sinh(\frac{1}{2}d(\xi_t,\eta_t))}{\sinh(t)}\\
&\leq& e^{d(x,x')} \frac{1}{1-e^{-2t}}e^{\frac{1}{2}d(\xi_t,\eta_t)-t}.
\end{eqnarray*}
Since $\frac{1}{2}d(\xi_t,\eta_t)-t=-\langle \xi_t,\eta_t\rangle_x$ goes to $-\langle \xi, \eta \rangle_x$ as $t\to\infty$, we obtain that
\[
d(x',\hat{y}) \leq 
d_1(\bar{x}',\bar{y}) \leq  e^{d(x,x')}d_x(\xi,\eta). 
\] 
Since $d_x(\xi,\eta)< re^{-d(x,x')}$, we conclude $d(x',\hat{y})< r$. In particular, the geodesic ray $[x,\eta)$ has non-empty intersection with the ball of radius $r$ centered at $x'$, meaning that $\eta\in \mathcal{O}_x(x',r)$.
\end{proof}

\begin{corollary}\label{cor:disjointballs} Let $r>0$ and $d>0$. There exists $q=q(r,d)>0$ such that if $x,y,z\in \widetilde{\M}$ satisfy $\ell \leq d(x,y), d(x,z) < \ell+d$, where $\ell>0$, and $d(y,z)>q$, then
\[
B_x(\xi_{x,y},re^{-d(x,y)}) \cap B_x(\xi_{x,z},re^{-d(x,z)}) = \emptyset.
\] 
\end{corollary}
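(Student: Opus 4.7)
The plan is to reduce everything to a statement about shadows via Lemma~\ref{lem:shadow2}. That lemma gives
\[
B_x(\xi_{x,y}, re^{-d(x,y)}) \subseteq \mathcal{O}_x(y,r) \quad\text{and}\quad B_x(\xi_{x,z}, re^{-d(x,z)}) \subseteq \mathcal{O}_x(z,r),
\]
so it suffices to find $q=q(r,d)$ such that $d(y,z) > q$ forces the two shadows $\mathcal{O}_x(y,r)$ and $\mathcal{O}_x(z,r)$ to be disjoint under the comparability hypothesis $\ell \le d(x,y), d(x,z) < \ell+d$.

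I would argue by contradiction: suppose $\eta \in \mathcal{O}_x(y,r) \cap \mathcal{O}_x(z,r)$. By definition of shadow, the geodesic ray $[x,\eta)$ meets both balls, so there exist $y' \in [x,\eta) \cap B(y,r)$ and $z' \in [x,\eta) \cap B(z,r)$. The key geometric input is that $y'$ and $z'$ lie on a single geodesic issuing from $x$, so
\[
d(y',z') = |d(x,y') - d(x,z')|.
\]
Then $d(x,y') \in [d(x,y) - r, d(x,y) + r]$ and similarly for $z'$, while $d(x,y)$ and $d(x,z)$ both lie in the interval $[\ell, \ell+d)$. Combining these bounds gives $|d(x,y') - d(x,z')| < d + 2r$, hence $d(y',z') < d + 2r$. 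Two applications of the triangle inequality through $y'$ and $z'$ then yield
\[
d(y,z) \le d(y,y') + d(y',z') + d(z',z) < r + (d+2r) + r = d + 4r.
\]

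Setting $q(r,d) := d + 4r$, any triple with $d(y,z) > q$ contradicts the above, so the shadows must be disjoint and the corollary follows. I do not expect a real obstacle here: the content is purely elementary metric geometry once Lemma~\ref{lem:shadow2} is in hand, and the mild subtlety is just remembering that the two intersection points with the balls live on the \emph{same} geodesic ray, which is what collapses the distance estimate to a one-dimensional computation.
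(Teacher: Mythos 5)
Your proposal is correct and follows essentially the same route as the paper: reduce to the shadows $\mathcal{O}_x(y,r)$ and $\mathcal{O}_x(z,r)$ via Lemma \ref{lem:shadow2}, then bound $d(y,z)$ by triangle inequalities along the common ray $[x,\eta)$, arriving at the same constant $q=4r+d$. The only cosmetic difference is that you use intersection points of the ray with the balls where the paper uses orthogonal projections of $y$ and $z$ onto the ray; the estimates are identical.
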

\begin{proof}
Suppose that $\eta\in B_x(\xi_{x,y},re^{-d(x,y)}) \cap B_x(\xi_{x,z},re^{-d(x,z)})$. By Lemma \ref{lem:shadow2}, it follows that $\eta\in \mathcal{O}_x(y,r)\cap \mathcal{O}_x(z,r)$. Let $\hat{y}$ and $\hat{z}$ denote the orthogonal projections of $y$ and $z$ onto $[x,\eta)$, respectively. Note that $d(y,\hat{y}), d(z,\hat{z})<r$. By the triangle inequality, 
\[
d(y,z) \leq d(y,\hat{y})+d(\hat{y},\hat{z})+d(\hat{z},z) < 2r+d(\hat{y},\hat{z}).
\] 
On the other hand, applying the triangle inequality again, we obtain
\[
|d(x,y)-d(x,\hat{y})|\leq r,\text{ and }|d(x,z)-d(x,\hat{z})|\leq r,
\]
and therefore 
\[
\ell-r\leq d(x,\hat{y}),d(x,\hat{z})< \ell+d+r.
\]
It follows that $d(\hat{y},\hat{z})<2r+d$. Consequently, we have that $d(y,z)<4r+d$. It suffices to consider $q(r,d)=4r+d$.
\end{proof}

We conclude this subsection with two useful results.

\begin{lemma}\label{lem:upa0} Let \( x \in \widetilde{\mathcal{M}} \) and \( Q > 0 \). Let \( \phi_1 : [0, L_1] \to \widetilde{\mathcal{M}} \) and \( \phi_2 : [0, L_2] \to \widetilde{\mathcal{M}} \) be arc-length parametrizations of geodesic segments such that \( \phi_1(0) = \phi_2(0) = x \) and \( d(\phi_1(L_1),\phi_2(L_2)) \leq Q \). Then, for every \( t <L_1 - 2Q - \log(2) \), we have $d(\phi_1(t), \phi_2(t)) \leq \frac{1}{2}.$
\end{lemma}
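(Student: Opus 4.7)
The plan is to reduce to a hyperbolic comparison computation via the CAT(-1) property. I would first form the comparison triangle $\bar\triangle(\bar x,\bar p_1,\bar p_2)\subset\mathbb{D}$ for $\triangle(x,\phi_1(L_1),\phi_2(L_2))$, with the comparison arc-length parametrizations $\bar\phi_1,\bar\phi_2$ of the two sides issuing from $\bar x$. CAT(-1) gives $d(\phi_1(t),\phi_2(t))\le d_1(\bar\phi_1(t),\bar\phi_2(t))$, so it is enough to control the hyperbolic distance. Note also that $|L_1-L_2|\le d(\phi_1(L_1),\phi_2(L_2))\le Q$, so in our range $t<L_1-2Q-\log 2$ we also have $t<L_2$, which is needed below.

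Next I would run two hyperbolic trigonometry computations. Let $\theta$ be the angle at $\bar x$ and write $Q'=d(\phi_1(L_1),\phi_2(L_2))\le Q$. The hyperbolic law of cosines rearranges to
\[
2\sinh L_1\sinh L_2 \sin^2(\theta/2)=\cosh Q'-\cosh(L_1-L_2)\le \cosh Q-1 = 2\sinh^2(Q/2),
\]
so $\sin(\theta/2)\le \sinh(Q/2)/\sqrt{\sinh L_1\sinh L_2}$. Applied to the points $\bar\phi_1(t),\bar\phi_2(t)$, both at distance $t$ from $\bar x$ with angle $\theta$ between them, the same law of cosines gives the clean identity $\sinh(d_1(\bar\phi_1(t),\bar\phi_2(t))/2)=\sinh t\,\sin(\theta/2)$.

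Combining the two and using the elementary bound $\sinh t\le \sinh L_i/\cosh(L_i-t)$ coming from the addition formula $\sinh L_i=\sinh t\cosh(L_i-t)+\cosh t\sinh(L_i-t)$, together with $\cosh x\ge e^x/2$ and $L_1+L_2\ge 2L_1-Q$, I get
\[
\sinh\!\bigl(d_1(\bar\phi_1(t),\bar\phi_2(t))/2\bigr)\le \frac{\sinh(Q/2)}{\sqrt{\cosh(L_1-t)\cosh(L_2-t)}}\le 2\sinh(Q/2)\,e^{Q/2}e^{-(L_1-t)}=(e^Q-1)\,e^{-(L_1-t)}.
\]
For $t<L_1-2Q-\log 2$ this is strictly less than $(e^Q-1)/(2e^{2Q})=\tfrac12(e^{-Q}-e^{-2Q})$, a function of $Q$ whose maximum on $[0,\infty)$ is $1/8$, attained at $Q=\log 2$. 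Thus $\sinh(d_1/2)<1/8<\sinh(1/4)$, so $d_1<1/2$, giving the claim via CAT(-1). The only mildly fiddly step is the bookkeeping of the constants: the $2Q+\log 2$ threshold is tuned precisely so that the quantity $(e^Q-1)e^{-(2Q+\log 2)}$ stays below $\sinh(1/4)$ uniformly in $Q\ge 0$.
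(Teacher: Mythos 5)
Your proof is correct and follows essentially the same route as the paper: reduce to $\mathbb{D}$ via a CAT($-1$) comparison triangle and then use hyperbolic trigonometry (law of cosines) to show the two rays from the common vertex stay exponentially close until within roughly $2Q+\log 2$ of the far endpoints. The only difference is organizational—the paper truncates both segments at $L=\min\{L_1,L_2\}$ and bounds the gap there by $2Q$, while you compare against the full triangle $(x,\phi_1(L_1),\phi_2(L_2))$ via the half-angle identity $\sinh(d_1/2)=\sinh t\,\sin(\theta/2)$, which if anything gives cleaner constant bookkeeping.
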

\begin{proof} Set \(L = \min\{L_1, L_2\} \) and observe that \( d(\phi_1(L), \phi_2(L)) \leq 2Q \). Consider the comparison triangle in \( \mathbb{D} \) with vertices \(\bar{x}\), \(\bar{y}_1\), and \(\bar{y}_2\), such that \( d(x, \phi_1(L)) = d_1(\bar{x}, \bar{y}_1) \), \( d(x, \phi_2(L)) = d_1(\bar{x}, \bar{y}_2) \), and $d(\phi_1(L), \phi_2(L))=d_1(\bar{y}_1,\bar{y}_2)$ where \( d_1 \) denotes the hyperbolic distance. Since \( \widetilde{\mathcal{M}} \) is a CAT(-1) space, it follows that 
$d(\phi_1(t), \phi_2(t)) \leq d_1(y_1(t), y_2(t)),$
where \( y_i(t) \) represents the point on the geodesic segment \([ \bar{x}, \bar{z}_i ]\) at hyperbolic distance \( t \) from \(\bar{x}\). Given that \( d_1(y_1(L), y_2(L)) \leq Q \), using the hyperbolic law of cosines, we deduce that
$d_1(y_1(t), y_2(t)) \leq e^Qe^{t-L}$. Indeed, note that 
\[
\cosh^2(d_1(y_1(t), y_2(t))) = \cosh^2(t)-\sinh^2(t)\cos(\theta),
\]
where $\theta>0$ denotes the interior angle at $\bar{x}$ of the comparison triangle. On the other hand, we have
\[
\cosh^2(Q) \geq \cosh^2(L)-\sinh^2(L)\cos(\theta),
\]  
which implies $\cos(\theta)\geq (\cosh^2(L)-\cosh^2(Q))/\sinh^2(L)$. In particular
\[
\cosh^2(d_1(y_1(t), y_2(t))) \leq 1 + \frac{\sinh^2(t)\sinh^2(Q)}{\sinh^2(L)},
\]
so
\begin{eqnarray*}
d_1(y_1(t), y_2(t))^2 &\leq& \cosh^2(d_1(y_1(t), y_2(t)))-1\\
&\leq& \frac{\sinh^2(t)\sinh^2(Q)}{\sinh^2(L)}\\
&\leq& (e^Qe^{t-L})^2. 
\end{eqnarray*}
Consequently,
$$d(\phi_1(t), \phi_2(t)) \leq e^Qe^{t-L},$$
for every \( t \leq L \). Finally, note that if \( t < L_1 - 2Q - \log(2) \), then \( t < L-Q-\log(2) \), which implies $d(\phi_1(t), \phi_2(t)) \leq \frac{1}{2}.$
\end{proof}

\begin{lemma}\label{lem:upa} Let \( Q > 3 \) and suppose \( L_1, L_2 > 6Q \). Let \( \phi_1 : [0, L_1] \to \widetilde{\mathcal{M}} \) and \( \phi_2 : [0, L_2] \to \widetilde{\mathcal{M}} \) be arc-length parametrizations of geodesic segments such that \( d(\phi_1(0), \phi_2(0)) \leq Q \) and \( d(\phi_1(L_1), \phi_2(L_2)) \leq Q \). Then, for every \( t \in [3Q, L_1 - 3Q] \), there exists \( s \in [2Q, L_1 - 2Q] \) such that \( d(\phi_1(t), \phi_2(s)) \leq 1 \).
\end{lemma}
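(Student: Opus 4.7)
The plan is to introduce an auxiliary geodesic \(\psi\) from \(\phi_1(0)\) to \(\phi_2(L_2)\) and invoke Lemma \ref{lem:upa0} twice, once to compare \(\phi_1\) with \(\psi\) and once for the time-reversed pair \((\phi_2,\psi)\). Let \(\psi:[0,L]\to\widetilde{\M}\) be the arc-length parametrization of the geodesic segment from \(\phi_1(0)\) to \(\phi_2(L_2)\), where \(L=d(\phi_1(0),\phi_2(L_2))\). The two hypotheses, combined with the triangle inequality, give \(|L-L_1|\le Q\) and \(|L-L_2|\le Q\).

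First, I would apply Lemma \ref{lem:upa0} to \(\phi_1\) and \(\psi\), both emanating from \(\phi_1(0)\), whose terminal endpoints \(\phi_1(L_1)\) and \(\psi(L)=\phi_2(L_2)\) satisfy \(d(\phi_1(L_1),\psi(L))\le Q\). This yields
\[
d(\phi_1(t),\psi(t))\le \tfrac{1}{2}\quad\text{for every }t\le L_1-2Q-\log 2.
\]
Next, I would reverse both \(\phi_2\) and \(\psi\), setting \(\tilde{\phi}_2(u):=\phi_2(L_2-u)\) and \(\tilde{\psi}(u):=\psi(L-u)\). These share the initial point \(\phi_2(L_2)\), and their terminal endpoints \(\phi_2(0)\) and \(\phi_1(0)\) lie within \(Q\). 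Since Lemma \ref{lem:upa0} is symmetric in the two geodesics, applying it with either one playing the role of the first geodesic and keeping the more permissive threshold yields
\[
d(\psi(L-u),\phi_2(L_2-u))\le \tfrac{1}{2}\quad\text{for every }u\le \max\{L,L_2\}-2Q-\log 2.
\]

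Given \(t\in[3Q,L_1-3Q]\), I would set \(s:=L_2-L+t\) and \(u:=L-t\). The first threshold \(t\le L_1-2Q-\log 2\) follows from \(t\le L_1-3Q\) and \(Q>\log 2\) (a consequence of \(Q>3\)). The second, rewritten as \(t\ge L-\max\{L,L_2\}+2Q+\log 2\), is satisfied because \(L-\max\{L,L_2\}\le 0\) reduces it to \(t\ge 2Q+\log 2\), which is implied by \(t\ge 3Q\). The triangle inequality then gives
\[
d(\phi_1(t),\phi_2(s))\le d(\phi_1(t),\psi(t))+d(\psi(t),\phi_2(s))\le 1,
\]
while \(|s-t|=|L_2-L|\le Q\) places \(s\) inside \([2Q,L_1-2Q]\).

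The main obstacle is a careful accounting of the threshold constants so that both applications of Lemma \ref{lem:upa0} remain valid throughout the required interval and the auxiliary parameter \(s\) lands in the correct target range. The assumptions \(Q>3\) and \(L_1,L_2>6Q\) are used precisely to absorb the \(\log 2\) correction inherent in Lemma \ref{lem:upa0} and to ensure the target interval \([3Q,L_1-3Q]\) is non-empty.
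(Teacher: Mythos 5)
Your proof is correct and follows essentially the same route as the paper's: introduce the auxiliary geodesic from $\phi_1(0)$ to $\phi_2(L_2)$, compare $\phi_1$ with it via Lemma \ref{lem:upa0}, then reverse orientations and compare $\phi_2$ with the reversed auxiliary geodesic, choosing $s=L_2-L+t$. Your explicit use of the symmetrized threshold $\max\{L,L_2\}-2Q-\log 2$ in the second application is a slightly more careful bookkeeping than the paper's (which uses $L_2-2Q-\log 2$ and thereby implicitly assumes the favorable ordering), but it is the same argument.
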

\begin{proof} Set $L_3=d(\phi_1(0), \phi_2(L_2))$ and observe that $|L_1-L_3|\le Q$ and $|L_3-L_2|\le Q$. Let $\phi_3:[0,L_3]\to\widetilde{\M}$ be an arc-length parametrization of the geodesic segment $[\phi_1(0),\phi_2(L_2)]$, with $\phi_3(0)=\phi_1(0)$. Define $\bar{\phi}_3:[0,L_3]\to\widetilde{\M}$, given by $\bar{\phi}_3(t)=\phi_3(L_3-t)$, and $\bar{\phi}_2:[0,L_2]\to\widetilde{\M}$, given by $\bar{\phi}_2(t)=\phi_2(L_2-t)$. Using Lemma \ref{lem:upa0} to the geodesics $\phi_1$ and $\phi_3$, we conclude that if $t<L_1-2Q-\log(2)$, then $d(\phi_1(t),\phi_3(t))\le \frac{1}{2}$.  Similarly, applying Lemma \ref{lem:upa0} to $\bar{\phi}_2$ and $\bar{\phi}_3$, we obtain that if $s<L_2-2Q-\log(2)$, then $$d(\phi_2(L_2-s),\phi_3(L_3-s))=d(\bar{\phi}_2(s),\bar{\phi}_3(s))\le \frac{1}{2}.$$
Let $t_0\in [3Q,L_1-3Q]$. Since $t_0 <L_1-2Q-\log(2)$, it follows that $d(\phi_1(t_0),\phi_3(t_0))\le \frac{1}{2}$. Set $s_0=L_3-t_0$ and $t_1=L_2-s_0$. Note that $s_0\le L_3-3Q<L_2-2Q-\log(2)$, and therefore $d(\phi_2(t_1),\phi_3(t_0))=d(\bar{\phi}_2(s_0),\bar{\phi}_3(s_0))\le \frac{1}{2}.$
 Observe that $2Q<t_1\le L_1-2Q$. Thus, we conclude that $d(\phi_1(t_0),\phi_2(t_1))\le 1$, with $t_1\in [2Q, L_1 - 2Q]$.
\end{proof}

\subsection{The geodesic flow} Let $\Gamma\leqslant \text{Iso}(\widetilde{\M})$ be a discrete and torsion free subgroup of isometries.  Let $\M=\widetilde{\M}/\Gamma$ be the quotient manifold. The quotient maps $\widetilde{\M}\to\M$ and $T^1\widetilde{\M}\to T^1\M$ are both denoted by $p_\Gamma$. 


A vector $v\in T^1\M$ defines a unique geodesic $\alpha_v:[0,\infty)\to\M$, where $v=(\alpha_v(0),\alpha_v'(0))$. The \emph{geodesic flow} on $\M$, denoted by $(g_t)_{t\in\R}$, is defined by the maps $g_t:T^1\M\to T^1\M$, where $g_t(v)=(\alpha_v(t),\alpha_v'(t))$; $g_t$ represents  the time $t$ map of the geodesic flow. A vector $v\in T^1\M$ is said to be {non-wandering} if for any neighborhood $U$ of $v$ and any $N>0$, there exists $t>N$ such that $g_{-t}(U)\cap U\neq \emptyset$. The set of all non-wandering vectors is called the \emph{non-wandering set} of the geodesic flow, denoted by $\Omega$. This set is closed and invariant under the geodesic flow, meaning that if $v\in \Omega$, then $g_t(v)\in \Omega$ for all $t\in \R$. 

Let $M(T^1\M,g)$ be the space of  geodesic flow invariant probability measures on $T^1\M$. More precisely, $\mu\in M(T^1\M,g)$ if it is a Borel probability measure such that $\mu(A)=\mu(g_t(A))$ for all $t\in\R$. By the Poincar\'e recurrence theorem, it follows that if $\mu\in M(T^1\M,g)$, then $\mu(\Omega)=1$. In other words, the support of any invariant probability measure is contained in $\Omega$. Let $h(\mu)$ denote the measure-theoretic entropy of $\mu\in M(T^1\M,g)$, which is defined as the entropy of the time one map $g_1$ (see \cite{wa}). In Definition \ref{def:ent}, we provide a definition of $h(\mu)$ that includes infinite measures and coincides with the usual definition of  measure-theoretic entropy for probability measures using partitions. The \emph{topological entropy} of the geodesic flow is defined as $$h_{top}(g)=\sup_{\mu\in M(T^1\M,g)} h(\mu).$$

We say $\M$ is \emph{convex cocompact} if $\Omega$ is compact. We say that $\M$ is \emph{geometrically finite} if an $\epsilon$-neighborhood of $\Omega$ has finite Liouville volume (for a detailed discussion on the notion of geometric finiteness, see \cite{Bow}).

\subsection{Limit sets}
Let $o\in\widetilde{\M}$. The \emph{limit set} of $\Gamma$, denoted by $\Lambda_\Gamma$, is the set of accumulation points of a $\Gamma-$orbit on $\partial_\infty\widetilde{\M}$. More precisely, 
\[
\Lambda_\Gamma=\overline{\Gamma\cdot o}\setminus \Gamma\cdot o.
\]
The limit set is independent of the reference point $o\in\widetilde{\M}$. Observe that  $\xi\in\Lambda_\Gamma$ if there exists a sequence $(\gamma_n)_ n$ in $\Gamma$ and a point $x\in \widetilde{\M}$ such that $(\gamma_nx)_n$ converges to $\xi$ in $\overline{\M}$. 

We say that $\Gamma$ is \emph{non-elementary} if $\Lambda_\Gamma$ is an infinite set. We say that $\M$ is non-elementary if $\Gamma$ is non-elementary.

A point $\xi\in \Lambda_\Gamma$ is called a {radial limit point} if there exists an $R$-neighbor\-hood of the geodesic ray $[o,\xi)$ that contains infinitely many elements of $\Gamma\cdot o$ for some $R>0$. The \emph{radial limit set} of $\Gamma$, denoted by $\Lambda_\Gamma^{rad}$, is the collection of all radial limit points of $\Gamma$. Equivalently, $\xi\in \Lambda_\Gamma^{rad}$ if and only if the projection  to $\M$ of the geodesic ray $[o,\xi)$ returns infinitely many times to the ball in $\M$ centered at $p_\Gamma(o)$ of radius $R$ for some $R>0$.

Let $z\in \widetilde{\M}$. In the introduction, we defined $\Re(z)$ as the set of vectors $v\in T^1_z\widetilde{\M}$ for which $p_\Gamma(v)$ is recurrent (see Definition \ref{def:rec}). It follows from the definitions that $\Lambda_\Gamma^{rad}=\Theta_z(\mathcal{R}(z))$. Similarly, in the introduction, we defined a subset of the radial limit set, called the \emph{diverging on average radial limit set} of $\Gamma$, which is given by
$$\Lambda_\Gamma^{\infty, rad}=\Theta_z(\mathcal{RDA}(z)),$$
where $\mathcal{RDA}(z)$ is the set of vectors $v\in T^1_z\widetilde{\M}$ such that $p_\Gamma(v)$ is recurrent and diverges on average. Equivalently, $\xi\in \Lambda_\Gamma^{\infty, rad}$ if $\xi\in \Lambda_\Gamma^{rad}$ and 
\[
\lim_{T\to\infty}\frac{1}{T}\int_0^T \chi_{p_\Gamma^{-1}(K)}(g_t v_\xi)dt = 0,
\]
for every compact set $K\subseteq T^1\M$, where $v_\xi\in T^1\widetilde{\M}$ is a vector whose associated geodesic ray converges to $\xi$.

\subsection{Critical exponents and entropy} 
The \emph{critical exponent} of $\Gamma$ is defined by  
\[
\delta_\Gamma = \limsup_{R\to\infty} \frac{\log(\#\{\gamma\in \Gamma : d(o,\gamma \cdot o)\leq R\})}{R}.
\]
The critical exponent does not depend on the reference point $o\in\widetilde{\M}$. Moreover, the critical exponent of $\Gamma$ is the exponent of convergence of the Poincar\'e series 
\[
P_\Gamma(s)=\sum_{\gamma\in \Gamma} e^{-sd(o,\gamma\cdot o)}.
\]
Indeed, the Poincar\'e series converges for $s>\delta_\Gamma$ and diverges for $s<\delta_\Gamma$. 

The following two theorems highlight  the strong interconnection among the Hausdorff dimension of the radial limit set, the topological entropy of the geodesic flow, and the critical exponent of the group. Sullivan \cite{su} initially proved the first result for geometrically finite groups. Later, Bishop and Jones \cite{bj} extended this result to arbitrary Kleinian groups acting on $\H^3$. Finally, Paulin \cite{pau} further extended it to include Hadamard manifolds of any dimension. The second result, originally established by Sullivan \cite{su} for convex cocompact manifolds, was generalized to arbitrary manifolds by Otal and Peign\'e \cite{op}.

\begin{theorem}[{\cite[Theorem 2.1]{pau}}]\label{thm:bj} Let $\M=\widetilde{\M}/\Gamma$ be a non-elementary pinched negatively curved Riemannian manifold. Then,
\[
\emph{HD}(\Lambda^{rad}_\Gamma)=\delta_\Gamma,
\]
where the Hausdorff dimension is computed using a Gromov-Bourdon visual metric on $\partial_\infty\widetilde{\M}$. 
\end{theorem}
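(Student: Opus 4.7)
The plan is to prove the inequalities $\emph{HD}(\Lambda_\Gamma^{rad}) \leq \delta_\Gamma$ and $\emph{HD}(\Lambda_\Gamma^{rad}) \geq \delta_\Gamma$ separately, using the Kaimanovich shadow estimates (\ref{lem:shadow}) and a Patterson--Sullivan conformal density of dimension $\delta_\Gamma$ to translate the distribution of the orbit $\Gamma \cdot o$ in $\widetilde{\M}$ into the visual geometry on $\partial_\infty \widetilde{\M}$.

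For the upper bound I would use a direct covering argument. For each $R>0$, let $\Lambda_\Gamma^{rad,R}$ denote the set of $\xi \in \Lambda_\Gamma^{rad}$ whose ray $[o,\xi)$ enters the $R$-ball around $\gamma\cdot o$ for infinitely many $\gamma\in\Gamma$; then $\Lambda_\Gamma^{rad} = \bigcup_{R\in\N} \Lambda_\Gamma^{rad,R}$, so it suffices to bound each piece. For every $N>0$ one has
$$\Lambda_\Gamma^{rad,R} \subseteq \bigcup_{\gamma \in \Gamma:\, d(o,\gamma o) \geq N} \mathcal{O}_o(\gamma o, R),$$
and by the right-hand inclusion of (\ref{lem:shadow}) each shadow has $d_o$-diameter at most $2c_0(R) e^{-d(o,\gamma o)}$. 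Hence for any $s>\delta_\Gamma$,
$$\mathcal{H}^s(\Lambda_\Gamma^{rad,R}) \leq (2c_0(R))^s \sum_{\gamma:\, d(o,\gamma o)\geq N} e^{-s d(o,\gamma o)} \xrightarrow[N\to\infty]{} 0,$$
since the Poincaré series $P_\Gamma(s)$ converges. Letting $s\downarrow\delta_\Gamma$ and taking a countable union over $R$ yields $\emph{HD}(\Lambda_\Gamma^{rad})\leq \delta_\Gamma$.

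For the lower bound I would construct a Patterson--Sullivan conformal density $(\mu_x)_{x\in\widetilde{\M}}$ of exponent $\delta_\Gamma$ as a weak-$*$ accumulation point of $P_\Gamma(s)^{-1}\sum_\gamma e^{-s d(o,\gamma o)}\delta_{\gamma o}$ as $s\downarrow\delta_\Gamma$, invoking Patterson's slow-growth modification when $P_\Gamma(\delta_\Gamma)<\infty$. Sullivan's shadow lemma in pinched variable curvature then provides constants $R_0,C>0$ with
$$C^{-1} e^{-\delta_\Gamma d(o,\gamma o)} \leq \mu_o(\mathcal{O}_o(\gamma o, R_0)) \leq C\, e^{-\delta_\Gamma d(o,\gamma o)} \quad \text{for all } \gamma\in\Gamma.$$
The divergence half of the Hopf--Tsuji--Sullivan dichotomy gives $\mu_o(\Lambda_\Gamma^{rad})>0$. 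Combining the upper shadow estimate with Lemma \ref{lem:shadow2} produces the Frostman-type bound $\mu_o(B_o(\xi,r))\leq C' r^{\delta_\Gamma}$ for every $\xi\in \Lambda_\Gamma^{rad}$ and $r$ small; the mass distribution principle applied to $\mu_o|_{\Lambda_\Gamma^{rad}}$ then delivers $\emph{HD}(\Lambda_\Gamma^{rad}) \geq \delta_\Gamma$.

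The main obstacle is the lower bound, and specifically showing $\mu_o(\Lambda_\Gamma^{rad})>0$. In variable curvature Sullivan's original hyperbolic tools are not directly available, so one must carefully arrange the conformal density to avoid atoms at parabolic fixed points, prove a shadow lemma with constants depending only on the pinching, and run a Borel--Cantelli argument at the critical exponent; once these ingredients are in place the Frostman step is essentially formal.
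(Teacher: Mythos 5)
First, note that the paper itself does not prove this statement: it is quoted with a citation to Paulin (and Bishop--Jones), so the comparison here is against the known proof of that theorem and against the closely analogous lower-bound argument the paper runs for Theorem \ref{main1} in Section \ref{part2}. Your upper bound is fine and standard: covering $\Lambda_\Gamma^{rad,R}$ by the shadows $\mathcal{O}_o(\gamma\cdot o,R)$ with $d(o,\gamma\cdot o)\ge N$ and using (\ref{lem:shadow}) together with convergence of $P_\Gamma(s)$ for $s>\delta_\Gamma$ gives $\mathrm{HD}(\Lambda_\Gamma^{rad})\le\delta_\Gamma$.

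The genuine gap is in the lower bound. You construct a $\delta_\Gamma$-conformal Patterson--Sullivan density, explicitly allowing for the case $P_\Gamma(\delta_\Gamma)<\infty$ via Patterson's modification, and then assert that ``the divergence half of the Hopf--Tsuji--Sullivan dichotomy gives $\mu_o(\Lambda_\Gamma^{rad})>0$''. But the HTS dichotomy refers to the actual Poincar\'e series, not the modified one: if $\Gamma$ is of convergence type ($P_\Gamma(\delta_\Gamma)<\infty$), then \emph{every} $\delta_\Gamma$-conformal density gives zero mass to the radial limit set, so your measure-theoretic argument produces nothing in exactly the case you set it up to handle. Since the theorem is claimed for all non-elementary groups (including geometrically infinite, convergence-type ones), this is not a removable technicality; it is the reason Bishop--Jones and Paulin do not use the Patterson--Sullivan measure at all for the lower bound, but instead, for each $s<\delta_\Gamma$, build a tree of orbit points and a Cantor subset of $\Lambda_\Gamma^{rad}$ carrying a hand-made measure with a Frostman bound of exponent $s$ --- precisely the strategy the paper itself follows in Section \ref{part2} for the diverging-on-average set. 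A secondary, smaller issue: even for divergence-type groups your claimed uniform Frostman bound $\mu_o(B_o(\xi,r))\le C'r^{\delta_\Gamma}$ at \emph{every} radial point and every small $r$ is false in general (e.g.\ geometrically finite groups with a cusp of rank $k>\delta_\Gamma$, where deep cusp excursions inflate the measure of balls, as in the Stratmann--Velani global measure formula); the shadow lemma only controls balls at the scales at which $[o,\xi)$ returns near the orbit. That part could be repaired by replacing the mass distribution principle with the almost-everywhere $\liminf$ criterion of Proposition \ref{prop:ledhd}, but the convergence-type case cannot be repaired within your framework.
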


\begin{theorem}[{\cite[Th\'eor\`eme 1]{op}}]\label{op} Let $\M=\widetilde{\M}/\Gamma$ be a non-elementary pinched negatively curved Riemannian manifold. Then,  $h_{top}(g)=\delta_\Gamma$.
\end{theorem}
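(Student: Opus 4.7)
The plan is to prove $h_{top}(g) = \delta_\Gamma$ by establishing the two inequalities separately: the lower bound $h_{top}(g) \ge \delta_\Gamma$ via approximation by convex cocompact subgroups (reducing to Sullivan's theorem for the compact case), and the upper bound $h_{top}(g) \le \delta_\Gamma$ via the Brin--Katok formula combined with a shadow-covering count based on the Poincar\'e series.

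For the lower bound, I would construct an increasing sequence of Schottky subgroups $\Gamma_n \le \Gamma$, each free and convex cocompact, obtained via the ping-pong lemma applied to high powers of suitably chosen pairs of hyperbolic elements of $\Gamma$ whose axes have four distinct endpoints on $\Lambda_\Gamma$. A direct Poincar\'e-series estimate using the CAT($-1$) geometry of $\widetilde{\M}$ shows that the critical exponents satisfy $\delta_{\Gamma_n} \to \delta_\Gamma$. For each $\Gamma_n$, the non-wandering set $\Omega_n$ of the geodesic flow on $\widetilde{\M}/\Gamma_n$ is compact and uniformly hyperbolic, and Sullivan's original theorem (together with the classical variational principle on the compact set $\Omega_n$) yields $h_{top}(g|_{\Omega_n}) = \delta_{\Gamma_n}$. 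The natural covering map $\widetilde{\M}/\Gamma_n \to \widetilde{\M}/\Gamma = \M$ pushes forward any $g$-invariant probability measure without changing its entropy, so the variational principle gives $h_{top}(g) \ge h_{top}(g|_{\Omega_n}) = \delta_{\Gamma_n}$, and taking $n \to \infty$ yields the desired inequality.

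For the upper bound, by the variational principle it suffices to show $h(\mu) \le \delta_\Gamma$ for every $\mu \in M(T^1\M, g)$. I would apply the Brin--Katok formula to express $h(\mu)$ in terms of the exponential decay rate of the $\mu$-measure of Bowen balls $B_T(v, r)$. Lifting to $T^1\widetilde{\M}$ and fixing a reference point $z \in \widetilde{\M}$, two geodesic rays based near $z$ that remain Bowen-close up to time $T$ must end within a ball of bounded radius around a common point; the shadow from $z$ of such a ball is, by the shadow estimate \eqref{lem:shadow} together with Lemma \ref{lem:shadow2}, a visual ball of radius $\asymp e^{-T}$ in $\partial_\infty\widetilde{\M}$. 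The convergence of $P_\Gamma(s)$ for $s > \delta_\Gamma$ implies that the number of translates $\gamma z$ with $d(z, \gamma z)$ in a unit interval around $T$ is at most $e^{(\delta_\Gamma + \varepsilon)T}$ for $T$ large, so the push-forward of $\mu$ to the unit tangent sphere at $z$ can be covered by this many visual balls of radius $\asymp e^{-T}$. A pigeonhole argument then forces $\mu(B_T(v,r)) \gtrsim e^{-(\delta_\Gamma + \varepsilon)T}$ along a $\mu$-full-measure set, and Brin--Katok yields $h(\mu) \le \delta_\Gamma + \varepsilon$; letting $\varepsilon \to 0$ completes the argument.

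The main obstacle, in my view, is the Schottky approximation step in the lower bound: showing $\delta_{\Gamma_n} \to \delta_\Gamma$ for explicit convex cocompact subgroups is a nontrivial estimate in variable pinched negative curvature, requiring quasi-additivity of Busemann cocycles along ping-pong words together with careful Poincar\'e-series manipulations to prevent losses from the non-constant curvature. The upper-bound argument is more mechanical once the shadow estimates are in hand, but the non-compactness of $T^1\M$ requires some care: a generic trajectory may leave every compact set, so one restricts to a sequence of compact sets of $\mu$-measure close to $1$ and uses flow invariance of $\mu$ together with Birkhoff's theorem to guarantee that generic vectors return near a lift of the base point $z$ often enough for the shadow-covering count to apply uniformly.
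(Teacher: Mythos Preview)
The paper does not prove this theorem; it is stated in the preliminaries as a result of Otal and Peign\'e \cite{op} and used as a black box. There is therefore no proof in the paper to compare your proposal against.

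That said, your outline is a plausible route to the result, though it differs from Otal--Peign\'e's original argument. For the lower bound they do not approximate by Schottky subgroups; instead they construct directly an invariant probability measure of entropy $\delta_\Gamma$ from Patterson--Sullivan conformal densities, treating separately the cases where the Bowen--Margulis--Sullivan measure is finite or infinite. Your Schottky approach is valid in principle, but as you yourself flag, the step $\delta_{\Gamma_n}\to\delta_\Gamma$ is essentially the hard content of the Bishop--Jones/Paulin theorem (Theorem~\ref{thm:bj} here), so you are trading one deep input for another of comparable strength rather than giving an independent argument. Your upper-bound sketch via Brin--Katok and shadow covers is close in spirit to the standard argument; the pigeonhole step as written is slightly loose (covering by $N$ Bowen balls gives a lower bound on \emph{some} ball's measure, not on the ball around a generic point), but the correct version---bounding the $\mu$-measure of the set where $\mu(B_T(v,r))<e^{-(\delta_\Gamma+\varepsilon)T}$ by the number of covering balls times the threshold and then invoking Borel--Cantelli---is routine once the recurrence issue in the non-compact setting is handled as you indicate.
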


In analogy to the critical exponent of $\Gamma$ we now define the critical exponent outside a compact set of $\M$, which quantifies the complexity of the ends of the manifold. Let $K\subseteq \M$ be a compact, pathwise-connected set that is the closure of its interior and has a piecewise $C^1$ boundary. A \emph{nice preimage} of $K$ is a compact set $\widetilde{K}\subseteq\widetilde{\M}$ with a piecewise $C^1$ boundary such that $p_\Gamma(\widetilde{K})=K$ and the restriction of $p_\Gamma$ to the interior of $\widetilde{K}$ is injective. The next lemma provides useful information about nice preimages.

\begin{lemma}[{\cite[Lemma 7.5]{st}}]\label{lem:nicepreimage} Let $K\subseteq \M$ be a compact pathwise connected set with piecewise $C^1$ boundary which is the closure of its interior. Then,
\begin{enumerate}
    \item A nice preimage $\widetilde{K}$ of $K$ always exists,
    \item If $\gamma\neq\emph{Id}$, then $\gamma\cdot\emph{int}(\widetilde{K})\cap\emph{int}(\widetilde{K})=\emptyset$,
    \item The set $\{\gamma\in\Gamma : \gamma\cdot\widetilde{K}\cap\widetilde{K}\neq\emptyset\}$ is finite, and
    \item If $K_1\subseteq K_2$ are compact sets as above, then they admit nice preimages $\widetilde{K}_1\subseteq\widetilde{K}_2$.
\end{enumerate}
\end{lemma}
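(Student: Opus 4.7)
The plan is to construct $\widetilde{K}$ explicitly as the closure of the intersection of $p_\Gamma^{-1}(K)$ with a carefully chosen Dirichlet fundamental domain, and then verify the four claimed properties in turn. I would fix a basepoint $z\in\widetilde{\M}$ and let
$$\mathcal{F}=\{y\in\widetilde{\M}:d(z,y)<d(z,\gamma z)\text{ for all }\gamma\in\Gamma\setminus\{\text{Id}\}\}$$
denote the Dirichlet fundamental domain centered at $z$. Setting
$$\widetilde{K}:=\overline{\mathcal{F}\cap p_\Gamma^{-1}(K)},$$
I expect all four items to follow from standard properties of $\mathcal{F}$ together with the fact that $p_\Gamma$ is a local isometry and a covering map.

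For item (1), compactness of $\widetilde{K}$ would come from a boundedness argument: if $y\in\overline{\mathcal{F}}$ projects into $K$ and we choose $\gamma\in\Gamma$ so that $d(y,\gamma z)$ nearly realizes $d_\M(p_\Gamma(y),p_\Gamma(z))$, then the defining inequality of $\mathcal{F}$ yields a uniform upper bound on $d(z,y)$ in terms of $\text{diam}(K)$ and $d_\M(p_\Gamma(z),K)$. Surjectivity $p_\Gamma(\widetilde{K})=K$ uses that $K$ is the closure of its interior: interior points of $K$ lift into $\overline{\mathcal{F}}$, and any such lift lying on $\partial\mathcal{F}$ is still approximable by lifts in $\mathcal{F}\cap p_\Gamma^{-1}(K)$. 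Injectivity of $p_\Gamma$ on $\text{int}(\widetilde{K})$ is immediate from $\text{int}(\widetilde{K})\subseteq\mathcal{F}$, on which $p_\Gamma$ is already injective. For the piecewise $C^1$ boundary, I would note that $\partial\widetilde{K}\subseteq p_\Gamma^{-1}(\partial K)\cup\partial\mathcal{F}$: the first piece is piecewise $C^1$ because $\partial K$ is and $p_\Gamma$ is a local diffeomorphism, and the second piece is piecewise totally geodesic since each face of $\partial\mathcal{F}$ lies on a perpendicular bisector $\{y:d(z,y)=d(\gamma z,y)\}$.

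Items (2), (3), and (4) should then be short consequences. Item (2) is immediate since $\text{int}(\widetilde{K})\subseteq\mathcal{F}$ and $\gamma\mathcal{F}\cap\mathcal{F}=\emptyset$ for $\gamma\neq\text{Id}$. Item (3) follows from proper discontinuity of the $\Gamma$-action: $\widetilde{K}$ is compact, so only finitely many $\Gamma$-translates can meet it. For item (4), using the same Dirichlet domain $\mathcal{F}$ for both $K_1$ and $K_2$, the construction automatically gives $\widetilde{K}_1\subseteq\widetilde{K}_2$, since $\mathcal{F}\cap p_\Gamma^{-1}(K_1)\subseteq\mathcal{F}\cap p_\Gamma^{-1}(K_2)$ and closure preserves inclusions.

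The main obstacle I foresee is controlling the boundary structure in item (1). A priori, the two boundary contributions $p_\Gamma^{-1}(\partial K)$ and $\partial\mathcal{F}$ could meet in complicated, non-transverse ways, producing a boundary that is piecewise $C^1$ only in a degenerate sense. To handle this cleanly, I would likely need to perturb the basepoint $z$ slightly so that the perpendicular bisector hypersurfaces defining $\partial\mathcal{F}$ meet lifts of $\partial K$ transversally; a generic choice of $z$ should suffice. Alternatively, one could replace $\widetilde{K}$ by a slight modification obtained by rounding off the corners formed along $\partial\mathcal{F}\cap p_\Gamma^{-1}(\partial K)$ while remaining inside $\mathcal{F}$, without affecting the other three properties.
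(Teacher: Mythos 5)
The paper does not prove this lemma itself; it cites \cite[Lemma 7.5]{st} and uses the statement as a black box. Your Dirichlet--domain construction $\widetilde{K}=\overline{\mathcal{F}\cap p_\Gamma^{-1}(K)}$ is the standard route to such a result and is, in my judgement, essentially correct: the compactness bound $d(z,y)\le d_\M(p_\Gamma(z),K)+\operatorname{diam}(K)$ for $y\in\overline{\mathcal{F}}\cap p_\Gamma^{-1}(K)$ works; surjectivity $p_\Gamma(\widetilde{K})=K$ goes through using that $K$ is the closure of its interior together with a compactness/diagonal argument for boundary points of $K$; injectivity on $\operatorname{int}(\widetilde{K})$ reduces to $\operatorname{int}(\widetilde{K})\subseteq\mathcal{F}$; items (2)--(4) follow as you say, with (4) relying on using one fixed Dirichlet domain for both compacta.

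Two small points deserve a correction or a word more. First, in variable negative curvature the bisectors $\{y:d(z,y)=d(\gamma z,y)\}$ are \emph{not} totally geodesic; what is true (and what you actually need) is that they are $C^1$ hypersurfaces, because $f=d(z,\cdot)-d(\gamma z,\cdot)$ has non-vanishing gradient along $f^{-1}(0)$: the gradient is the difference of the two unit radial fields, which can only vanish where $y,z,\gamma z$ are collinear with $z,\gamma z$ on the same side, and such $y$ are never on the bisector. This same non-vanishing gradient is also what justifies $\operatorname{int}(\overline{\mathcal{F}})=\mathcal{F}$ (and hence $\operatorname{int}(\widetilde{K})\subseteq\mathcal{F}$) without invoking convexity of $\mathcal{F}$, which does \emph{not} hold in general since $\mathcal{F}$ is a sublevel set of differences of convex functions; you should not lean on convexity here. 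Second, your worry about $\partial\mathcal{F}$ meeting $p_\Gamma^{-1}(\partial K)$ non-transversally is legitimate but likely not fatal in the sense of ``piecewise $C^1$'' used in \cite{st}: by local finiteness of the Dirichlet tessellation, $\partial\widetilde{K}$ is covered by finitely many $C^1$ pieces from the two sources, and this weaker reading is all that is used downstream. Your proposed remedies (generic $z$, or rounding inside $\mathcal{F}$) are both workable if one insists on genuine stratification; for the generic-$z$ route, note that one must pick a single $z$ that works simultaneously for all the $K_n$'s used later, which is fine since a countable intersection of residual sets of basepoints is still residual.
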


In order to quantify the complexity of the geodesic flow outside compact sets we consider the following definition.

\begin{definition}\label{def:fundgpoutW} Let $K\subseteq \M$ be a compact set and  $\widetilde{K}$ a nice preimage of $K$. The \emph{fundamental group of $\M$ out of} $K$ is the set $\Gamma_{\widetilde{K}}$ of elements $\gamma\in\Gamma$ for which there exists $x,y\in \widetilde{K}$ such that the geodesic segment $[x,\gamma\cdot y]$  touches $p_\Gamma^{-1}(K)$ only at $\widetilde{K}$ and $\gamma\cdot\widetilde{K}$. In other words, such that
\[
[x,\gamma\cdot y]\cap p_\Gamma^{-1}(K) \subseteq \widetilde{K}\cup\gamma\cdot\widetilde{K}.
\]
\end{definition}

The definition of a fundamental group out of a compact set $K$ depends upon the choice of a nice preimage $\widetilde{K}$ of $K$. The following proposition clarifies the dependence of this definition on the choice of $\widetilde{K}$.  It is worth noting that $\Gamma_{\widetilde{K}}$ may not form a group in general.

\begin{proposition}[{\cite[Proposition 7.9 (1) and (3)]{st}}]\label{prop:ccecompacts} Let $K\subseteq \M$ be a compact pathwise connected set with piecewise $C^1$ boundary. 
\begin{enumerate}
    \item If $\gamma\in\Gamma$, then $\Gamma_{\gamma\cdot \widetilde{K}}=\gamma \Gamma_{\widetilde{K}}\gamma^{-1}$
    \item If $\widetilde{K}_1$ and $\widetilde{K}_2$ are nice preimages of $K$, then there exists a finite subset $\{\gamma_1,\ldots,\gamma_k\}$ of $\Gamma$, such that
    \[
    \Gamma_{\widetilde{K}_2}\subseteq\bigcup_{i,j=1}^k \gamma_i\Gamma_{\widetilde{K}_1}\gamma_j^{-1}.
    \]
    \end{enumerate}
\end{proposition}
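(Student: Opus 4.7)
The proof naturally splits into the two stated parts. For part (1), the plan is to translate the defining condition using the isometry $\gamma$ together with the $\Gamma$-invariance of $p_\Gamma^{-1}(K)$. If $\eta \in \Gamma_{\gamma \widetilde{K}}$, witnessed by points $x, y \in \gamma \widetilde{K}$, I write $x = \gamma a$ and $y = \gamma b$ with $a, b \in \widetilde{K}$, and apply $\gamma^{-1}$ to the inclusion $[x, \eta y] \cap p_\Gamma^{-1}(K) \subseteq \gamma \widetilde{K} \cup \eta\gamma \widetilde{K}$. Because $\gamma^{-1}$ is an isometry and $p_\Gamma^{-1}(K)$ is $\Gamma$-invariant, this becomes $[a, \gamma^{-1}\eta\gamma \cdot b] \cap p_\Gamma^{-1}(K) \subseteq \widetilde{K} \cup \gamma^{-1}\eta\gamma \widetilde{K}$, which says precisely $\gamma^{-1}\eta\gamma \in \Gamma_{\widetilde{K}}$. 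The reverse inclusion is symmetric, giving equality.

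For part (2), I would first observe that the set $S = \{\sigma \in \Gamma : \widetilde{K}_2 \cap \sigma \widetilde{K}_1 \neq \emptyset\}$ is finite by compactness of $\widetilde{K}_1, \widetilde{K}_2$ and proper discontinuity of the $\Gamma$-action; the elements $\gamma_1, \ldots, \gamma_k$ in the statement will be the elements of $S$. Given $\eta \in \Gamma_{\widetilde{K}_2}$ with witnesses $x, y \in \widetilde{K}_2$, the strategy is to replace $x$ and $\eta y$ by points of translates of $\widetilde{K}_1$ that witness $\gamma_i^{-1}\eta\gamma_j \in \Gamma_{\widetilde{K}_1}$ for suitable $i, j$. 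To this end, let $x^\ast$ be the last point along $[x, \eta y]$ that lies in $\widetilde{K}_2$ and $y^\ast$ the first point after $x^\ast$ that lies in $\eta \widetilde{K}_2$; these are well-defined by closedness of $\widetilde{K}_2$ and $\eta \widetilde{K}_2$ together with the fact that $x$ and $\eta y$ are respective endpoints of the segment. By hypothesis $[x, \eta y] \cap p_\Gamma^{-1}(K) \subseteq \widetilde{K}_2 \cup \eta \widetilde{K}_2$, so the open portion of the segment strictly between $x^\ast$ and $y^\ast$ is disjoint from $p_\Gamma^{-1}(K)$, and hence $[x^\ast, y^\ast] \cap p_\Gamma^{-1}(K) \subseteq \{x^\ast, y^\ast\}$.

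Now, since $\widetilde{K}_2 \subseteq p_\Gamma^{-1}(K) = \Gamma \cdot \widetilde{K}_1$, I pick $\gamma_i \in S$ with $x^\ast \in \gamma_i \widetilde{K}_1$; writing $y^\ast = \eta z^\ast$ with $z^\ast \in \widetilde{K}_2$, I pick $\gamma_j \in S$ with $z^\ast \in \gamma_j \widetilde{K}_1$. Setting $a = \gamma_i^{-1} x^\ast \in \widetilde{K}_1$ and $b = \gamma_j^{-1} z^\ast \in \widetilde{K}_1$, the segment $[a, \gamma_i^{-1}\eta\gamma_j \cdot b]$ is the $\gamma_i^{-1}$-translate of $[x^\ast, y^\ast]$. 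Applying $\gamma_i^{-1}$ to the inclusion above and using $\Gamma$-invariance of $p_\Gamma^{-1}(K)$ yields $[a, \gamma_i^{-1}\eta\gamma_j \cdot b] \cap p_\Gamma^{-1}(K) \subseteq \widetilde{K}_1 \cup \gamma_i^{-1}\eta\gamma_j \widetilde{K}_1$, so $\gamma_i^{-1}\eta\gamma_j \in \Gamma_{\widetilde{K}_1}$, i.e., $\eta \in \gamma_i \Gamma_{\widetilde{K}_1} \gamma_j^{-1}$. The only real subtlety I anticipate is the existence of $x^\ast$ and $y^\ast$ and handling degenerate cases such as $x^\ast = y^\ast$ (when $\widetilde{K}_2$ and $\eta \widetilde{K}_2$ overlap on the segment), but these are routine compactness points; the rest is essentially $\Gamma$-equivariance bookkeeping.
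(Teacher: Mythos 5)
Your proof is correct. Note that the paper itself gives no argument for this proposition --- it is imported verbatim from Schapira--Tapie \cite[Proposition 7.9 (1) and (3)]{st} --- so there is no in-paper proof to compare against; judged on its own, your argument is sound and self-contained. Part (1) is exactly the expected equivariance computation, using that $p_\Gamma^{-1}(K)$ is $\Gamma$-invariant and that $\gamma^{-1}[x,\eta y]=[\gamma^{-1}x,\gamma^{-1}\eta\gamma\,\gamma^{-1}y]$. In part (2) you correctly identify and handle the one genuine subtlety: one cannot simply reuse the original witnesses $x,y\in\widetilde{K}_2$, because $[x,\eta y]$ may meet $p_\Gamma^{-1}(K)$ at points of $\widetilde{K}_2\cup\eta\widetilde{K}_2$ lying outside $\gamma_i\widetilde{K}_1\cup\eta\gamma_j\widetilde{K}_1$. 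Trimming to $[x^\ast,y^\ast]$, where $x^\ast$ is the last point of the segment in $\widetilde{K}_2$ and $y^\ast$ the first point at or after $x^\ast$ in $\eta\widetilde{K}_2$, forces $[x^\ast,y^\ast]\cap p_\Gamma^{-1}(K)\subseteq\{x^\ast,y^\ast\}$, after which translating by $\gamma_i^{-1}$ gives $\gamma_i^{-1}\eta\gamma_j\in\Gamma_{\widetilde{K}_1}$ with witnesses $a=\gamma_i^{-1}x^\ast$ and $b=\gamma_j^{-1}\eta^{-1}y^\ast$; the finiteness of $S=\{\sigma:\widetilde{K}_2\cap\sigma\widetilde{K}_1\neq\emptyset\}$ is indeed just proper discontinuity plus compactness (the same fact underlying Lemma \ref{lem:nicepreimage}(3)), and the degenerate case $x^\ast=y^\ast$ is harmless since a one-point segment trivially satisfies the defining inclusion. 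This matches the standard argument for the cited result.
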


As proved in \cite[Section 7.3]{st}, if $\M$ is convex-cocompact and $K$ is a sufficiently large compact set, then $\Gamma_{\widetilde{K}}$ is finite. If $\M$ is geometrically finite with cusps, there exist compact sets $K$ such that $\Gamma_{\widetilde{K}}$ is the union of a finite set $F$ and finitely many sets of the form $\alpha \mathcal{P} \beta$, where $\alpha,\beta\in\Gamma$ belong to a finite set of isometries and $\mathcal{P}<\Gamma$ is a parabolic subgroup of isometries of maximal rank. To the best of our knowledge, there is no general and explicit characterization of $\Gamma_{\widetilde{K}}$ for geometrically infinite manifolds.
  
\begin{definition} Let $K\subseteq \M$ be a compact pathwise connected set with piecewise $C^1$ boundary. The \emph{critical exponent out of $K$}, which we denote by $\delta_{\Gamma_K}$, is the exponent of convergence of the Poincar\'e series $\sum_{\gamma\in \Gamma_{\widetilde{K}}} e^{-sd(o,\gamma\cdot o)}$
\end{definition}

It follows from Proposition \ref{prop:ccecompacts} that $\delta_{\Gamma_K}$ is well-defined and independent of the nice preimage $\widetilde{K}$. The critical exponent $\delta_{\Gamma_K}$ coincides with the exponential growth of the orbits of $\Gamma_{\widetilde{K}}$, that is,
\[
\delta_{\Gamma_K}=\limsup_{R\to\infty}\frac{\log(\#\{\gamma\in\Gamma_{\widetilde{K}}: d(o,\gamma\cdot o)\leq R\})}{R}.
\]
If $K_1,K_2$ are compact sets with piecewise $C^1$ boundaries such that  $K_1\subseteq\text{int}(K_2)$, then $\delta_{\Gamma_{K_2}}\leq \delta_{\Gamma_{K_1}}$ (see \cite[Proposition 7.9 (2)]{st}). 

\begin{definition} The \emph{critical exponent at infinity} of $\M$ is defined by
\[
\delta_\Gamma^\infty = \inf\{\delta_{\Gamma_K}: K\subseteq \M \emph{ is a compact set}\}.
\]
\end{definition}

In particular, if $\M$ is convex-cocompact, then $\delta^\infty_\Gamma=0$. Moreover, if $\M$ is geometrically finite with cusps, $\delta^\infty_\Gamma$ is the maximal critical exponent among parabolic subgroups of $\Gamma$. Some estimates of $\delta^\infty_\Gamma$ are provided in \cite[Section 7.3.3]{st} for a specific class of geometrically infinite manifolds, namely Ancona-like surfaces.

\begin{remark}\label{rem:comp} Let $(K_n)_{n\in\N}$ be a sequence of compact, pathwise connected sets with piecewise $C^1$ boundaries, such that $\M=\bigcup_{n\in\N} K_n$ and $K_n\subseteq \emph{int}(K_{n+1})$ for all $n$. Note that if  $K\subseteq\M$ is a compact set, then $K\subseteq K_n$ for some $n$. Therefore, $$\delta_\Gamma^\infty=\lim_{n\to\infty}\delta_{\Gamma_{K_n}}.$$
\end{remark}

We conclude this subsection with a result that clarifies the relationship between $\Gamma_{\widetilde{K}}$ and geodesic segments that spend the majority of their length outside compact sets. Let $A(r)$ denote the $r$-neighborhood of $A$.

\begin{lemma}\label{lem:kr} Let \( K, R \subseteq \mathcal{M} \) be nice compact sets such that \( R(2) \subseteq K \). Let \( \widetilde{K} \subseteq \widetilde{\mathcal{M}} \) be a nice preimage with diameter \( \Delta \). Consider \(\gamma \in \Gamma_{\widetilde{K}}\) and \( g \in \Gamma \). Define \( L = d(o, g \gamma \cdot o) \) and let  \(\phi : [0, L] \to \widetilde{\mathcal{M}}\) be the arc-length parametrization of the geodesic segment \([o, g\gamma \cdot o]\) such that \(\phi(0) =o\). Then, if  $t\in [0, L]$ and \(\phi(t) \in p_\Gamma^{-1}(R)\), then \(t \in [0,4\Delta +c(g)] \cup [L-3\Delta, L]\), where $c(g)=2d(o,g\cdot o)+\log(2)$.
\end{lemma}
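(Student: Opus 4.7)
The strategy is to compare $\phi$ with the auxiliary geodesic segment $\psi:=[gx,\,g\gamma\cdot y]$, where $x,y\in\widetilde{K}$ are the witnesses that $\gamma\in\Gamma_{\widetilde{K}}$. Applying the isometry $g$ to the defining property of $\Gamma_{\widetilde{K}}$, the segment $\psi$ meets $p_\Gamma^{-1}(K)$ only inside $g\cdot\widetilde{K}\cup g\gamma\cdot\widetilde{K}$. Assuming $o\in\widetilde{K}$, the endpoints of $\phi$ and $\psi$ satisfy the asymmetric bounds $d(o,gx)\le d(o,g\cdot o)+\Delta$ on the left and $d(g\gamma\cdot o,g\gamma\cdot y)\le\Delta$ on the right, and this asymmetry is precisely what produces the different constants $4\Delta+c(g)$ and $3\Delta$ in the conclusion.

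Because Lemma \ref{lem:upa0} only applies to geodesics sharing a common starting point, the plan is to pass through the intermediate geodesic $\phi_3:[0,L_3]\to\widetilde{\M}$ from $o$ to $g\gamma\cdot y$ and invoke the lemma twice. First, $\phi$ and $\phi_3$ both start at $o$ with end-offset at most $\Delta$, so Lemma \ref{lem:upa0} with $Q=\Delta$ yields $d(\phi(t),\phi_3(t))\le 1/2$ for every $t<L-2\Delta-\log 2$. Second, the reversed geodesics $\bar\phi_3$ and $\bar\psi$ both start at $g\gamma\cdot y$ with end-offset at most $d(o,g\cdot o)+\Delta$, so Lemma \ref{lem:upa0} with $Q=d(o,g\cdot o)+\Delta$, combined with the reparametrization $s:=L_\psi-L_3+t$, gives $d(\phi_3(t),\psi(s))\le 1/2$ for every $t>2\Delta+c(g)$ (noting $2(d(o,g\cdot o)+\Delta)+\log 2 = 2\Delta+c(g)$). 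The triangle inequality then produces, for $t$ in the overlap interval $(2\Delta+c(g),\,L-2\Delta-\log 2)$, the key estimate $d(\phi(t),\psi(s))\le 1$.

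Now suppose $\phi(t)\in p_\Gamma^{-1}(R)$ for some $t$ in this overlap. Since $p_\Gamma$ is distance non-increasing, $\psi(s)\in p_\Gamma^{-1}(R(1))\subseteq p_\Gamma^{-1}(R(2))\subseteq p_\Gamma^{-1}(K)$, so by the defining property of $\Gamma_{\widetilde{K}}$ either $\psi(s)\in g\cdot\widetilde{K}$ or $\psi(s)\in g\gamma\cdot\widetilde{K}$. In the first case, $s=d(gx,\psi(s))\le\Delta$, which together with $L_3-L_\psi\le d(o,gx)\le d(o,g\cdot o)+\Delta$ forces $t\le 2\Delta+d(o,g\cdot o)<2\Delta+c(g)$, contradicting the lower bound on $t$. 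In the second case, $L_\psi-s\le\Delta$ forces $t\ge L_3-\Delta\ge L-2\Delta$, contradicting the upper bound on $t$. Hence no such $t$ exists in the overlap, so every $t$ with $\phi(t)\in p_\Gamma^{-1}(R)$ must satisfy $t\in[0,\,2\Delta+c(g)]\cup[L-2\Delta-\log 2,\,L]\subseteq[0,\,4\Delta+c(g)]\cup[L-3\Delta,\,L]$, where the last inclusion uses $\Delta\ge\log 2$, a condition that may be arranged by enlarging $K$, with the degenerate case $L\le 2\Delta+\log 2$ being absorbed into $[0,4\Delta+c(g)]$.

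The main technical point is the asymmetric bookkeeping produced by the two different endpoint offsets: because Lemma \ref{lem:upa0} demands a common basepoint, the introduction of the intermediate geodesic $\phi_3$ is precisely what allows the translation length $d(o,g\cdot o)$ to be absorbed only into the left buffer $4\Delta+c(g)$ and not into the right buffer $3\Delta$.
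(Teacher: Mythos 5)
Your argument is correct and rests on the same two pillars as the paper's proof --- fellow-traveling of geodesics with nearby endpoints plus the defining property of $\Gamma_{\widetilde{K}}$ transported by $g$ --- but the route through the intermediate geodesic is genuinely different. The paper reverses $\phi$ so that it starts at $g\gamma\cdot o$, compares it with $[g\gamma\cdot o, g\cdot o]$ via Lemma \ref{lem:upa0} (absorbing the offset $d(o,g\cdot o)$ at that stage) and then compares $[g\gamma\cdot o,g\cdot o]$ with $[g\gamma\cdot y, g\cdot x]$ via Lemma \ref{lem:upa}, keeping the time parameter synchronized so that the exclusion of the middle of $[g\cdot x,g\gamma\cdot y]$ from $\Gamma\widetilde{K}$ transfers directly; you instead route through $[o,g\gamma\cdot y]$ and apply Lemma \ref{lem:upa0} twice (the second time after reversing at $g\gamma\cdot y$, with offset $d(o,g\cdot x)\le d(o,g\cdot o)+\Delta$), which avoids Lemma \ref{lem:upa} altogether at the cost of the reparametrization $s=L_\psi-L_3+t$ and an explicit two-case analysis of where $\psi(s)$ lands in $g\widetilde{K}\cup g\gamma\widetilde{K}$; your case analysis and the asymmetric bookkeeping all check out. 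Two small remarks. First, both your proof and the paper's use $o\in\widetilde{K}$ (the paper needs $d(g\gamma\cdot o,g\gamma\cdot y)\le\Delta$ and $d(g\cdot o,g\cdot x)\le\Delta$ exactly as you do), and this is consistent with the application, where the nice preimages $\widetilde{K}_n$ are chosen to contain $o$. Second, what you actually prove is $t\in[0,2\Delta+c(g)]\cup[L-2\Delta-\log 2,L]$, and matching the stated right-hand interval $[L-3\Delta,L]$ needs $\Delta\ge\log 2$, which you flag; this is harmless (it holds as soon as $\widetilde{K}$ has diameter at least $\log 2$, as it does in the intended setting where $K\supseteq R(2)$ contains a $2$-ball), and in any case the intervals you obtain serve verbatim in the proof of Proposition \ref{prop:inside}, so only trivial constant bookkeeping separates your conclusion from the literal statement.
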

\begin{proof}
Let \(\phi_0 : [0, L] \to \widetilde{\mathcal{M}}\) be the arc-length parametrization of the geodesic segment \([o, g\gamma \cdot o]\) such that \(\phi_0(0) = g\gamma \cdot o\). Define \(L_1 = d(o, \gamma \cdot o)\), and let \(\phi_1 : [0, L_1] \to \widetilde{\mathcal{M}}\) be the arc-length parametrization of the geodesic segment \([g\gamma \cdot o, g \cdot o]\) such that \(\phi_1(0) = g\gamma \cdot o\).

By the definition of \(\Gamma_{\widetilde{K}}\), there exist \(x, y \in \widetilde{K}\) such that \([x, \gamma \cdot y] \cap \Gamma \widetilde{K} \subseteq \widetilde{K} \cap \gamma \cdot \widetilde{K}\). Let \(L_2 = d(x, \gamma \cdot y)\), and define \(\phi_2 : [0, L_2] \to \widetilde{\mathcal{M}}\) as the arc-length parametrization of the geodesic segment \([g \cdot x, g\gamma \cdot y]\) such that \(\phi_2(0) = g\gamma \cdot y\). By definition, if \(x_1, x_2 \in \widetilde{K}\), then \(d(x_1, x_2) \leq \Delta\). In particular, 
\[
\sup \{t \in [0, L_2] : \phi_1(t) \in \widetilde{K}\} \leq \Delta
\quad \text{and} \quad 
\inf \{t \in [0, L_2] : \phi_1(t) \in \gamma \widetilde{K}\} \geq L_2 - \Delta.
\]
Thus, if \(\phi_2(t) \in \Gamma \widetilde{K}\), then \(t \in [0, \Delta] \cup [L_2 - \Delta, L_2]\).

We can apply Lemma \ref{lem:upa0} to the geodesic segments \(\phi_0\) and \(\phi_1\), where \(Q = d(o, g \cdot o)\). In particular, if \(t \in [0, L - c(g)]\), then \(d(\phi_0(t), \phi_1(t)) \leq 1\). Similarly, we can apply Lemma \ref{lem:upa} to the geodesic segments \(\phi_1\) and \(\phi_2\), where \(Q = \Delta\). In particular, if \(t \in [3\Delta, L_1 - 3\Delta]\), then \(d(\phi_1(t), \phi_2(t)) \leq 1\).

Combining these results, we conclude that if \(t \in [3\Delta, L - 4\Delta - c(g)]\), then \(d(\phi_0(t), \phi_2(t)) \leq 2\). Moreover, if \(t \in [3\Delta, L - 4\Delta - c(g)]\) and \(\phi_0(t) \in p_\Gamma^{-1}(R)\), then \(\phi_2(t) \in p_\Gamma^{-1}(R)(2) \subseteq \Gamma \widetilde{K}\), implying \(t \in [0, \Delta] \cup [L_2 - \Delta, L_2]\), but $\Delta<3\Delta<L-4\Delta-c(g)<L_2-\Delta$.  Hence, if $t\in [0, L]$ and \(\phi_0(t) \in p_\Gamma^{-1}(R)\), then \(t \in [0, 3\Delta] \cup [L- 4\Delta - c(g), L]\).
\end{proof}

\subsection{Hausdorff dimension}\label{ss:hd} 

Let $E$ be a Borel subset of $\partial_\infty\widetilde{\M}$ and $x\in\widetilde{\M}$. For every $s>0$ and $\delta>0$, define
\[
\mathcal{H}^s_{\delta,x}(E)=\inf \sum_i r_i^s,
\]
where the infimum is taken over all finite coverings $\{B_x(\xi_i,r_i)\}$ of $E$ by balls of radius $r_i\leq\delta$. 
Then, define
\[
\mathcal{H}^s_{x}(E)=\lim_{\delta\to 0}\mathcal{H}^s_{\delta,x}(E).
\]
Remarkably, $\mathcal{H}_x^s(\cdot)$ defines a measure on $\partial_\infty\widetilde{\M}$,  known as the {Hausdorff measure of dimension} $s$ with respect to the metric $d_x$. It is known that there exists  $s^\star\ge 0$ such that

\[
\mathcal{H}^s_{x}(E)=
 \begin{cases}
      \infty & \text{ if } 0\leq s < s^\star \\
      0 & \text{ if } s^\star<s\leq+\infty. 
 \end{cases}
\]
The \emph{Hausdorff dimension} of $E$ {with respect to} $d_x$ is defined to be equal to $s^\star$.  Since $d_x$ and $d_{x'}$ are comparable for every $x,x'\in\widetilde{\M}$ (see equation \eqref{eq:conformal}), the Hausdorff dimension of subsets of $\partial_\infty\widetilde{\M}$ is independent of $x\in\widetilde{\M}$.
The Hausdorff dimension of $E\subseteq\partial_\infty\widetilde{\M}$ with respect to any of the visual metrics is denoted by $\text{HD}(E)$. 

The following result is a useful tool that allows us to establish lower bounds for the Hausdorff dimension of a given set (see for instance \cite[Chapter 4]{falconer}).

\begin{lemma}[Mass distribution principle] Let $E$ be a Borel subset of $\partial_\infty\widetilde{\M}$. Assume that there exists $C>0$ and a positive measure $\mu$ on $E$ such that for every $\xi\in \partial_\infty\widetilde{\M}$ and $r>0$, we have
\[
\mu(B_x(\xi,r))\leq Cr^s.
\]
Then $\emph{HD}(E)\geq s$.
\end{lemma}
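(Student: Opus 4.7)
The plan is straightforward: bound the Hausdorff pre-measures $\mathcal{H}^s_{\delta, x}(E)$ from below uniformly in $\delta$ by using the hypothesis on $\mu$, and then invoke the dichotomy in the definition of the Hausdorff dimension recalled just above the statement.

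Concretely, I would fix $\delta > 0$ and let $\{B_x(\xi_i, r_i)\}_i$ be any finite covering of $E$ by balls of radius $r_i \leq \delta$, as in the definition of $\mathcal{H}^s_{\delta, x}$ in the excerpt. By the countable subadditivity of $\mu$ and the assumed bound $\mu(B_x(\xi, r)) \leq C r^s$,
\begin{equation*}
\mu(E) \;\leq\; \sum_i \mu(B_x(\xi_i, r_i)) \;\leq\; C \sum_i r_i^s.
\end{equation*}
Taking the infimum over all such coverings yields $\mathcal{H}^s_{\delta, x}(E) \geq \mu(E)/C$, a lower bound that does not depend on $\delta$. Letting $\delta \to 0$ gives $\mathcal{H}^s_x(E) \geq \mu(E)/C$, and since $\mu$ is by assumption a positive measure on $E$ (so $\mu(E) > 0$), this lower bound is strictly positive.

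To conclude, recall that there exists $s^\star \geq 0$ with $\mathcal{H}^t_x(E) = \infty$ for $0 \leq t < s^\star$ and $\mathcal{H}^t_x(E) = 0$ for $s^\star < t \leq \infty$, and that $\text{HD}(E) = s^\star$ by definition. Having shown $\mathcal{H}^s_x(E) > 0$, the case $s > s^\star$ is excluded, so $s \leq s^\star = \text{HD}(E)$, which is exactly what we needed. There is no real obstacle in this argument; the only points to be careful about are that the definition of $\mathcal{H}^s_{\delta, x}$ in the paper uses finite coverings (which the estimate above handles without modification), and that ``positive measure on $E$'' is to be read as $\mu(E) > 0$, without which the statement would be vacuous.
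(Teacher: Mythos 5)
Your argument is correct and complete: it is the standard proof of the mass distribution principle, exactly the one found in the reference the paper cites (\cite[Chapter 4]{falconer}), since the paper itself states the lemma without proof. The two points you flag (finite coverings suffice for the subadditivity estimate, and ``positive measure on $E$'' meaning $\mu(E)>0$) are handled correctly, so nothing further is needed.
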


We now recall the definition of the Hausdorff dimension of a measure. 

\begin{definition} Let $\nu$ be a  finite  Borel measure on $\partial_\infty\widetilde{\M}$. The \emph{Hausdorff dimension of} $\nu$ is defined by
\[
\emph{HD}(\nu)=\inf\{\emph{HD}(A): A\subseteq\partial_\infty\widetilde{\M}, \ \nu(A)>0\}.
\]
\end{definition}

The following result establishes a relationship between the Hausdorff dimension of a measure and the polynomial decay of the mass of balls.

\begin{proposition}[{\cite[Proposition 2.5]{Led}}]\label{prop:ledhd} Let $\nu$ be a  finite Borel measure on $\partial_\infty\widetilde{\M}$. Then 
\[
\emph{HD}(\nu)=\essinf \liminf_{\varepsilon\to 0} \frac{\log \nu(B_o(\xi,\varepsilon))}{\log \varepsilon},
\]
where the essential infimum is considered with respect to $\nu$.
\end{proposition}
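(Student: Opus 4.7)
The plan is to establish the equality $\text{HD}(\nu) = \essinf \underline{d}_\nu$ by proving the two inequalities separately, where $\underline{d}_\nu(\xi) := \liminf_{\varepsilon \to 0} \log \nu(B_o(\xi, \varepsilon)) / \log \varepsilon$ denotes the lower local dimension of $\nu$ at $\xi$, and I set $d := \essinf \underline{d}_\nu$. Both directions will reduce to covering-type arguments on the compact metric space $(\partial_\infty \widetilde{\M}, d_o)$: for the upper bound $\text{HD}(\nu) \le d$ I would use the Vitali $5r$-covering lemma together with the finiteness of $\nu$, while for the lower bound $\text{HD}(\nu) \ge d$ I would apply the Mass Distribution Principle stated just above to a suitable restriction of $\nu$.

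For the upper bound I fix $s > d$. By definition of the essential infimum, $E_s := \{\xi : \underline{d}_\nu(\xi) < s\}$ has $\nu(E_s) > 0$, and for each $\xi \in E_s$ and each tolerance $\delta > 0$ the $\liminf$ condition furnishes a radius $r(\xi) < \delta$ with $\nu(B_o(\xi, r(\xi))) > r(\xi)^s$. Applying the Vitali $5r$-covering lemma to this family yields a countable disjoint subcollection $\{B_o(\xi_i, r_i)\}$ whose $5$-blowups still cover $E_s$, and disjointness combined with the defining inequality gives
\[
\sum_i (5 r_i)^s \le 5^s \sum_i \nu(B_o(\xi_i, r_i)) \le 5^s \, \nu(\partial_\infty \widetilde{\M}) < \infty,
\]
uniformly in $\delta$. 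Letting $\delta \to 0$ bounds $\mathcal{H}^s_o(E_s)$ by a finite constant, hence $\text{HD}(E_s) \le s$, and since $\nu(E_s) > 0$, $\text{HD}(\nu) \le s$; taking $s \downarrow d$ closes this direction.

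For the lower bound I fix $s < d$. The a.e. inequality $\underline{d}_\nu(\xi) > s$ supplies, for each such $\xi$, some $N(\xi) \in \N$ with $\nu(B_o(\xi, r)) \le r^s$ for all $r < 1/N(\xi)$, so setting $A_N := \{\xi : \nu(B_o(\xi, r)) \le r^s \text{ for all } r < 1/N\}$ I obtain $\nu(\bigcup_N A_N) = \nu(\partial_\infty \widetilde{\M})$. Given any Borel $A$ with $\nu(A) > 0$, I pick $N$ with $\nu(A \cap A_N) > 0$ and apply the Mass Distribution Principle to $\mu := \nu|_{A \cap A_N}$. The required uniform bound $\mu(B_o(\xi, r)) \le C r^s$ comes from the following observation: for $r < 1/(2N)$ and any $\xi$ with $B_o(\xi, r) \cap A_N \ne \emptyset$, an auxiliary point $\xi' \in B_o(\xi, r) \cap A_N$ yields $B_o(\xi, r) \subseteq B_o(\xi', 2r)$ with $2r < 1/N$, so $\mu(B_o(\xi, r)) \le \nu(B_o(\xi', 2r)) \le 2^s r^s$; empty intersections give $\mu(B_o(\xi, r)) = 0$, while radii $r \ge 1/(2N)$ are absorbed by the total mass bound. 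The principle then forces $\text{HD}(A \cap A_N) \ge s$, hence $\text{HD}(A) \ge s$, and infimizing over $A$ yields $\text{HD}(\nu) \ge s$.

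The argument is standard once these two ingredients are in place, so I expect no serious obstacle; the main point requiring genuine care is the measurability of $A_N$ (needed in order to form the restriction $\nu|_{A \cap A_N}$), which I would handle by noting that $r \mapsto \nu(B_o(\xi, r))$ is nondecreasing with at most countably many discontinuities, so the defining condition can be tested on a countable dense set of radii, reducing the issue to the measurability of $\xi \mapsto \nu(B_o(\xi, r))$ for fixed $r$ -- a routine fact on the compact metric space $\partial_\infty \widetilde{\M}$.
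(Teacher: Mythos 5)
Your proof is correct. Note that the paper does not prove this statement at all -- it is quoted directly from Ledrappier \cite{Led} -- so there is no internal argument to compare against; what you give is the standard Frostman-type argument (Vitali $5r$-covering plus disjointness and finiteness of $\nu$ for $\mathrm{HD}(\nu)\le \essinf\underline{d}_\nu$, and the mass distribution principle applied to $\nu$ restricted to the sets $A_N$ where the local scaling bound holds uniformly, for the reverse inequality), which is essentially how the cited result is proved in the literature. Both halves check out: the sign manipulations with $\log\varepsilon<0$ are handled correctly, the doubling trick $B_o(\xi,r)\subseteq B_o(\xi',2r)$ legitimately transfers the bound from points of $A_N$ to arbitrary centers, and your treatment of the measurability of $A_N$ (testing on a countable dense set of radii, using monotonicity of $r\mapsto\nu(B_o(\xi,r))$ and measurability of $\xi\mapsto\nu(B_o(\xi,r))$) is adequate; the same remark disposes of the measurability of $E_s$, which you use implicitly in the upper bound. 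The only friction is with the paper's stated definition of $\mathcal{H}^s_{\delta,x}$ via \emph{finite} coverings, whereas your Vitali argument produces a countable cover; this is an artifact of the paper's phrasing (the standard countable-cover definition is clearly intended, and is what \cite{Led} uses), not a gap in your argument.
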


The proposition above, together with the mass distribution principle, implies the following variational principle regarding Hausdorff dimensions.

\begin{theorem}[{\cite[Th\'eor\`eme 2.8 (a)]{Led}}]\label{teo:vphd}
Let $A\subseteq \partial_\infty\widetilde{\M}$ be a Borel set. Then
\[
\emph{HD}(A)=\sup_\nu\emph{HD}(\nu),
\]
where the supremum runs over all finite measures $\nu$ on $\partial_\infty\widetilde{\M}$ such that $\nu(\partial_\infty\widetilde{\M}\setminus A)=0$.
\end{theorem}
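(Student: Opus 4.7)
The plan is to prove the two inequalities separately. The inequality $\text{HD}(A) \geq \sup_\nu \text{HD}(\nu)$ is immediate from the definition of $\text{HD}(\nu)$: if $\nu$ is a finite nonzero Borel measure on $\partial_\infty\widetilde{\M}$ with $\nu(\partial_\infty\widetilde{\M} \setminus A) = 0$, then $\nu(A) > 0$, so $A$ itself is one of the sets appearing in the infimum that defines $\text{HD}(\nu)$, and
\[
\text{HD}(\nu) = \inf\{\text{HD}(B) : \nu(B) > 0\} \leq \text{HD}(A).
\]
Taking the supremum over all admissible $\nu$ yields one direction.

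For the reverse inequality $\text{HD}(A) \leq \sup_\nu \text{HD}(\nu)$, I would produce, for each $s < \text{HD}(A)$, a finite Borel measure $\nu$ with $\nu(\partial_\infty\widetilde{\M} \setminus A) = 0$ and $\text{HD}(\nu) \geq s$. Since $s < \text{HD}(A)$ implies $\mathcal{H}^s_o(A) = +\infty$, Frostman's lemma in the compact metric space $(\partial_\infty\widetilde{\M}, d_o)$ provides a nonzero finite Borel measure $\nu$ supported on $A$ together with a constant $C > 0$ such that
\[
\nu(B_o(\xi, r)) \leq C r^s \qquad \text{for every } \xi \in \partial_\infty\widetilde{\M} \text{ and every } r > 0.
\]
Taking logarithms and dividing by $\log \varepsilon$, which is negative for small $\varepsilon$, gives
\[
\liminf_{\varepsilon \to 0} \frac{\log \nu(B_o(\xi, \varepsilon))}{\log \varepsilon} \geq s
\]
for every $\xi$ in the support of $\nu$. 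Proposition \ref{prop:ledhd} then yields $\text{HD}(\nu) \geq s$. Letting $s \nearrow \text{HD}(A)$ concludes the argument.

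The main obstacle is the invocation of Frostman's lemma on $(\partial_\infty\widetilde{\M}, d_o)$. The classical proof in compact metric spaces proceeds by constructing a nested, dyadic-like decomposition into cells whose diameters at generation $n$ are comparable to $2^{-n}$; this is available here since visual boundaries of Hadamard manifolds of pinched negative curvature are compact doubling metric spaces, admitting Christ-type dyadic partitions. One distributes a preliminary mass $\text{diam}(C)^s$ on each cell $C$ of some deep generation that intersects $A$, trims the mass up the tree so that no ancestor exceeds its budget, and passes to a weak-$\star$ limit; the limit is nonzero precisely because $\mathcal{H}^s_o(A) > 0$. Once Frostman's lemma is secured, the remainder of the argument is the pointwise comparison above combined with Proposition \ref{prop:ledhd}, and involves no further geometric input.
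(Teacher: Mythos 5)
Your two-inequality structure is the right one, and it is essentially the argument the paper points to: the statement is quoted from Ledrappier, with the remark that it follows from Proposition \ref{prop:ledhd} together with the mass distribution principle, and your proof fills that in the standard way (the inequality $\sup_\nu \mathrm{HD}(\nu)\le \mathrm{HD}(A)$ is immediate from the definition of $\mathrm{HD}(\nu)$, provided you restrict to nonzero $\nu$; the reverse inequality is Frostman's lemma combined with Proposition \ref{prop:ledhd}, or equivalently with the mass distribution principle applied to every set of positive $\nu$-measure).

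The one step that needs more care is the Frostman step for a general \emph{Borel} set $A$. The cell construction you sketch (assign mass $\mathrm{diam}(C)^{s}$ to the deep-generation cells that \emph{meet} $A$, trim upwards, pass to a weak-$\star$ limit) only produces a measure carried by the closure of $A$: nothing prevents the limit measure from charging points of $\overline{A}\setminus A$, so the required property $\nu(\partial_\infty\widetilde{\M}\setminus A)=0$ is not automatic unless $A$ is closed or compact. The statement is nevertheless true for Borel (indeed analytic) sets, but to get it you should either invoke Howroyd's general form of Frostman's lemma, or first use the Davies--Howroyd subset theorem to extract a compact $K\subseteq A$ with $0<\mathcal{H}^{s}_{o}(K)<\infty$ and then run the classical construction (or an upper-density argument for the restriction of $\mathcal{H}^{s}_{o}$ to $K$). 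Both of these hold in any compact, indeed complete separable, metric space, so as a bonus you do not need the doubling property of $(\partial_\infty\widetilde{\M},d_o)$ that you invoke to build Christ-type dyadic cubes (that property is true for pinched negative curvature, via bounded growth at some scale, but justifying it is an extra argument the proof can avoid). With the Frostman step secured in this form, the rest of your argument, namely the pointwise bound $\nu(B_o(\xi,r))\le Cr^{s}$ giving $\liminf_{\varepsilon\to0}\log\nu(B_o(\xi,\varepsilon))/\log\varepsilon\ge s$ everywhere and hence $\mathrm{HD}(\nu)\ge s$ by Proposition \ref{prop:ledhd}, is correct.
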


\section{Piecewise geodesic rays}\label{ss:qgeod}

We say that $\phi:[0,\infty)\to\widetilde{\M}$ is a \emph{piecewise geodesic ray} if it is continuous and arc-parametrizes a collection of countably many geodesic segments. In this case, there exists an increasing sequence of real numbers $(t_n)_n$ converging to $\infty$, with $t_0=0$, such that $\phi|_{[t_{n-1},t_{n}]}$ is a geodesic parametrized by arc length for every $n\in\N$. By definition, the distance between $\phi(t_{n-1})$ and $\phi(t_{n})$ is given by $\ell_n:=t_{n}-t_{n-1}$. The interior angle at $\phi(t_n)$ between the geodesic segments $[\phi(t_{n-1},\phi(t_n)]$ and $[\phi(t_n),\phi(t_{n+1})]$ is denoted by $\theta_n$, with each interior angle defined to lie in $[0,\pi)$. Using this notation, we say that $\phi$ has lengths $(\ell_n)_n$ and angles $(\theta_n)_n$.

\begin{definition} Let $\lambda\geq 1$ and $c,L>0$. A piecewise geodesic ray $\phi:[0,\infty)\to\widetilde{\M}$ is called a \emph{$(\lambda,c,L)$-local-quasi-geodesic ray} if for every $[a,b]\subseteq [0,\infty]$, with $b-a\leq L$, we have that
\[
 \lambda^{-1} (b-a)-c \leq d(\phi(a),\phi(b))\leq \lambda (b-a)+c.
\]
\end{definition}

The following lemma is well-known and very useful. For completeness we provide a proof. 

\begin{lemma}\label{lem:separation} Let $\alpha\in (0,\pi)$. Let $x,y,z\in\widetilde{\M}$ be points such that the interior angle at $y$ between the geodesic segments $[x,y]$ and $[y,z]$ is at least $\alpha$. Then, there exists a constant $c=c(\alpha)>0$ such that 
\[
d(x,z) \geq d(x,y)+d(y,z)-c.
\]
\end{lemma}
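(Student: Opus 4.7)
The plan is to reduce the inequality to a hyperbolic trigonometric estimate via a CAT(-1) comparison triangle, in the same spirit as the proof of Lemma \ref{lem:shadow2}. Write $a=d(x,y)$, $b=d(y,z)$, $C=d(x,z)$, and let $\beta\ge\alpha$ denote the interior angle at $y$. First I would build a comparison triangle $\bar{\triangle}(\bar{x},\bar{y},\bar{z})$ in $\mathbb{D}$ with $d_1(\bar{x},\bar{y})=a$, $d_1(\bar{y},\bar{z})=b$, $d_1(\bar{x},\bar{z})=C$. Since $\widetilde{\mathcal{M}}$ is a CAT(-1) space, the Alexandrov angle at $y$ is bounded above by the interior angle $\bar{\beta}$ of the comparison triangle at $\bar{y}$; in particular $\bar{\beta}\ge\beta\ge\alpha$.

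Next, I would apply the hyperbolic law of cosines at $\bar{y}$ and use that $\cos$ is decreasing on $[0,\pi]$ to get
$$\cosh C=\cosh a\cosh b-\sinh a\sinh b\cos\bar{\beta}\ \ge\ \cosh a\cosh b-\sinh a\sinh b\cos\alpha.$$
Using the identities $\cosh a\cosh b=\tfrac{1}{2}(\cosh(a+b)+\cosh(a-b))$ and $\sinh a\sinh b=\tfrac{1}{2}(\cosh(a+b)-\cosh(a-b))$, the right-hand side rewrites as
$$\frac{1-\cos\alpha}{2}\cosh(a+b)\ +\ \frac{1+\cos\alpha}{2}\cosh(a-b)\ \ge\ \frac{1-\cos\alpha}{2}\cosh(a+b),$$
where the last inequality uses that $1+\cos\alpha\ge 0$ and $\cosh(a-b)\ge 0$.

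Finally, to convert this bound on $\cosh C$ into a bound on $C$, I would apply the elementary estimates $\cosh C\le e^C$ and $\cosh(a+b)\ge\tfrac{1}{2}e^{a+b}$, which give $e^C\ge\tfrac{1-\cos\alpha}{4}e^{a+b}$ and hence
$$C\ \ge\ a+b-\log\frac{4}{1-\cos\alpha}.$$
Since $\alpha\in(0,\pi)$ forces $1-\cos\alpha\in(0,2]$, the constant $c(\alpha):=\log\frac{4}{1-\cos\alpha}$ is strictly positive, yielding the claim.

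The only subtle point is invoking the correct direction of the CAT(-1) angle comparison (the Alexandrov angle in $\widetilde{\mathcal{M}}$ is at most the comparison angle in $\mathbb{D}$, so the hypothesis on $\beta$ transfers to $\bar{\beta}$); once this is set up, the remainder is routine hyperbolic trigonometry.
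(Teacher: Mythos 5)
Your proof is correct and takes essentially the same route as the paper: a comparison triangle in $\mathbb{D}$, the hyperbolic law of cosines combined with $\cos\bar{\beta}\le\cos\alpha$, and the elementary bounds $\cosh C\le e^{C}$ and $\cosh(a+b)\ge\tfrac{1}{2}e^{a+b}$, giving the identical constant $c(\alpha)=\log\frac{4}{1-\cos\alpha}$. The only (harmless) differences are that you invoke the CAT(-1) angle comparison for the side-side-side comparison triangle, whereas the paper matches the hinge (two sides and the included angle) and compares the third side, and your exact product-to-sum rewriting makes the intermediate estimate transparently valid for all $\alpha\in(0,\pi)$, including obtuse angles.
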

\begin{proof} Let $\theta$ be the interior angle at $y$ between the geodesic segments $[x,y]$ and $[y,z]$. Consider a comparison triangle $\bar{\triangle}(\bar{x},\bar{y},\bar{z})$ in $\mathbb{D}$ such that $d(x,y)=d_1(\bar{x},\bar{y})=:b$, $d(y,z)=d_1(\bar{y},\bar{z})=:c$, and the interior angle at $\bar{y}$ equals $\theta$, where $d_1$ denotes the hyperbolic distance. Since $\widetilde{\M}$ is a $\text{CAT}(-1)$ space, it follows that $d(x,z)\geq d_1(\bar{x},\bar{z})=:a$ (see \cite[Proposition 1.7, Chapter 2]{bhas}). Furthermore, by applying the hyperbolic law of cosine, and that $\theta\ge \alpha,$ we obtain 
\begin{eqnarray*}
\cosh(a) &=& \cosh(b)\cosh(c)-\sinh(b)\sinh(c)\cos(\theta)\\
&\geq& \cosh(b)\cosh(c)-\sinh(b)\sinh(c)\cos(\alpha)\\
&\geq& \cosh(b)\cosh(c)(1-\cos(\alpha))\\
&\geq& \frac{1}{4}e^{b+c}(1-\cos(\alpha)).
\end{eqnarray*}
Using inequality $\cosh(a)\leq e^a$, we obtain $b+c+\log((1-\cos(\alpha))/4)\leq a$. Consequently,
\[
d(x,y)+d(y,z)-c \leq d(x,z),
\]
where $c=-\log((1-\cos(\alpha))/4)>0$.
\end{proof}

In the proof of Theorem \ref{main1}, we will need to approximate piecewise geodesic rays by geodesic rays. For this, we will use the following result. Recall that the Hausdorff distance between two sets $A,B\subseteq\widetilde{\M}$ is defined as $\inf\{r\ge 0: A\subseteq N_r(B), B\subseteq N_r(A)\}$, where $N_r(A)$ is the $r$-neighborhood of $A$.

\begin{theorem}\label{teo:gl} Let \( \alpha \in (0, \pi) \). There exist constants \( L(\alpha) > 0 \) and \( D(\alpha) > 0 \) such that the following holds: for any piecewise geodesic ray \( \phi: [0, \infty) \to \widetilde{\M} \) with segment lengths \( (\ell_n)_n \) and angles \( (\theta_n)_n \) satisfying \( \theta_n \ge \alpha \) and \( \ell_n \ge L(\alpha) \) for all \( n \in \mathbb{N} \), there exists a geodesic ray starting at \( \phi(0) \) that remains within Hausdorff distance \( D(\alpha) \) of \( \phi \).
\end{theorem}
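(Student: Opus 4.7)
The strategy is to reduce the statement to the classical local-to-global principle for quasi-geodesics in Gromov hyperbolic spaces, together with the Morse lemma. Recall that the CAT$(-1)$ space $\widetilde{\M}$ is $\delta$-hyperbolic in the sense of Gromov, with a universal constant $\delta$.

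\textbf{Step 1: $\phi$ is a local quasi-geodesic.} Suppose the breakpoint sequence $(t_n)_{n\geq 0}$ and let $\ell_n = t_n - t_{n-1} \ge L(\alpha)$. The key observation is that if $[a,b]\subseteq [0,\infty)$ has length $b-a \leq L(\alpha)$, then $[a,b]$ can contain at most one breakpoint in its interior (otherwise it would contain an entire segment $[t_n, t_{n+1}]$, forcing $b - a > L(\alpha)$). Consequently, one of two things happens: either $\phi|_{[a,b]}$ is a geodesic segment, in which case $d(\phi(a),\phi(b)) = b-a$; or $\phi|_{[a,b]}$ consists of two geodesic pieces meeting at some interior breakpoint $\phi(t_n)$ with angle $\theta_n \ge \alpha$, in which case Lemma \ref{lem:separation} (applied to $x=\phi(a)$, $y=\phi(t_n)$, $z=\phi(b)$, noting that subsegments inherit the angle) yields
\[
d(\phi(a),\phi(b)) \;\geq\; (t_n-a) + (b-t_n) - c(\alpha) \;=\; (b-a) - c(\alpha).
\]
Combined with the trivial upper bound $d(\phi(a),\phi(b)) \le b-a$ obtained from the triangle inequality and the fact that each piece is arc-length parametrized, this shows that $\phi$ is a $(1, c(\alpha), L(\alpha))$-local-quasi-geodesic ray.

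\textbf{Step 2: From local to global quasi-geodesic.} Here I would invoke the classical local-to-global principle for quasi-geodesics in Gromov hyperbolic spaces (see for instance Bridson–Haefliger, Part III.H, Theorem 1.13): there exist constants $L_0 = L_0(\delta, c(\alpha)) > 0$, $\lambda' = \lambda'(\delta, c(\alpha)) \ge 1$, and $c' = c'(\delta, c(\alpha)) \ge 0$ such that any $(1, c(\alpha), L_0)$-local-quasi-geodesic in $\widetilde{\M}$ is a global $(\lambda', c')$-quasi-geodesic. Choosing $L(\alpha) := \max\{L_0, \text{threshold from Step 1}\}$, the ray $\phi$ becomes a global $(\lambda', c')$-quasi-geodesic ray.

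\textbf{Step 3: Closeness to a genuine geodesic ray.} For each $T>0$ large, let $\sigma_T : [0,T_T] \to \widetilde{\M}$ be the arc-length parametrization of the geodesic segment $[\phi(0),\phi(T)]$. The Morse lemma (stability of quasi-geodesics in hyperbolic spaces) provides a constant $D_0 = D_0(\delta, \lambda', c')$ such that the Hausdorff distance between the image of $\sigma_T$ and $\phi([0,T])$ is at most $D_0$ for every $T$. Since $\phi$ is a quasi-geodesic ray, $\phi(T)$ converges in the cone topology to a unique point $\xi \in \partial_\infty \widetilde{\M}$ as $T \to \infty$; equivalently, the initial vectors of the segments $\sigma_T$ converge in $T^1_{\phi(0)}\widetilde{\M}$ (this follows because two segments $[\phi(0),\phi(T)]$ and $[\phi(0),\phi(T')]$ have initial directions at angle going to $0$ when $T, T'\to\infty$, by a standard CAT$(-1)$ comparison argument). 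The limiting geodesic ray $\sigma : [0,\infty) \to \widetilde{\M}$ starts at $\phi(0)$ and ends at $\xi$, and by passing to the limit we obtain that $\sigma$ is within Hausdorff distance $D(\alpha) := D_0$ of $\phi$.

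\textbf{Main obstacle.} The analytic content (Lemma \ref{lem:separation}) is already in hand; the nontrivial ingredients are the local-to-global principle and the Morse lemma, both of which are standard in Gromov hyperbolic geometry and can be cited directly. The only point requiring some care is the convergence in Step 3 to a geodesic \emph{ray} based at $\phi(0)$, but this is a routine Arzelà–Ascoli argument together with the fact that the endpoints $\phi(T)$ converge to a boundary point, which itself follows from the global quasi-geodesic property established in Step 2.
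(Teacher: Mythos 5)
Your proposal is correct and follows essentially the same route as the paper: Step 1 (showing $\phi$ is a $(1,c(\alpha),L(\alpha))$-local-quasi-geodesic via Lemma \ref{lem:separation}) is exactly the paper's argument, and Steps 2--3 simply unpack the single citation the paper uses (Ghys--de la Harpe, Th\'eor\`eme 25, which directly produces a geodesic ray from $\phi(0)$ at bounded Hausdorff distance from any local quasi-geodesic ray) into the standard local-to-global principle, the Morse lemma, and a limiting argument. The only caveat is bibliographic: Bridson--Haefliger III.H.1.13 concerns local \emph{geodesics} rather than local quasi-geodesics, so you would want to cite a local-to-global statement for quasi-geodesics (e.g.\ Coornaert--Delzant--Papadopoulos or Ghys--de la Harpe, Ch.~5), but this does not affect the substance of the argument.
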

\begin{proof} Observe that if $\phi:[0,\infty)\to\widetilde{\M}$ is a piecewise geodesic ray with angles $\theta_n\ge \alpha$ and lengths $\ell_n\ge L$ for every $n\in \N$, then $\phi$ is a $(1,c(\alpha),L)$-local-quasi-geodesic ray, where $c(\alpha)$ is the constant provided in Lemma \ref{lem:separation}. Indeed, if $[a,b]\subseteq [0,\infty)$ satisfies $b-a\le L$, then either $\phi([a,b])=[\phi(a),\phi(b)]$, or there exists $t_n\in (a,b)$ such that $\phi([a,b])=[\phi(a),\phi(t_n)]\cup[\phi(t_n),\phi(b)]$. In the first case, $d(\phi(a),\phi(b))=b-a$. In the latter case, by  Lemma \ref{lem:separation}, $(b-a)-c(\alpha)\le d(\phi(a),\phi(b))\le (b-a)$, and the conclusion follows.

By \cite[Th\'eor\`eme 25]{ghys}, for $c=c(\alpha)$, there exists $D=D(\alpha)>0$ and $L=L(\alpha)>0$ such that for any $(1,c,L(\alpha))$-local-quasi-geodesic ray $\phi$, there exists a geodesic ray $\gamma:[0,\infty)\to\widetilde{\M}$ with $\phi(0)=\gamma(0)$ that stays within Hausdorff distance \( D(\alpha) \) of \( \phi \). Since any piecewise geodesic ray with angles $\theta_n\ge \alpha$ and lengths $\ell_n\ge L(\alpha)$ for every $n\in \N$ is indeed a $(1,c(\alpha),L(\alpha))$-local-quasi-geodesic ray, the desired conclusion follows.
\end{proof}


The following proposition will be used at the end of Section \ref{sec:proofs}. 
\begin{proposition}\label{prop:geoconst} Let $\alpha\in(0,\pi)$. There exist constants $L=L(\alpha)>0$ and $C=C(\alpha)>0$ such that the following holds: if  $\phi:[0,\infty)\to\widetilde{\M}$ is a piecewise geodesic ray with $\theta_n\geq \alpha$ and $\ell_n\geq L$ for every $n\in\N$, then the interior angle at $\phi(t_{n})$ between the geodesic segments $[\phi(t_0),\phi(t_{n})]$ and $[\phi(t_{n}),\phi(t_{n+1})]$ is greater than $\frac{2\alpha}{3}$, and
\begin{equation*}\label{eq:qg}
d(\phi(t_0),\phi(t_{n}))\geq \sum_{k=1}^{n} \ell_k - (n-1)C  ,  
\end{equation*}
for every $n\in \N$.
\end{proposition}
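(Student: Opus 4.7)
I would prove both assertions simultaneously by induction on $n$, taking $C := c(2\alpha/3)$, where $c(\cdot)$ is the constant from Lemma \ref{lem:separation}, and fixing $L = L(\alpha) > 0$ sufficiently large at the end. The base case $n = 1$ is immediate, since $[\phi(t_0), \phi(t_1)]$ is the first geodesic piece of $\phi$, so the angle at $\phi(t_1)$ equals $\theta_1 \geq \alpha > 2\alpha/3$ and $d(\phi(t_0), \phi(t_1)) = \ell_1$.

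For the inductive step, assume both statements for $n-1$. The distance inequality for $n$ is the easy half: applying Lemma \ref{lem:separation} to the triangle with vertices $\phi(t_0), \phi(t_{n-1}), \phi(t_n)$, whose angle at $\phi(t_{n-1})$ exceeds $2\alpha/3$ by the inductive angle bound, gives $d(\phi(t_0), \phi(t_n)) \geq d(\phi(t_0), \phi(t_{n-1})) + \ell_n - C$, and combining with the inductive distance bound yields the required inequality for $n$.

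The angle bound at $\phi(t_n)$ is harder. Let $\gamma_n$ denote the angle at $\phi(t_n)$ in the triangle $\triangle(\phi(t_0), \phi(t_{n-1}), \phi(t_n))$, and let $\beta_n$ denote the angle at $\phi(t_n)$ between $[\phi(t_0),\phi(t_n)]$ and $[\phi(t_n),\phi(t_{n+1})]$. Since the three angles $\beta_n$, $\theta_n$, $\gamma_n$ are angles between pairs of geodesic directions emanating from the same point $\phi(t_n)$, they satisfy the triangle inequality $\beta_n + \gamma_n \geq \theta_n$, so $\beta_n \geq \theta_n - \gamma_n \geq \alpha - \gamma_n$. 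Hence it suffices to show $\gamma_n < \alpha/3$. Because $\widetilde{\M}$ is CAT(-1), $\gamma_n$ is bounded by the corresponding angle $\hat\gamma_n$ in the comparison triangle in $\mathbb{D}$, and similarly the comparison angle at the vertex corresponding to $\phi(t_{n-1})$ is at least the angle in $\widetilde{\M}$, hence exceeds $2\alpha/3$. Writing $d_{n-1}=d(\phi(t_0),\phi(t_{n-1}))$ and $d_n=d(\phi(t_0),\phi(t_n))$, the hyperbolic law of cosines then yields
\[
\cosh(d_n) \geq \cosh(d_{n-1})\cosh(\ell_n)\bigl(1 - \cos(2\alpha/3)\bigr),
\]
and, applied once more to $\cos\hat\gamma_n$, produces an estimate of the form $\cos\hat\gamma_n \geq 1 - K(\alpha)/\cosh(\ell_n)^2$, where $K(\alpha)$ depends only on $\alpha$ because the factor $\cosh(d_{n-1})$ cancels between numerator and denominator. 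Choosing $L$ large enough that this lower bound exceeds $\cos(\alpha/3)$ forces $\hat\gamma_n < \alpha/3$, and thus $\beta_n > 2\alpha/3$, closing the induction.

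The main obstacle is the uniformity of the angle estimate in $n$: the side $d_{n-1}$ of the relevant comparison triangle grows without bound as $n$ increases, so there is no absolute geometric control one can appeal to directly. The CAT(-1) assumption is essential precisely at this point, since the exponential growth of $\cosh$ in $\mathbb{D}$ is what produces the cancellation of $\cosh(d_{n-1})$ in the expression for $\cos\hat\gamma_n$, leaving a bound depending only on $\ell_n$ and $\alpha$, and allowing a single choice of $L = L(\alpha)$ to work at every stage of the induction.
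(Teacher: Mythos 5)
Your proof is correct and follows essentially the same route as the paper: an induction that carries both the distance estimate (Lemma \ref{lem:separation} applied at $\phi(t_{n-1})$, where the inductive hypothesis gives an angle greater than $\frac{2\alpha}{3}$) and the auxiliary claim that the angle at $\phi(t_n)$ between $[\phi(t_{n-1}),\phi(t_n)]$ and $[\phi(t_n),\phi(t_0)]$ is less than $\frac{\alpha}{3}$, closed by the triangle inequality for angles at $\phi(t_n)$. The only difference is that the paper simply asserts the small-angle step for triangles with long sides, whereas you justify it explicitly via the comparison triangle and the hyperbolic law of cosines (with the cancellation of $\cosh(d_{n-1})$ giving uniformity in $n$); this fills in a detail rather than changing the argument.
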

\begin{proof}
We begin with a general observation. Consider a geodesic triangle in $\widetilde{\M}$ with vertices $\{x,y,z\}$ and an angle at vertex $y$ greater than $\frac{2\alpha}{3}$. By Lemma \ref{lem:separation}, there exists a constant $C=c\big(\frac{2\alpha}{3}\big)>0$ such that 
$$d(x,z)\ge d(x,y)+d(y,z)-C.$$
Additionally, if $d(x,y), d(y,z)\geq L$ (for some sufficiently large $L=L\big(\frac{2\alpha}{3}\big)$), the interior angle at vertex $z$ is at most $\frac{\alpha}{3}$. We assume $L>c,$ which implies $d(x,z)\ge L$. Now, consider a point $w\in\widetilde{\M}$ such that the interior angle at vertex $z$ between the geodesic segments $[y,z]$ and $[z,w]$ is greater than $\alpha$, and that $d(z,w)\ge L$. Then, the interior angle at $z$ between the geodesic segments $[x,z]$ and $[z,w]$ is at least  $\frac{2\alpha}{3},$ and by Lemma \ref{lem:separation}, we obtain $$d(x,w)\ge d(x,z)+d(z,w)-c\ge d(x,y)+d(y,z)+d(z,w)-2C.$$ 
Observe that the triangle $\{x,z,w\}$  satisfies conditions similar  to those of $\{x,y,z\}$: the angle at vertex $z$ is greater than $\frac{2\alpha}{3}$, both $d(x,z)$ and $d(z,w)$ are greater than $L$, and the angle at $w$ is at most $\frac{\alpha}{3}$. 

\begin{figure}[h]
\begin{overpic}[scale=.37
  ]{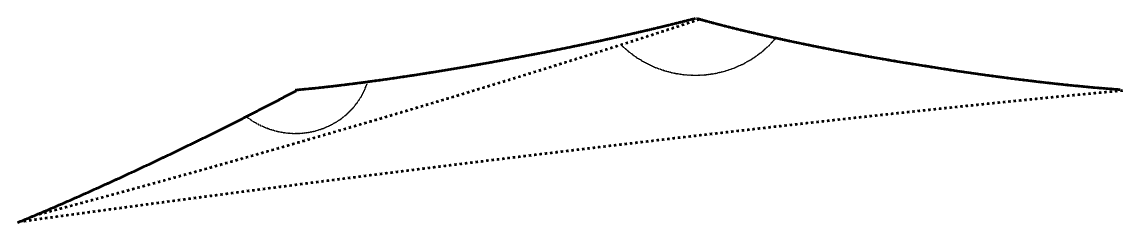}
      \put(0,0.097){$x$}
\put(25,14.7){$y$}
\put(60,20.7){$z$}
\put(98.2,13.3){$w$}
\put(25.5,11.1){\footnotesize{$\geq\alpha$}}
\put(58.5,16.4){\footnotesize{$\geq \frac{2\alpha}{3}$}}
\end{overpic}
\caption{Triangles and angles}
\label{figure2}
\end{figure}

We proceed by induction, and assume that the interior angle at $\phi(t_{n-1})$ between the geodesic segments $[\phi(t_{n-2}), \phi(t_{n-1})]$ and $[\phi(t_{n-1}), \phi(t_{0})]$  is at most $\frac{\alpha}{3}$, and that 
$$d(\phi(t_0), \phi(t_{n-1}))\ge \sum_{k=1}^{n-1}\ell_k-(n-2)C.$$ 
In particular, $d(\phi(t_0), \phi(t_{n-1}))\ge L$. Since the interior angle at $\phi(t_{n-1})$ between $[\phi(t_{n-2}),\phi(t_{n-1})]$ and $[\phi(t_{n-1}),\phi(t_{n})]$ is at least $\alpha$, the angle between at $\phi(t_{n-1})$ between $[\phi(t_0),\phi(t_{n-1})]$ and $[\phi(t_{n-1}),\phi(t_n)]$ is at least $\frac{2\alpha}{3}$. By Lemma \ref{lem:separation}, it follows that 
$$d(\phi(t_0), \phi(t_{n}))\ge d(\phi(t_0), \phi(t_{n-1}))+d(\phi(t_{n-1}), \phi(t_{n})) -C \ge \sum_{k=1}^{n}\ell_k-(n-1)C.$$ 
Since $d(\phi(t_{n-1}),\phi(t_n))\ge L$ and $d(\phi(t_0), \phi(t_{n-1}))\ge L$, the angle at $\phi(t_{n})$ between  $[\phi(t_{n-1}), \phi(t_{n})]$ and $[\phi(t_{n}), \phi(t_{0})]$ is at most $\frac{\alpha}{3}$. Finally, note that the interior angle at $\phi(t_{n-1})$ between the geodesic segments $[\phi(t_0),\phi(t_{n-1})]$ and $[\phi(t_{n-1}),\phi(t_{n})]$ is greater than $\frac{2\alpha}{3}$
\end{proof}

\section{Proof of Theorem \ref{main1}}\label{sec:proofs}

In this section, we prove that the Hausdorff dimension of the diverging on average radial limit set $\Lambda_\Gamma^{\infty, rad}$ is equal to $\delta_\Gamma^\infty.$  The proof is divided into two parts: first, we show that $\delta_\Gamma^\infty$ is an upper bound for the Hausdorff dimension, and then we prove that it is a lower bound.

Fix a reference point $o\in\widetilde{\M}$. We will estimate the Hausdorff dimension of $\Lambda_\Gamma^{\infty, rad}$ with respect to the Gromov-Bourdon visual distance $d_o$. For $\xi\in\partial_\infty\widetilde{\M}$, let $v_\xi\in T^1\widetilde{\M}$ be the unit vector based at $o$ that points towards $\xi$, and denote by $\xi_t$ the arc length parametrization of the geodesic ray $[o,\xi)$ with  $\xi_0=o$.

For a subset $W$ of $\M$ or $\widetilde{\M}$, we denote by $W(R)$ its $R$-neighborhood in the respective space. Let $(K_n)_{n\in\N}$ be an increasing sequence of compact, pathwise connected sets with piecewise $C^1$ boundaries, such that $\M=\bigcup_{n\in\N} K_n$ and $K_{n+1}\subseteq \text{int}(K_n)$ for all $n$.  Let $\widetilde{K}_n$ be a nice pre-image of $K_n$ that contains $o$. We may assume that $\widetilde{K}_n\subseteq\text{int}(\widetilde{K}_{n+1})$ for every $n\in\N$ (see Lemma \ref{lem:nicepreimage}). Denote by $\Delta_n$ the diameter of $\widetilde{K}_n$. Note that $\delta_\Gamma^{\infty}=\lim_{n\to\infty}\delta_{\Gamma_{K_n}}$(see Remark \ref{rem:comp}). Denote by $\pi:T^1\widetilde{\M}\to \widetilde{\M}$ the canonical projection.


\subsection{The upper bound}\label{part1} 

Fix $\varepsilon>0$. Choose $n_0 \in \N$ such that $\delta_{\Gamma_{K_n}} < \delta_\Gamma^\infty + \varepsilon$ for every $n \ge n_0$. Let $m \ge n_0$, and for simplicity, set $K = K_m$ and $\widetilde{K} = \widetilde{K}_m$. 

For $\gamma\in\Gamma$, let $\hat\gamma:[0, d(o,\gamma\cdot o)]\to \widetilde{\M}$, be the arc-length parametrization of the geodesic segment $[o,\gamma\cdot o]$. Given $\alpha\in (0,1)$, we define
\[
S_{\alpha} = \Big\{\gamma\in\Gamma : \frac{1}{d(o,\gamma\cdot o)}\int_0^{d(o,\gamma\cdot o)} \chi_{\Gamma \widetilde{K}(R)}(\hat\gamma(t))dt \leq \alpha \Big\}.
\]
Note that $\Gamma\widetilde{K}(R) = p_\Gamma^{-1}(K(R))$. In particular, $S_\alpha$ is the set of elements in $\Gamma$ such that the geodesic segment $p_\Gamma([o, \gamma\cdot o])$ spends at most an $\alpha$ proportion of its length in the $R$-neighborhood of $K$. For $d \in \N$ and $\ell \in \N$, define
$$S_{\alpha}^d=\{\gamma\in S_\alpha: d(o,\gamma\cdot o)\geq d\},\quad\text{ and }\quad A_{\alpha}^\ell=\{\gamma\in S_\alpha : \ell\leq d(o,\gamma \cdot o)<\ell+1\}.$$

Let $\RR_{n}$ be the set of diverging on average radial limit points $\xi\in \Lambda_\Gamma^{\infty, rad}$  such that 
$p_\Gamma(v_\xi)\in T^1\M$ is recurrent to $T^1K_n$. More precise, for every $T>0$, there exists $t>T$ such that $g_t(p_\Gamma(v_\xi))\in T^1 K_n$. Equivalently, $$\RR_{n}=\{\xi\in \Lambda_\Gamma^{\infty, rad}: \{\gamma\in\Gamma:[o,\xi)\cap\gamma \widetilde{K}_n\ne \emptyset\}\text{ is infinite}\}.$$ 
Observe that $\Lambda_\Gamma^{\infty,rad}=\bigcup_{n\ge J}\RR_{n}$, for every $J\in\N$. 

Let $r=\text{diam}(\widetilde{K})$. 

\begin{lemma}\label{lem:tech} Let $\alpha>0$ and $d>0$. If $\xi\in\RR_m$, then there exists $\gamma\in S_{\alpha}^d$ such that $[o,\xi)\cap\gamma \widetilde{K}\ne \emptyset$; in particular, $\xi\in \O_o(\gamma\cdot o,r)$.\end{lemma}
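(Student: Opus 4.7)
The plan is to exploit both conditions packaged in the definition of $\RR_m$: divergence on average forces the proportion of time the geodesic ray $[o,\xi)$ spends in any compact subset of $\M$ to tend to zero, while the recurrence of $p_\Gamma(v_\xi)$ to $T^1K_m$ supplies an infinite sequence of ``witness'' elements $\gamma_n\in\Gamma$ and times $T_n\to\infty$ with $\xi_{T_n}\in\gamma_n\widetilde{K}$. For each such $\gamma_n$ I would then compare the geodesic segment $[o,\gamma_n\cdot o]$ with the ray $[o,\xi)$ and show that, for $n$ large, $\gamma_n$ satisfies both requirements in the definition of $S_\alpha^d$; the conclusion about the shadow is then immediate.

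The first technical step uses the CAT($-1$) comparison of Lemma \ref{lem:upa0}. Setting $L_n=d(o,\gamma_n\cdot o)$ and noting that $\xi_{T_n}$ and $\gamma_n\cdot o$ both lie in $\gamma_n\widetilde{K}$, I get $|L_n-T_n|\leq r$ (so both go to infinity), and the lemma applied with $Q=r$ yields
\[
d(\hat\gamma_n(t),\xi_t)\leq \tfrac{1}{2}\qquad\text{for every }t\in[0,\,L_n-2r-\log 2].
\]
This uniform closeness allows me to transfer the ``small fraction inside $\Gamma\widetilde{K}(R)$'' property from the ray to the segment, at the modest cost of replacing $R$ by $R+\tfrac{1}{2}$ on the ray side.

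Concretely, whenever $t$ lies in the common good interval and $\hat\gamma_n(t)\in\Gamma\widetilde{K}(R)$, the $\tfrac{1}{2}$-closeness forces $\xi_t\in\Gamma\widetilde{K}(R+\tfrac{1}{2})=p_\Gamma^{-1}(K(R+\tfrac{1}{2}))$. Applying the definition of divergence on average to the compact subset $p_\Gamma^{-1}(\overline{K(R+\tfrac{1}{2})})\cap T^1\M$ gives
\[
\frac{1}{T_n}\int_0^{T_n}\chi_{\Gamma\widetilde{K}(R+\tfrac{1}{2})}(\xi_t)\,dt \longrightarrow 0
\]
as $n\to\infty$. The remaining interval $[L_n-2r-\log 2,\,L_n]$ contributes at most a constant to the integral along $\hat\gamma_n$, and $L_n/T_n\to 1$, so the normalized integral of $\chi_{\Gamma\widetilde{K}(R)}$ along $\hat\gamma_n$ also tends to $0$. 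Hence, for every sufficiently large $n$, both $L_n\geq d$ and the averaged quantity is $\leq\alpha$, so $\gamma_n\in S_\alpha^d$. The claim $\xi\in\mathcal{O}_o(\gamma_n\cdot o,r)$ is automatic: $[o,\xi)$ meets $\gamma_n\widetilde{K}$, which is contained in $B(\gamma_n\cdot o,r)$ because $o\in\widetilde{K}$ and $\mathrm{diam}(\widetilde{K})=r$. The main (mild) obstacle is controlling the slight inflation from $R$ to $R+\tfrac{1}{2}$ when passing between the two paths; this is harmless precisely because divergence on average is a condition over \emph{every} compact set.
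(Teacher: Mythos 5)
Your proposal is correct and follows essentially the same strategy as the paper: use recurrence to $T^1K_m$ to produce deep orbit points $\gamma$ with $[o,\xi)\cap\gamma\widetilde{K}\neq\emptyset$, compare $[o,\gamma\cdot o]$ with $[o,\xi)$, and transfer the divergence-on-average estimate from the ray to the segment at the cost of slightly enlarging the compact set. The only cosmetic differences are that the paper picks a single sufficiently deep $\gamma$ directly (rather than a sequence $\gamma_n$ and a limit), and compares the two geodesics via convexity of the distance function (getting the uniform bound $d(\hat\gamma(s),\xi_s)\le 2r$ on the whole of $[0,d_1]$, with no tail to handle) instead of invoking the CAT$(-1)$ comparison Lemma~\ref{lem:upa0}; both routes are sound.
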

\begin{proof} Set $n\in\N$ such that ${K}(2+2r)\subseteq {K}_n$. Since $\xi\in \Lambda_\Gamma^{\infty, rad}$, we have that $$\lim_{T\to\infty}\frac{1}{T}\int_0^T\chi_{p_\Gamma^{-1} K_n}(\xi_t)dt=0,$$
where $\xi_t$ is the point along the ray $[o,\xi)$ such that $d(o,\xi_t)=t$.  There exists $d_0$ such that $$\frac{1}{T}\int_0^T\chi_{p_\Gamma^{-1} K_n}(\xi_t)dt\le \alpha,$$
for all $T>d_0$. Since $\xi\in \RR_m$, there exists $\gamma\in \Gamma$ such that $[o,\xi)\cap \gamma\widetilde{K}\ne \emptyset$ and $d_1:=d(o,\gamma\cdot o)>\max\{d_0,d\}$. Define $t_0\geq 0$ such that  $\xi_{t_0}\in \gamma\widetilde{K}$. In particular, we have $d(\xi_{t_0},\gamma\cdot o)\leq r$, which implies $|t_0-d_1|=|d(o,\xi_{t_0})-d(o,\gamma\cdot o)|\le r.$ Also note that $d(\gamma\cdot o, \xi_{d_1})\le d(\gamma\cdot o, \xi_{t_0})+d(\xi_{t_0}, \xi_{d_1})\le 2r$. By the convexity of the distance function, we have $d(\hat{\gamma}(s),\xi_s)\le 2r$ for every $0\le s\le  d_1$. Observe that  if $\hat{\gamma}(s)\in \Gamma \widetilde{K}(2)$, then $\xi_s\in\Gamma \widetilde{K}(2+2r)=p_\Gamma^{-1}(K(2+2r))\subseteq p_\Gamma^{-1}K_n$. Therefore, $$\frac{1}{d_1}\int_0^{d_1}\chi_{\Gamma \cdot K(2)}(\hat{\gamma}(t))dt\le \frac{1}{d_1}\int_0^{d_1}\chi_{p_\Gamma^{-1} K_n}(\xi_t)dt\le \alpha.$$
We conclude that $\gamma\in S_\alpha^d$.
\end{proof}



The following result provides an estimate for the number of periodic orbits that spend a substantial portion of their time outside an $R$-neighborhood of a given compact set in  $T^1\widetilde{\M}$. This result, as applied to counting closed geodesics, requires accounting for multiplicities and evaluating the distinct lifts that intersect a nice preimage. In their proof, they specifically estimate the cardinality of  $A^{\alpha}_\ell$.

\begin{proposition}[{\cite[Theorem 5.1]{gst}}]\label{prop:gst1} Let $T_0>0$ and $\eta>0$. Then, for every $\alpha\in (0,1]$ and $R\geq 2$, there exists a positive number $\psi=\psi(\widetilde{K},\eta,\alpha/R)$ such that
\begin{align*}
\limsup_{\ell\to\infty}\frac{1}{\ell}\log(\# A^\alpha_\ell)\leq (1-\alpha)\delta_{\Gamma_{K}} +\alpha \delta_\Gamma+\eta+\psi(\widetilde{K},\eta,\alpha/R).
\end{align*}
Moreover, for $\eta>0$ fixed, $\psi(\widetilde{K},\eta,\alpha/R)$ tends monotonically to 0 when $\alpha/R$ tends to 0.
\end{proposition}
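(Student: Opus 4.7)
The plan is to decompose each $\gamma\in A^\alpha_\ell$ according to the sojourn pattern of the geodesic segment $[o,\gamma\cdot o]$ with respect to $\Gamma\widetilde{K}(R)$: alternate between ``excursion'' factors lying (up to bounded enlargement) in $\Gamma_{\widetilde{K}}$ and ``return'' factors in $\Gamma$, and then count the resulting words. The target bound has the shape
$$
\#A^\alpha_\ell \le \exp\bigl((1-\alpha)\ell\,\delta_{\Gamma_K}+\alpha\ell\,\delta_{\Gamma}+\ell\eta+\ell\,\psi(\widetilde{K},\eta,\alpha/R)\bigr),
$$
where the three dominant contributions come respectively from counting excursion letters, return letters, and the combinatorial choice of sojourn profile.

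\textbf{Decomposition and encoding.} Fix $\gamma\in A^\alpha_\ell$ and parametrize $[o,\gamma\cdot o]$ by arc length. The maximal intervals on which this parametrization is outside $\Gamma\widetilde{K}(R)$ (\emph{excursions}) alternate with maximal intervals inside $\Gamma\widetilde{K}(R)$ (\emph{returns}); by the defining condition of $A^\alpha_\ell$ the total return length is at most $\alpha\ell$. Choose a threshold $T_0=T_0(R,\widetilde{K},\alpha,\eta)$ and merge every excursion of length $<T_0$ with its adjacent returns, so that every surviving excursion has length $\ge T_0$; by choosing $T_0$ large enough, the inside-length increase absorbed by this merging is at most $\eta\ell/4$. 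For each surviving excursion, choose $\Gamma$-translates $\gamma_j^\pm\cdot o$ within distance $R$ of the two endpoints of the excursion in $\widetilde{\M}$. Lemmas \ref{lem:upa0}--\ref{lem:upa} (with parameter $Q=R+\Delta$) together with Definition \ref{def:fundgpoutW} then identify the element $h_j:=(\gamma_j^-)^{-1}\gamma_j^+$ with an element of a finite union of bilateral cosets $u\Gamma_{\widetilde{K}}v$ whose cardinality depends only on $R$ and $\widetilde{K}$ (using Proposition \ref{prop:ccecompacts}); each return is encoded by an arbitrary $g_j\in\Gamma$ whose displacement matches the return length up to $O(R+\Delta)$. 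Thus $\gamma$ is determined, up to bounded overcount, by a word alternating $h_j$ and $g_j$ with $N$ letters of each kind.

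\textbf{Counting the word.} For $a$ sufficiently large (depending on $\eta$), the definitions of $\delta_\Gamma$ and $\delta_{\Gamma_K}$ give the unit-bucket estimates
$$
\#\{\gamma\in\Gamma: a\le d(o,\gamma\cdot o)<a+1\}\le e^{a(\delta_\Gamma+\eta/4)},\qquad \#\{h\in\Gamma_{\widetilde{K}}: a\le d(o,h\cdot o)<a+1\}\le e^{a(\delta_{\Gamma_K}+\eta/4)}.
$$
For a fixed profile of bucket-lengths for the $h_j$ and $g_j$, the product of per-bucket counts is bounded by $\exp\bigl((1-\alpha)\ell(\delta_{\Gamma_K}+\eta/4)+\alpha\ell(\delta_\Gamma+\eta/4)+O(NR)\bigr)$, using that excursions total $(1-\alpha)\ell+O(NR)$ and returns total at most $\alpha\ell+\eta\ell/4+O(NR)$. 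The number of length-profiles is bounded by a multinomial coefficient of order $\binom{\ell}{2N}$, whose logarithm per unit $\ell$ is comparable to a binary entropy $H(N/\ell)$. Since each surviving excursion has length $\geq T_0$, we have $N\le \ell/T_0$; by tuning $T_0\sim R/\alpha$ (consistent with the requirement $T_0\gg R+\Delta$ whenever $\alpha/R$ is small) this yields $N/\ell\lesssim \alpha/R$, so $H(N/\ell)\to 0$ as $\alpha/R\to 0$. This contribution, together with the $O(NR/\ell)$ boundary corrections, is the source of $\psi(\widetilde{K},\eta,\alpha/R)$.

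Dividing by $\ell$ and taking $\limsup_{\ell\to\infty}$ yields the claimed inequality, and the monotone decay $\psi(\widetilde{K},\eta,\alpha/R)\to 0$ follows from the vanishing of the binary entropy near $0$ together with the shrinking boundary corrections. The principal obstacle lies in the decomposition step: one must absorb into the error terms the mismatch between $\Gamma\widetilde{K}$ (which defines $\Gamma_{\widetilde{K}}$) and its $R$-neighborhood $\Gamma\widetilde{K}(R)$ (which defines $A^\alpha_\ell$), the bounded ambiguity in choosing $\gamma_j^\pm$, and the combinatorial cost of merging short excursions, uniformly as $\alpha/R\to 0$; the stability of long geodesic segments in CAT($-1$) geometry provided by Lemmas \ref{lem:upa0}, \ref{lem:upa}, and \ref{lem:kr} is the crucial tool that makes the excursion-word encoding canonical enough for counting.
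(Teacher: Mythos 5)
First, note that the paper does not prove this proposition at all: it is quoted directly from \cite[Theorem 5.1]{gst}, so the only ``proof'' in the paper is the citation, and your sketch has to be measured against the Gou\"ezel--Schapira--Tapie argument itself. Your overall architecture --- cut $[o,\gamma\cdot o]$ into excursion letters lying in finitely many sets of the form $u\Gamma_{\widetilde{K}}v$ and return letters in $\Gamma$, count each kind by unit-length buckets at rates $\delta_{\Gamma_K}$ and $\delta_\Gamma$, and absorb the combinatorial choice of the sojourn profile into $\psi(\widetilde{K},\eta,\alpha/R)$ --- is indeed the right shape. The gap is in the one step that actually produces the $\alpha/R$ dependence: your merging of short excursions. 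You merge every excursion of length $<T_0$ into the adjacent returns and claim that ``by choosing $T_0$ large enough'' the resulting increase of inside length is at most $\eta\ell/4$. There is no such control: the total length of short excursions is not bounded by anything proportional to $\eta\ell$. Since $\Gamma\widetilde{K}(R)$ is far from convex, a geodesic can weave in and out of it, producing on the order of $\ell/T_0$ excursions each of length just below $T_0$, separated by arbitrarily short returns, while still satisfying the defining constraint of $A^\alpha_\ell$. Merging then converts a length of order $\ell$ into ``return'' time, the return budget jumps from $\alpha\ell$ to order $\ell$, and your count degenerates to rate $\delta_\Gamma$ rather than $(1-\alpha)\delta_{\Gamma_K}+\alpha\delta_\Gamma$; enlarging $T_0$ (your proposed $T_0\sim R/\alpha$ is large precisely when $\alpha/R$ is small) makes this worse, not better.

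The correct mechanism is different and requires no merging. Because of Definition \ref{def:fundgpoutW}, an excursion letter must avoid $p_\Gamma^{-1}(K)=\Gamma\widetilde{K}$ except at its two end translates, so the decomposition has to be cut at the visits of the geodesic to actual translates of $\widetilde{K}$, not at crossings of the boundary of the $R$-neighborhood. Each such visit forces the unit-speed geodesic to spend at least $2R$ units of time inside $\Gamma\widetilde{K}(R)$ (time $R$ on the way in and $R$ on the way out), and the defining condition of $A^\alpha_\ell$ caps that total time by roughly $\alpha\ell$; hence the number of letters is at most about $\alpha\ell/(2R)+1$ directly, which is exactly where $\psi(\widetilde{K},\eta,\alpha/R)$ and its monotone decay as $\alpha/R\to 0$ come from. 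In addition, the bounded multiplicity of your word encoding (recovering $\gamma$ from the letters ``up to bounded overcount'') is the other substantive half of the proof in \cite{gst} and needs the stability and shadow lemmas in a more quantitative form than you invoke. As written, then, the proposal reproduces the correct strategy but has a genuine gap at the step controlling the number of excursions, which is the heart of the estimate.
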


\begin{proof}[Proof of the bound $\emph{HD}(\Lambda_\Gamma^{\infty, rad})\le \delta_\Gamma^\infty$] 

Choose $\alpha\in (0,1]$ small enough so that applying Proposition \ref{prop:gst1} with $R=2$ and $\eta=\varepsilon$ provides that
\begin{equation}\label{eq:cexpA}
    \limsup_{\ell\to\infty}\frac{1}{\ell}\log(\# A^\alpha_\ell)\leq\delta^\infty_\Gamma+2\varepsilon.
\end{equation}
Let $c_0=c_0(r)$ be the constant in \eqref{lem:shadow} for $r={\rm diam}(\widetilde{K})$. Fix $\delta>0$ and set $d=d(\delta)\geq 1$ such that $c_0e^{-d}\leq\delta/2$. It follows by Lemma \ref{lem:tech} that there exists $\gamma\in S_\alpha^d$ such that $\xi\in\O_o(\gamma\cdot o, r)$. By \eqref{lem:shadow}, the collection of balls $\{B_o(\xi_{o,\gamma o},c_0e^{-d(o,\gamma\cdot o)})\}_{\gamma\in S_{\alpha}^d}$ is a covering of $\RR_m$ by balls of diameter less than $\delta$. Hence, 
\begin{eqnarray*}
\mathcal{H}^s_{\delta,o}(\mathcal{R}_m)&\leq&\sum_{\gamma\in S_{\alpha,d}} \bigg(c_0e^{-d(o,\gamma\cdot o)}\bigg)^s\\ 
&\leq& c_0^s\sum_{\ell\geq d} e^{-s\ell}\# A^\alpha_\ell. 
\end{eqnarray*}
It follows from (\ref{eq:cexpA}) that $\mathcal{H}^s_{\delta,o}(\mathcal{R}_m)\to 0$ as $\delta\to 0$ (in this case, $d\to \infty$) whenever $s>\delta^\infty_\Gamma+2\varepsilon$. This implies that 
\[
{\rm HD}(\mathcal{R}_m)\leq \delta^\infty_\Gamma+2\varepsilon,
\]
for every $m\ge n_0$. Since $\Lambda_\Gamma^{\infty,rad}=\bigcup_{m\ge n_0}\RR_{m}$, we conclude that ${\rm HD}(\Lambda_\Gamma^{\infty,rad})\leq \delta^\infty_\Gamma+2\varepsilon$. As $\varepsilon>0$ was arbitrary, we obtain the desired bound.  
\end{proof}

\subsection{The lower bound}\label{part2} To establish a lower bound of the Hausdorff dimension, we construct a Cantor set $E_\infty\subseteq\Lambda_\Gamma^{\infty,rad}$ and define a positive measure $\mu$ on $E_\infty$ so that Frostman's lemma applies with the appropriate exponent. First, we construct large collections of orbit points $\gamma\cdot o$ such that $p_\Gamma([o,\gamma\cdot o])$ have long excursions outside $K_n$. These collections of orbit points are located in two regions that are uniformly separated from each other, as seen from the reference point $o$. From these collections, via a gluing procedure, we build a subset of $\Gamma\cdot o$ with a tree-like structure. The limit set associated with this subset is the Cantor set $E_\infty$, which we show is contained within the diverging on average radial limit set. In the final step of the proof, we construct a measure on the Cantor set and estimate its Hausdorff dimension. 

Fix $\varepsilon>0$ and set $s=\delta_\Gamma^\infty-2\varepsilon$. Since $s+\varepsilon<\delta_\Gamma^\infty$, we have that 
\begin{align}\label{eq:infty1}
\sum_{\gamma\in\Gamma_{\widetilde{K}_n}} e^{-(s+\varepsilon)d(o,\gamma\cdot o)} = \infty,
\end{align}
for large $n$. We may assume that $s+\epsilon<\delta_{\Gamma_{K_1}}$, and in particular (\ref{eq:infty1}) holds for all $n\in \N$.


\subsubsection{Construction of large collections of orbit points} We begin by proving the existence of a point in \(\partial_\infty \widetilde{\M}\) such that neighborhoods of any size around this point contain a large number of points from \(\Gamma_{\widetilde{K}_n}\) for all \(n\). To do so, we introduce certain geometrically defined neighborhoods of a point at infinity. 

For \(\xi \in \partial_\infty \widetilde{\M}\), let \(S(\xi, t)\) denote the closure in \(\overline{\M}\) of the set of points in \(\widetilde{\M}\) that project orthogonally into the segment \([\xi_t, \xi)\), where \(\xi_t\) is the point on the geodesic ray \([o, \xi)\) such that \(d(o, \xi_t) = t\). Define \(D(\xi, t) = S(\xi, t) \cap \partial_\infty \widetilde{\M}\). Let \(\varrho > 0\) be the Gromov hyperbolicity constant of \(\widetilde{\M}\). It is proved in \cite[Lemma 2.5]{s} that for every $t\geq 2\varrho$, we have that
\[
B_o\left(\xi,e^{-(t+\varrho)}\right)\subseteq D(\xi,t)\subseteq B_o\left(\xi,e^{-(t-2\varrho)}\right).
\]

\begin{lemma} There exists $\xi_1\in\partial_\infty\widetilde{\M}$ such that  
\begin{equation}\label{eq:0}\sum_{\gamma\in\Gamma_{\widetilde{K}_n} | \gamma\cdot o\in S(\xi_1,t)} e^{-(s+\varepsilon)d(o,\gamma\cdot o)} = \infty,
\end{equation} 
for every $t>0$ and $n\in\N$.
\end{lemma}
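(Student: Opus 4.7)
The plan is to reduce the orthogonal-projection condition ``$\gamma \cdot o \in S(\xi_1, t)$'' to a condition about small visual balls on $\partial_\infty \widetilde{\M}$, and then produce $\xi_1$ as a common condensation point of the family of infinite Radon measures
\[
\mu_n := \sum_{\gamma \in \Gamma_{\widetilde{K}_n}} e^{-(s+\varepsilon)\, d(o, \gamma \cdot o)}\, \delta_{\xi_{o,\gamma \cdot o}}, \qquad n \in \N,
\]
on $\partial_\infty \widetilde{\M}$. Combining the inclusion $B_o(\xi, e^{-(t+\varrho)}) \subseteq D(\xi,t)$ with a CAT$(-1)$ comparison in the spirit of Lemma \ref{lem:shadow2}, one verifies that if $\xi_{o,\gamma \cdot o} \in B_o(\xi_1, e^{-(t+C)})$ and $d(o,\gamma \cdot o) > t + C$ for a suitable constant $C = C(\varrho)$, then the orthogonal projection of $\gamma \cdot o$ onto $[o,\xi_1)$ lies in $[\xi_{1,t}, \xi_1)$, so $\gamma \cdot o \in S(\xi_1, t)$. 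Since omitting finitely many summands does not affect divergence in \eqref{eq:0}, it suffices to find $\xi_1 \in \partial_\infty \widetilde{\M}$ such that $\mu_n(B_o(\xi_1, r)) = \infty$ for every $n \in \N$ and every $r > 0$.

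By \eqref{eq:infty1}, $\mu_n(\partial_\infty \widetilde{\M}) = \infty$ for every $n$. Define $C_n := \{\xi \in \partial_\infty \widetilde{\M} : \mu_n(B_o(\xi,r)) = \infty \ \forall r > 0\}$. Then $C_n$ is plainly closed, and is nonempty by compactness: otherwise a finite cover of $\partial_\infty \widetilde{\M}$ by balls of finite $\mu_n$-mass would force $\mu_n(\partial_\infty \widetilde{\M}) < \infty$. The remaining and crucial task is to show $\bigcap_n C_n \neq \emptyset$.

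The plan for this step is to establish that each $C_n$ is $\Gamma$-invariant, from which $C_n \supseteq \Lambda_\Gamma$ follows by the minimality of the $\Gamma$-action on $\Lambda_\Gamma$ (since $\Gamma$ is non-elementary). This invariance should follow by combining Proposition \ref{prop:ccecompacts} (which controls the relation between $\Gamma_{\widetilde{K}_n}$ and $\Gamma_{g \widetilde{K}_n} = g \Gamma_{\widetilde{K}_n} g^{-1}$ up to a finite union of conjugates), the conformality formula \eqref{eq:conformal} for visual metrics, and the elementary bound $|d(o, g\gamma \cdot o) - d(o, \gamma \cdot o)| \leq d(o, g \cdot o)$. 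Together, these show that the pushforward $g_* \mu_n$ is controlled, up to a multiplicative factor depending on $g$ and a finite combination with translates, by $\mu_n$ itself, so the condensation property transfers from $\xi$ to $g\xi$. Any point $\xi_1 \in \Lambda_\Gamma \subseteq \bigcap_n C_n$ then fulfills the conclusion.

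The main obstacle is precisely this last step: $\Gamma_{\widetilde{K}_n}$ is not a subgroup, and the identification $g \cdot \xi_{o, \gamma \cdot o} \leftrightarrow \xi_{o, g\gamma \cdot o}$ is only approximately valid at scale $e^{-d(o, \gamma \cdot o)}$, which is exactly the scale governing condensation. Keeping the coset shift, the metric distortion, and the boundary-point approximation under simultaneous control is the core technical work of the proof.
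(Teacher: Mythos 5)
Your first two steps are sound and essentially reproduce the fixed-$n$ part of the paper's argument: the reduction of the condition $\gamma\cdot o\in S(\xi_1,t)$ to a visual-ball condition is standard CAT$(-1)$ geometry, and the compactness argument showing that each condensation set $C_n$ is closed and nonempty is exactly how the paper produces, for each $n$, a point $\eta_n$ all of whose neighborhoods carry a divergent $\Gamma_{\widetilde{K}_n}$-series. The genuine gap is the step the lemma actually turns on: exhibiting one point in $\bigcap_n C_n$. You propose to prove that each $C_n$ is $\Gamma$-invariant and then invoke minimality of the action on $\Lambda_\Gamma$, but you do not prove the invariance, and the route you sketch cannot deliver it as stated. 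Since $\Gamma_{\widetilde{K}_n}$ is not a subgroup, pushing the mass accumulating at $\xi$ forward by $g$ produces mass carried by $g\Gamma_{\widetilde{K}_n}=\Gamma_{g\widetilde{K}_n}\,g$, and Proposition \ref{prop:ccecompacts} only gives $\Gamma_{g\widetilde{K}_n}\subseteq\bigcup_{i,j}\gamma_i\Gamma_{\widetilde{K}_n}\gamma_j^{-1}$ for a finite set depending on $g$ and $n$; writing $g\beta=\gamma_i\beta'\gamma_j^{-1}g$ with $\beta'\in\Gamma_{\widetilde{K}_n}$, the direction $\xi_{o,g\beta\cdot o}$ is comparable at scale $e^{-d(o,\beta'\cdot o)}$ to $\gamma_i\cdot\xi_{o,\beta'\cdot o}$, not to $\xi_{o,\beta'\cdot o}$. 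What this argument yields is therefore only the quasi-invariance $g\,C_n\subseteq\bigcup_i\gamma_i\,C_n$, with correction elements depending on $g$ and $n$, and minimality cannot be applied to that. The assertion $\Lambda_\Gamma\subseteq C_n$, which is precisely what you need, is left unproved — as you yourself acknowledge — so the proposal is a plan rather than a proof.

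For comparison, the paper never uses equivariance or minimality. Having found, for each fixed $n$, a condensation point $\eta_n$ (a point of your $C_n$), it runs a second accumulation argument in $n$: an accumulation point $\xi_1$ of $(\eta_n)_n$ works because sets of the form $S(\eta_n,t')$ nest inside $S(\xi_1,t)$, so for each $t$ the divergence at some large index is transported to $\xi_1$, and one then passes from that index to an arbitrary fixed index $n_0$ using the relation between the families $\Gamma_{\widetilde{K}_n}$ along the nested exhaustion. If you want to complete your argument, the natural repair is this double-accumulation scheme — take $\xi_1$ as a limit of points $\eta_n\in C_n$ — rather than seeking a single point of $\bigcap_n C_n$ through invariance; note that even then the cross-index comparison of the sets $\Gamma_{\widetilde{K}_n}$ must be handled with care, since exact inclusions between them are not automatic, and this bookkeeping (not the fixed-$n$ compactness step) is where the remaining content lies.
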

\begin{proof} Fix $n\in \N$. For $m\in\N$, there exists $\eta_{n,m}\in \partial_\infty\widetilde{\M}$ such that 
$$\sum_{\gamma\in\Gamma_{\widetilde{K}_n} | \gamma\cdot o\in S(\eta_{n,m},m)} e^{-(s+\varepsilon)d(o,\gamma\cdot o)} = \infty.$$
For this, it is enough to consider a finite covering of $\partial_\infty\widetilde{\M}$ with sets of the form $\text{int}(S(\eta,m))$ and to use equation (\ref{eq:infty1}). Let $\eta_n$ be an accumulation point of the sequence $(\eta_{n,m})_m$. Note that if $t>0$, there exists $m\in\N$ such that $S(\eta_{n,m},m)\subseteq S(\eta_n,t)$. Therefore,
$$\sum_{\gamma\in\Gamma_{\widetilde{K}_n} | \gamma\cdot o\in S(\eta_{n},t)} e^{-(s+\varepsilon)d(o,\gamma\cdot o)} = \infty,$$
for every $t>0$. Let $\xi_1$ be an accumulation point of $(\eta_n)_n$. Observe that for every $t>0$, there exists $n\in\N$ and large enough $t'$ such that $S(\eta_n,t')\subseteq S(\xi_1,t)$. The result follows from combining this inclusion with the divergent series above. 
\end{proof}

Since $\Gamma$ is non-elementary, there exists $g\in\Gamma$ such that $\xi_2:=g\cdot\xi_1\neq\xi_1$. Define $$B^1:=S(\xi_1,t)\quad\text{ and }\quad B^2:=S(\xi_2,t),$$ where $t\geq 2\varrho$ is large enough so that $B^1\cap B^2=\emptyset$, and $o$ and $g\cdot o$ lie outside $B^1\cup B^2$. Furthermore, by taking $t$ sufficiently large, we can ensure the existence of a small constant $\alpha>0$ such that, for any $x\in B^1$ and $y\in B^2$,  the interior angle at $o$ between the geodesic segments $[o,x]$ and $[o,y]$ is greater than $3\alpha$. 

\begin{remark}\label{rem:nokappa} Let $\gamma \in \Gamma$ and $x\in B^\kappa$, where $\kappa\in \{1,2\}$. Then, there exists $\kappa_1\in\{1,2\}$ such that, for every $z\in B^{\kappa_1}$, the interior angle at $o$ between the segments $[o,\gamma \cdot x]$ and $[o,z]$ is greater than $\alpha$. To prove this, argue by contradiction, and assume there exist $z_1\in B^1$ and $z_2\in B^2$ such that, for each $i\in\{1,2\}$, the interior angle at $o$ between  $[o,\gamma \cdot x]$ and $[o,z_i]$ is less than $\alpha$. This would imply that the angle at $o$ between $[o,z_1]$ and $[o,z_2]$ is less than $2\alpha$, which is a contradiction. 
\end{remark}

Let $D=D(\alpha)>0$ be the constant obtained from Theorem \ref{teo:gl}, and $C=C(\alpha)$ the constant obtained in Proposition \ref{prop:geoconst} (note that $C(\alpha)=c(\frac{2\alpha}{3})$, where $c(\frac{2\alpha}{3})$ is the constant in Lemma \ref{lem:separation} for angle $\frac{2\alpha}{3}$). Define $L=L(\alpha)$ to be sufficiently large  to satisfy the conclusions of both Theorem \ref{teo:gl} and Proposition \ref{prop:geoconst}. Set $r=2D$ and $c=\max\{c_0,e^{2\varrho}\}$, where $c_0=c_0(r)$ is the constant given in  \eqref{lem:shadow}. Let $q=q(2c, C+1)$ be the constant obtained in Corollary \ref{cor:disjointballs}. Let $B(q)=\{\gamma\in \Gamma: d(\gamma\cdot o, o)<q\}$. 

\begin{remark}\label{rem:Bq} For any $x\in \Gamma \cdot o$, the ball $B(x,q)$ contains at most $\#B(q)$ points of $\Gamma\cdot o$. It follows that any subset $A\subseteq \Gamma\cdot o$ has a subset of size at least $\#A/\#B(q)$, in which every pair of points is separated by a distance of at least $q$. To construct such set, start selecting a point $x_1\in A$, then choose a point $x_2\in A\setminus B(x_1,q)$, then a point $x_3\in A\setminus (B(x_1,q)\cup B(x_2,q))$, and so on. This process can be repeated at least $\#A/\#B(q)$ times.\end{remark}

 Define 
\[
\Gamma^1_n=\Gamma_{\widetilde{K}_n} \quad \mbox{and} \quad \Gamma^2_n=g\Gamma_{\widetilde{K}_n}.
\] 
Since $g\cdot S(\xi_1,t+d(o,g\cdot o))\subseteq B^2$, by equation \eqref{eq:0} and the triangle inequality, we have that
\begin{equation}\label{eq:p1}
\sum_{\gamma\in\Gamma^1_n | \gamma\cdot o\in B^1} e^{-(s+\varepsilon)d(o,\gamma\cdot o)} = \infty,
\quad
\mbox{and}
\quad
\sum_{\gamma\in\Gamma^2_n | \gamma\cdot o\in B^2} e^{-(s+\varepsilon)d(o,\gamma\cdot o)} = \infty.
\end{equation}
for all $n\in\N$. Let $A_\ell=\{x\in \widetilde{\M}:\ell\leq d(o,x)<\ell+1\}$. Observe that for $\kappa \in \{1,2\}$ and $n\in\N$, we have
\begin{align}\label{a1}
    \limsup_{\ell\to\infty} \sum_{\gamma\in\Gamma^\kappa_n | \gamma\cdot o\in B^\kappa\cap A_\ell} e^{-s d(o,\gamma\cdot o)}=\infty. 
\end{align}
Indeed, if this is not the case, there exist $n\in \N$, $\kappa\in \{1,2\}$ and a constant $C>0$ such that $\sum_{\gamma\in\Gamma^\kappa_n | \gamma\cdot o\in B^\kappa\cap A_\ell} e^{-sd(o,\gamma\cdot o)}\le C,$ for all $\ell \in \N.$ It would follow that
\begin{eqnarray*}
    \sum_{\gamma\in\Gamma^\kappa_n | \gamma\cdot o\in B_\kappa}e^{-(s+\varepsilon)d(o,\gamma o)} &=& \sum_{\ell \in \N} \sum_{\gamma\in\Gamma^\kappa_n | \gamma\cdot o\in B^\kappa\cap A_\ell}e^{-(s+\varepsilon)d(o,\gamma o)}    \leq C\sum_{\ell \in \N}e^{-\varepsilon \ell}<\infty,
\end{eqnarray*}
contradicting \eqref{eq:p1}. Thus, by (\ref{a1}), we conclude that
\begin{equation*}\label{eq:p2}
\limsup_{\ell\to\infty} e^{-\ell s}\#\{\gamma\in \Gamma^\kappa_n | \gamma\cdot o\in B^\kappa\cap A_\ell\}=\infty,
\end{equation*}
holds for all $\kappa\in\{1,2\}$ and $n\in\N$. We choose a collection of lengths $\{\ell^\kappa_n\}_{\kappa\in\{1,2\},n\in\N}$ such that 
\begin{equation}\label{eq:ell} 
\ell^\kappa_n> \max\{\ell^1_{n-1},\ell^2_{n-1},6n(\Delta_n+2D+C+\log(3)+r+L)\},
\end{equation}
and
\begin{equation}\label{eq:p3} e^{-\ell^\kappa_n s}\#\{\gamma\in \Gamma^\kappa_n | \gamma\cdot o\in B_\kappa\cap A_{\ell^\kappa_n}\}\ge 2e^s\#B(q),
\end{equation}
where $\Delta_n$ is the diameter of $\widetilde{K}_n$.

By Remark \ref{rem:Bq} and \eqref{eq:p3}, for each $\kappa\in\{1,2\}$ and $n\in\N,$ there is a finite subset $V^\kappa_{n}\subseteq B^\kappa\cap A_{\ell^\kappa_n}\cap \Gamma^\kappa_n\cdot o$ such that the following holds:
\begin{enumerate}
    \item\label{ip} for any two distinct points $x,x'\in V^\kappa_{n}$, we have $d(x, x')>q$. 
    \item\label{ipp} $e^{-s\ell^\kappa_n}\#V^\kappa_{n}\geq 2e^s.$
\end{enumerate}

The following lemma will be needed in the next subsection. 

\begin{lemma}\label{lem:nltimes} Let $A=\max\{C,D\}$. There exists a sequence $(h_n)_{n\in\N}$ of positive integers such that the following holds: Define $s_0=0$ and $s_{n}=\sum_{i=1}^{n} h_i$ for $n\ge 1$. For $m\in [s_{n-1}+1,s_n]$, we set $a(m)=n$. Then, if we consider $L_{m}=\ell_{a(m)}^{\kappa(m)}$, where   $\kappa(m)\in \{1,2\}$, it follows that $$\lim_{n\to\infty}\frac{L_{n+1}}{\sum^{n}_{i=1} (L_i -2A)}=0, \text{ and that }\lim_{n\to\infty}\frac{{\sum_{i=1}^{n} \frac{1}{a(i)}L_{i}} }{\sum^{n}_{i=1} (L_i -2A)}=0$$
\end{lemma}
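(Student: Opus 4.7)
The plan is to define $h_n$ recursively so that the ``mass'' $h_n m_n$ of block $n$ dominates both the next individual length and everything accumulated before, where $m_n=\min\{\ell_n^1,\ell_n^2\}$ and $M_n=\max\{\ell_n^1,\ell_n^2\}$. Concretely, I would choose $h_n\in\N$ satisfying
\[
h_n m_n \;\geq\; \max\bigl\{\, n\, M_{n+1},\; 3\, h_{n-1}\, M_{n-1}\,\bigr\},
\]
with $h_1\in\N$ arbitrary. Since $\kappa(m)\in\{1,2\}$ is arbitrary, the quantities $L_m$ can range between $\ell_{a(m)}^1$ and $\ell_{a(m)}^2$, so everything below is phrased in terms of the extremal values $m_k$ and $M_k$.

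A preliminary reduction: because $2D + C \ge A = \max\{C,D\}$, the lower bound \eqref{eq:ell} gives $m_k\ge 6kA$. Hence every $L_i\ge m_{a(i)}\ge 6A$, so $L_i - 2A\ge \tfrac{2}{3}L_i$ and therefore $\sum_{i=1}^n (L_i-2A)\ge \tfrac{2}{3}\sum_{i=1}^n L_i$. For the first limit, set $N=a(n)$. If $n=s_N$, then $L_{n+1}\le M_{N+1}$ while $\sum_{i=1}^n L_i\ge h_N m_N\ge N M_{N+1}$, giving a ratio of at most $\tfrac{3}{2N}$. If $s_{N-1}<n<s_N$, then $L_{n+1}\le M_N$ and $\sum_{i=1}^n L_i\ge h_{N-1}m_{N-1}\ge (N-1)M_N$ by the same growth condition applied at level $N-1$; the ratio is at most $\tfrac{3}{2(N-1)}$. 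Both bounds tend to $0$ as $n\to\infty$.

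The second limit is the main step. Group terms by block: set $\sigma_k=\sum_{i : a(i)=k}L_i$, $\alpha_k=\sigma_k/k$, and $\beta_k=\sigma_k-2Ah_k$. Then $\beta_k\ge \tfrac{2}{3}\sigma_k$ gives the pointwise inequality $\alpha_k\le \tfrac{3}{2k}\beta_k$. The growth condition $h_{k+1}m_{k+1}\ge 3h_k M_k$ yields $\beta_{k+1}\ge \tfrac{2}{3}h_{k+1}m_{k+1}\ge 2h_k M_k\ge 2\beta_k$, so $(\beta_k)$ grows at least geometrically with ratio $2$. A routine calculation for geometric sequences weighted by $1/k$ then shows $\sum_{k=1}^N \tfrac{\beta_k}{k}=O(\beta_N/N)$, while $\sum_{k=1}^N \beta_k\ge \beta_N$, so the ratio at $n=s_N$ is $O(1/N)$. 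For $n$ strictly inside block $N$ one splits numerator and denominator into the completed-blocks part plus the current-block part: the completed-blocks quotient is $O(1/N)$ by the previous argument, and the current-block quotient is bounded by $\tfrac{3}{2N}$, independently of the partial sum of $L_i$ over the block so far.

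The main obstacle is precisely that $M_k/m_k$ is a priori uncontrolled, so $\ell_k^1$ and $\ell_k^2$ cannot be treated as essentially equal; the recursive choice of $h_n$ sidesteps this by absorbing the gap into the block sizes, making $h_n m_n$ much larger than $h_{n-1}M_{n-1}$.
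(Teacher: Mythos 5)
Your proof is correct, and the overall strategy matches the paper's: fix the lengths $\ell_n^\kappa$ first, then choose $(h_n)$ growing fast enough relative to them, and deduce both limits by elementary asymptotic estimates (your reduction $L_i-2A\ge\tfrac{2}{3}L_i$ from \eqref{eq:ell} is sound, since $\ell_n^\kappa>6n(2D+C)\ge 6nA$). The difference lies in the second limit. The paper observes that it holds for \emph{any} sequence of positive integers $(h_n)$: by Stolz--Ces\`aro applied to $A_n=\sum_{i\le n}\frac{1}{a(i)}L_i$ and $B_n=\sum_{i\le n}(L_i-2A)$, the increment ratio is $\frac{1}{a(n+1)}\cdot\frac{L_{n+1}}{L_{n+1}-2A}\to 0$ because $a(n)\to\infty$ and $L_n\to\infty$, so only the first limit actually constrains the choice of $(h_n)$ (for which the paper uses the crude bound $L_{s_n+k+1}/\sum_{i\le s_n+k}(L_i-2A)\le \max\{\ell^1_{n+2},\ell^2_{n+2}\}/\big((\sum_{i\le n}h_i)(\min\{\ell^1_1,\ell^2_1\}-2A)\big)$ and takes $\sum_{i\le n}h_i$ to dominate $\max\{\ell^1_{n+2},\ell^2_{n+2}\}$). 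You instead build the stronger recursive condition $h_nm_n\ge\max\{nM_{n+1},\,3h_{n-1}M_{n-1}\}$ into the choice, get geometric growth of the block masses $\beta_k$, and conclude via the weighted-geometric-series estimate $\sum_{k\le N}\beta_k/k=O(\beta_N/N)$ together with a mediant inequality for partial blocks; this is more hands-on but self-contained, at the cost of a slightly heavier construction. (The only blemishes are cosmetic: the recursion at $n=1$ references undefined $h_0,M_0$ while you also declare $h_1$ arbitrary, so your block-$1$ bounds are not guaranteed; since only finitely many indices are affected, the limits are untouched.)
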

\begin{proof}
Consider any sequence $(h_n)_n$ of positive integers. For, $1\leq k \leq h_{n}$, we have
\begin{eqnarray*}
\frac{L_{s_n+k+1}}{\sum_{i=1}^{s_n+k} (L_i -2A)}\leq  \frac{\max\{\ell_{n+2}^1,\ell_{n+2}^2\}}{\sum_{i=1}^{n} h_i(\min\{\ell_i^1,\ell_i^2\}-2A)}\leq \frac{\max\{\ell_{n+2}^1,\ell_{n+2}^2\}}{(\sum_{i=1}^{n} h_i)(\min\{\ell_{1}^1,\ell_{1}^2\}-2A)}
\end{eqnarray*}
To obtain the first limit, it suffices to choose the sequence $(h_n)_n$ such that 
\[
\lim_{n\to\infty}\frac{\max\{\ell_{n+2}^1,\ell_{n+2}^2\}}{\sum_{i=1}^{n} h_i}=0.
\] 
The second limit is a direct consequence of the Stolz-Ces\`aro theorem. Indeed, let $A_n= \sum_{i=1}^{n} \frac{1}{a(i)}L_{i} $ and $B_n= \sum^{n}_{i=1} (L_i -2A) $. Then,
\begin{eqnarray*}
\lim_{n\to\infty}\frac{A_{n+1}-A_n}{B_{n+1}-B_n}= \lim_{n\to\infty}\frac{1}{a(n+1)}\frac{L_{n+1}}{L_{n+1}-2A}=0,
\end{eqnarray*}
since $(a(n))_n$ and $(L_n)_n$ are non-decreasing and divergent sequences. This already implies the desired limit $\lim_{n\to\infty}A_n/B_n=0$. 
\end{proof}

\subsubsection{Construction of tree and a Cantor set at infinity}  Let \( (h_n)_n \) be the sequence given in Lemma \ref{lem:nltimes}. We assume \( h_1 = 2 \) and \( h_2 \ge 2 \). Set $s_0=0$ and $s_n=\sum_{i=1}^{n} h_i$ for $n\ge 1$. For $m\in [s_{n-1}+1,s_n]$, we set $a(m)=n$.

For \(x \in V^\kappa_n\), let \(\gamma_x \in \Gamma^\kappa_n\) denote the group element such that \(x = \gamma_x \cdot o\). By Remark \ref{rem:nokappa}, there exists \(\kappa_x \in \{1, 2\}\) such that for every \( y \in B^{\kappa_x}\), the geodesic segments \([\gamma_{x}^{-1} \cdot o, o]\) and \([o, y]\) form an interior angle at \( o \) greater than \( \alpha \).

We will use the sets \( (V^{\kappa}_n)_{\kappa,n} \) to construct a tree \( \T = \bigcup_{n=0}^\infty \T_n \) within \( \Gamma \cdot o \), where \( \T_n \) represents the \( n \)-th level of the tree. Given $x\in \T_n$, we denote by $\T(x)$ the set of \emph{children} of $x$, which is subset of $\T_{n+1}$. 

 The construction proceeds inductively, beginning with \( \T_0 = \{o\} \). In general, if $x\in \T_n$ and $y\in\T(x)$, then $[x,y]$ is the translate of a segment of the form $[o, z]$, where $z\in V_m^{1}\cup V_m^2$ and $m=a(n)$. 




\noindent\\
\emph{First step of the construction}.  Since \(\kappa_x\) is either 1 or 2, there exists \(\kappa_1 \in \{1, 2\}\) and a subset \(W_1 \subseteq V^1_1 \) such that \(\# W_1 \geq \frac{1}{2} \# V^1_1\) and \(\kappa_x = \kappa_1\) for all \( x \in W_1 \). Notice that for any \( x_1 \in W_1 \) and \( x_2 \in V^{\kappa_1}_1 \), the geodesic segments \([\gamma_{x_1}^{-1} \cdot o, o]\) and \([o, x_2]\) form an interior angle at \( o \) greater than \( \alpha \), and therefore the path
\[
[o, x_1] \cup [x_1, \gamma_{x_1} \cdot x_2] = [o, \gamma_{x_1} \cdot o] \cup [\gamma_{x_1} \cdot o, \gamma_{x_1} \gamma_{x_2} \cdot o]
\]
has an interior angle at \( x_1=\gamma_{x_1} \cdot o \) greater than \( \alpha \). Define $\T_1=W_1$, $\kappa_0=1$ and $\kappa(1)=\kappa_1$.

Similarly, there exists  \(\kappa_2 \in \{1, 2\}\)  and a subset \( W_2 \subseteq V_1^{\kappa_1} \) such that \(\# W_2 \geq \frac{1}{2} \# V^{\kappa_1}_1\) and \(\kappa_x = \kappa_2\) for all \( x \in W_2 \). For every $x_2\in W_2$ and $x_3=\gamma_{x_3}\cdot o\in V_2^{\kappa_2}$, the interior angle at $o$ between $[\gamma_{x_2}^{-1}\cdot o, o]$ and $[o, x_3]$ is greater than $\alpha$, and therefore the path $$[\gamma_{x_1}\cdot o, \gamma_{x_1}\gamma_{x_2}\cdot o]\cup [\gamma_{x_1}\gamma_{x_2}\cdot o,\gamma_{x_1}\gamma_{x_2}\gamma_{x_3}\cdot o],$$
has interior angle at $\gamma_{x_1}\gamma_{x_2}\cdot o$ greater than $\alpha$. Hence, if \( x_1 \in W_1 \), $x_2\in W_2$, and \( x_3 \in V^{\kappa_2}_2 \), the path
\[
[o,  \gamma_{x_1} \cdot o] \cup [\gamma_{x_1} \cdot o, \gamma_{x_1} \gamma_{x_2}\cdot o] \cup [ \gamma_{x_1} \gamma_{x_2}\cdot o,  \gamma_{x_1} \gamma_{x_2}\gamma_{x_3}\cdot o]
\]
has interior angles greater than $\alpha$ at the vertices. Set $\T_2=\bigcup_{x\in \T_1} \gamma_x W_2$ and $\kappa(2)=\kappa_2$.

Proceeding similarly, there exists  \(\kappa_3 \in \{1, 2\}\)  and a subset \( W_3 \subseteq V_2^{\kappa_2} \) such that \(\# W_3 \geq \frac{1}{2} \# V^{\kappa_2}_2\) and \(\kappa_x = \kappa_3\) for all \( x \in W_3 \). Define $\T_3=\bigcup_{x\in \T_2} \gamma_x W_3$ and $\kappa(3)=\kappa_3$. 

Observe that if $y_i\in W_i$ for $i\le 3$ and $y_4\in V_2^{\kappa_3}$, then the path $$[o,\gamma_{y_1}\cdot o]\cup[\gamma_{y_1}\cdot o,\gamma_{y_1}\gamma_{y_2}\cdot o]\cup[\gamma_{y_1}\gamma_{y_2}\cdot o,\gamma_{y_1}\gamma_{y_2}\gamma_{y_3}\cdot o]\cup [\gamma_{y_1}\gamma_{y_2}\gamma_{y_3}\cdot o, \gamma_{y_1}\gamma_{y_2}\gamma_{y_3}\gamma_{y_4}\cdot o]$$ has interior angle at the vertices greater than $\alpha$.\\

\begin{figure}[h]
    \centering
    \begin{overpic}[width=1\linewidth, grid=false]{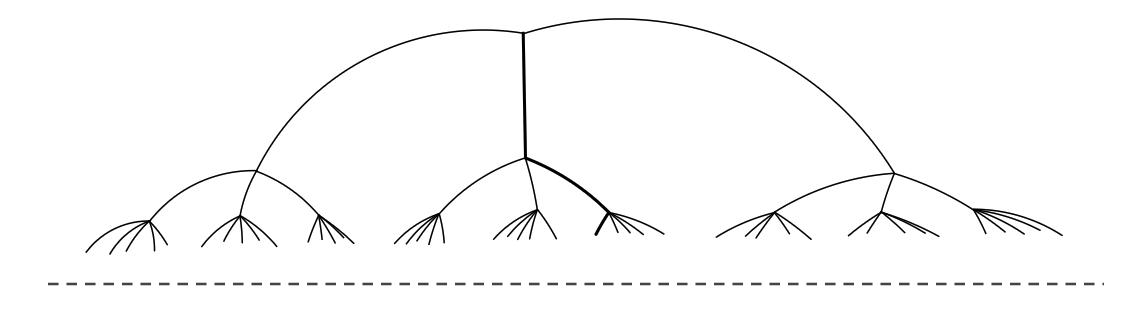}
    \put(45.52,27){$o$}
    \put(45.52,25.5){$\bullet$}
    \put(45.7,14.3){$\bullet$}
    \put(46.8,15.8){$\gamma_{x_1}\cdot o$}
    \put(53.2,9.5){$\bullet$}
    \put(54.3,11){$\gamma_{x_1}\gamma_{x_2}\cdot o$}
    \put(51.8,7.1){$\bullet$}
    \put(52,6){\footnotesize{$\gamma_{x_1}\gamma_{x_2}\gamma_{x_3}\cdot o$}}
    \put(20,5){\small{$\vdots$}}
    \put(46,5){\small{$\vdots$}}
    \put(78,5){\small{$\vdots$}}
    \put(96,5){\small{$\vdots$}}
    \put(95,27){$\T_0$}
    \put(95,16.8){$\T_1$}
    \put(95,12){$\T_2$}
    \put(95,8){\small{$\T_3$}}
    \put(93,0){\color{gray}{\small{$\partial_\infty\M$}}}
    \end{overpic}
    \caption{The construction of $\T$}
\end{figure}


\noindent 
\emph{The inductive construction}. Suppose the tree has been constructed up to level \( m_0 \), where \( m_0 = s_{n_0+1} + k_0 = \sum_{i=1}^{n_0} h_i + k_0 \), with \( n_0 \geq 1 \) and \( 1 \leq k_0 \leq h_{n_0+1} \). In the initial step, we constructed the tree for \( n_0 = 1 \) and \( k_0 = 1 \), where $m_0=3$. We now proceed to describe the construction of level  \( m_0 + 1 \). 

At this stage, we have defined sets \( W_m \) and values \( \kappa_m \in \{1, 2\} \) for each \( m \le m_0 \), and defined the tree inductively by \( \T_m = \bigcup_{x \in \T_{m-1}} \gamma_x W_m \) for \( m \le m_0 \). Furthermore, if $y_i\in W_i$, for all $1\le i\le m_0$, then the geodesic path 
\begin{align}\label{path}[o,\gamma_{y_1}\cdot o]\cup[\gamma_{y_1}\cdot o,\gamma_{y_1}\gamma_{y_2}\cdot o]\cup\ldots\cup [\gamma_{y_1}\ldots \gamma_{y_{m_0-1}} \cdot o, \gamma_{y_1}\ldots \gamma_{y_{m_0-1}}\gamma_{y_{m_0}} \cdot o]
\end{align} 
has interior angles greater than $\alpha$ at each vertex (note that $\gamma_{y_1}\gamma_{y_{2}}\ldots\gamma_{y_i}\cdot o\in \T_i$).

We assume that for $m\in [1+\sum_{i=1}^{n_0}h_i, k_0+ \sum_{i=1}^{n_0} h_i ]$, we have that
\begin{enumerate}
\item \( W_m \subseteq V_{n_0+1}^{\kappa_{m - 1}} \),
\item \( \# W_m \ge \frac{1}{2} \# V_{n_0+1}^{\kappa_{m-1}} \), and
\item \( \kappa_x = \kappa_m \) for every \( x \in W_m\) and set $\kappa(m)=\kappa_m$.
\end{enumerate}

The definition of the $m_0+1$ level is divided into two cases. In the first case, the length of the new branch remains about the same, while in the second case, it increases. \\

\textbf{Case 1} \((1 \le k_0 \le h_{n_0+1} - 1)\): Choose \( \kappa_{m_0+1} \in \{1, 2\} \) such that there exists a set \( W_{m_0+1} \subseteq V_{n_0+1}^{\kappa_{m_0}} \) with \( \# W_{m_0+1} \ge \frac{1}{2} \# V_{n_0+1}^{\kappa_{m_0}} \), and \( \kappa_x = \kappa_{m_0+1} \) for every \( x \in W_{m_0+1} \). Set $\kappa(m_0+1)= \kappa_{m_0+1}$.\\

\textbf{Case 2} \((k_0 = h_{n_0+1})\): Choose \( \kappa_{m_0+1} \in \{1, 2\} \) such that there exists a set \( W_{m_0+1} \subseteq V_{n_0+2}^{\kappa_{m_0}} \) with \( \# W_{m_0+1} \ge \frac{1}{2} \# V_{n_0+2}^{\kappa_{m_0}} \), and \( \kappa_x = \kappa_{m_0+1} \) for every \( x \in W_{m_0+1} \). Set $\kappa(m_0+1)= \kappa_{m_0+1}$.\\

Let $m_0+1$ level of the tree is defined by \( \T_{m_0+1} = \bigcup_{x \in \T_{m_0}} \gamma_x W_{m_0+1} \). By construction, if $x\in \T_{m_0+1}$, then there exists $\{y_i\in W_i: i\in \{1,\ldots,m_0+1\}\}$, such that $x=\gamma_{y_1}\gamma_{y_2}\ldots\gamma_{y_{m_0+1}}\cdot o$. 

Observe that if \( y_i \in W_i \) for all \( 1 \le i \le m_0 + 1 \), then the geodesic path
$$[o, \gamma_{y_1} \cdot o] \cup [\gamma_{y_1} \cdot o, \gamma_{y_1} \gamma_{y_2} \cdot o] \cup \dots \cup [\gamma_{y_1} \dots \gamma_{y_{m_0}} \cdot o, \gamma_{y_1} \dots \gamma_{y_{m_0}} \gamma_{y_{m_0+1}} \cdot o]$$
has interior angles greater than \( \alpha \) at each vertex. To prove this, it suffices to consider the last step, noting that the interior angle at \( \gamma_{y_1} \dots \gamma_{y_{m_0}} \cdot o \) is equal to the angle at \( o \) between the geodesic segments $[\gamma_{y_{m_0}}^{-1} \cdot o, o]$ and $[o, \gamma_{y_{m_0+1}} \cdot o]$, which is greater than \( \alpha \). This follows from the construction, as \( \kappa_{y_{m_0}} = \kappa_{m_0} \) and \( \gamma_{y_{m_0+1}} \cdot o = y_{m_0+1} \in W_{m_0+1} \subseteq B^{\kappa_{m_0}} \).\\

Our construction guarantees that conditions (1), (2), and (3) hold for each \( n \geq 0 \) and every \( m \in \left[ s_n+1, s_{n+1}\right] \). For $m=s_n+k$, where $1\le k\le h_{n+1}$ and $n\ge 0$, we define $L_m=\ell_{n+1}^{\kappa_{m-1}}$. Since $W_m\subseteq V_{n+1}^{\kappa_{m-1}}$, it follows that $L_m\le d(o,x)\le L_m+1$ for every $x\in W_m$. 



 By Lemma \ref{lem:nltimes}, we have that 
\begin{align}\label{eq:casi}\lim_{n\to\infty}\frac{L_{n+1}}{\sum_{k=1}^n (L_k-C)}=0,\quad\text{ and }\quad\lim_{n\to\infty}\frac{\sum_{i=1}^{n} \frac{1}{a(i)}L_{i}}{\sum_{i=1}^n (L_i - 2D)}=0.\end{align}

By the construction and condition (\ref{ipp}), we have \( e^{-sL_n} \# W_n \geq e^s \). Thus, it follows that
\begin{align}\label{bound1}
\sum_{w\in W_n} e^{-s d(o, w)} \geq e^{-s(L_n + 1)} \#W_n \geq 1.
\end{align}

Moreover, any path of the form (\ref{path}) has interior angles at the vertices greater than $\alpha$, for each $m_0$. It follows that if $y_i\in W_i$ for every $i\in \N$, the path
\begin{align}\label{path2}
[o, \gamma_{y_1} \cdot o] \cup [\gamma_{y_1} \cdot o, \gamma_{y_1} \gamma_{y_2} \cdot o] \cup \dots \cup [\gamma_{y_1} \dots \gamma_{y_{m}} \cdot o, \gamma_{y_1} \dots \gamma_{y_{m}} \gamma_{y_{m+1}} \cdot o]\cup\dots
\end{align} 
is a piecewise geodesic ray that meets the conditions of Theorem \ref{teo:gl}. Consequently, there exists a geodesic ray starting at $o$ and ending at a point $\xi\in \partial_\infty\widetilde{\M}$ that remains within Hausdorff distance $D$ of the piecewise geodesic ray. The set of all such points in $\partial_\infty\widetilde{\M}$ constructed in this manner is denoted by $E_\infty$. Note that $E_\infty\subseteq \Lambda_\Gamma^{rad}$. Define the map $$\L:\prod_{i\in\N} W_i\to \partial_\infty\widetilde{\M},$$ which assign to each $(y_i)_{i\in\N}$ the point $\xi\in\partial_\infty\widetilde{\M}$ such that the geodesic ray $[o,\xi)$ is within distance $D$ of the piecewise geodesic path given by (\ref{path2}). Thus, we have  $E_\infty=\L(\prod_{i\in\N} W_i)$. 



\begin{proposition}\label{prop:inside} $E_\infty\subseteq\Lambda_\Gamma^{\infty, rad}$.
\end{proposition}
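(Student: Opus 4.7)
The plan is to show both properties of $\xi = \L((y_i)_{i\in\N}) \in E_\infty$ separately. Let $\phi : [0,\infty) \to \widetilde{\M}$ be the piecewise geodesic ray with vertices $x_0 = o$ and $x_n = \gamma_{y_1}\cdots\gamma_{y_n}\cdot o$, and let $\xi_t$ denote the arc-length parametrization of the geodesic ray $[o,\xi)$. By Theorem \ref{teo:gl} these two rays stay within Hausdorff distance $D$ of each other.

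The radial part is easy: since the vertices $x_n \in \Gamma \cdot o$ accumulate to $\xi$ and each $x_n$ lies within distance $D$ of some point of $[o,\xi)$, the projected ray $p_\Gamma([o,\xi))$ returns infinitely often to the ball $B(p_\Gamma(o),D)\subseteq \M$, so $\xi \in \Lambda_\Gamma^{rad}$.

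For the divergent-on-average part, as noted in the observations before the definition, it suffices to show that for every compact nice set $K \subseteq \M$, the proportion of time $\xi_t$ spends in $p_\Gamma^{-1}(K)$ tends to $0$. Fix such a $K$ and choose $n_0$ large enough so that $K(3+2D) \subseteq K_{n_0}$. The key observation is that the $n$-th geodesic segment $[x_{n-1},x_n]$ of $\phi$ is a $\Gamma$-translate of a segment $[o, \gamma_{y_n}\cdot o]$ where either $\gamma_{y_n} \in \Gamma_{\widetilde{K}_{a(n)}}$ (when $\kappa(n)=1$) or $\gamma_{y_n} = g\gamma$ with $\gamma \in \Gamma_{\widetilde{K}_{a(n)}}$ (when $\kappa(n)=2$). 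Applying Lemma \ref{lem:kr} with $R = K(1)$ and the compact set $K_{a(n)}$ (valid whenever $a(n) \ge n_0$), the arc-length set
\[
J_n' = \{s \in [\tau_{n-1},\tau_n]:\phi(s) \in p_\Gamma^{-1}(K(1))\}
\]
consists of at most two intervals near the endpoints of $[\tau_{n-1},\tau_n]$ and has total measure bounded by $7\Delta_{a(n)} + c(g)$, where $c(g) = 2d(o,g\cdot o) + \log 2$.

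Next I transfer this estimate to the geodesic ray. For each $n$, let $y_n$ be a point on $[o,\xi)$ with $d(y_n,x_n) \le D$, set $\sigma_n = d(o,y_n)$, and apply Lemma \ref{lem:upa} with $Q = D$ to the geodesic segment $[y_{n-1},y_n]$ on the ray and to $[x_{n-1},x_n]$ on $\phi$ (both are long enough by (\ref{eq:ell})). This yields: for every $t \in [\sigma_{n-1}+3D,\sigma_n-3D]$ with $\xi_t \in p_\Gamma^{-1}(K)$, there exists $s \in [\tau_{n-1}+2D,\tau_n-2D]$ with $d(\xi_t,\phi(s))\le 1$, hence $s \in J_n'$; moreover $|(t-\sigma_{n-1}) - (s-\tau_{n-1})| \le D+1$ by triangle inequality at the endpoint $y_{n-1}$ and $x_{n-1}$. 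Therefore the set of bad $t$-values on the $n$-th ray segment is contained in the $(D+1)$-thickening of $J_n'$ (shifted by $\sigma_{n-1}-\tau_{n-1}$), plus two endpoint strips of total length $6D$. Since $J_n'$ has at most two components, its $(D+1)$-thickening has measure at most $|J_n'| + 4(D+1)$, and consequently the bad time on the $n$-th ray segment is at most $7\Delta_{a(n)} + c(g) + 10D + 4$ for $a(n) \ge n_0$.

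The final step is to sum and divide. For $T \in [\sigma_{N-1},\sigma_N]$ the bad time up to $T$ is at most
\[
C_0 + \sum_{n=1}^{N}\bigl(7\Delta_{a(n)} + c(g) + 10D + 4\bigr),
\]
where $C_0$ absorbs the finite contribution from segments with $a(n) < n_0$. Condition (\ref{eq:ell}) gives $\Delta_{a(n)} \le L_n/(6a(n))$, while $T \ge \sigma_{N-1} \ge \sum_{n=1}^{N-1} L_n - (N-2)C - D$ by Proposition \ref{prop:geoconst} together with the Hausdorff distance $D$. Both limits in Lemma \ref{lem:nltimes} then imply that $\sum_{n=1}^{N} L_n/a(n)$ and $N$ are negligible compared to $\sum_{n=1}^{N} L_n$, so the ratio of bad time to $T$ tends to $0$ as $T \to \infty$. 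Thus $\xi \in \Lambda_\Gamma^{\infty,rad}$.

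The main obstacle I anticipate is step three, the transfer of the excursion estimate from the piecewise ray to the true geodesic ray, because the parameters of the two rays do not match and the correspondence provided by Lemma \ref{lem:upa} is only a ``for some $s$'' statement. Handling this cleanly requires the bi-Lipschitz-with-additive-error feature noted above, together with the fact that $J_n'$ consists of only two components, which is precisely what keeps the $(D+1)$-thickening bounded by an additive constant per segment.
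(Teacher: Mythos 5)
Your proposal is correct and follows essentially the same route as the paper: approximate $[o,\xi)$ by the piecewise geodesic within Hausdorff distance $D$ (Theorem \ref{teo:gl}), control excursions into $p_\Gamma^{-1}(K)$ on each segment $[x_{n-1},x_n]$ via Lemma \ref{lem:kr}, transfer to the true ray with additive error $D+1$ via Lemma \ref{lem:upa}, and conclude with Proposition \ref{prop:geoconst}, \eqref{eq:ell} and both limits of Lemma \ref{lem:nltimes}. The only difference is bookkeeping: you thicken $J_n'$ and keep the per-segment constant $c(g)+10D+4$ explicit (noting $N\cdot\mathrm{const}$ is negligible since $L_n/a(n)$ is bounded below), whereas the paper absorbs everything into $d_n\le L_n/a(n)$ directly.
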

\begin{proof}
Let $\xi \in E_\infty$. By construction, there exists a sequence $(y_i)_i$, where $y_i \in W_i$, such that the Hausdorff distance between the geodesic ray $[o, \xi)$ and the piecewise geodesic ray constructed in (\ref{path2}) is at most $D$. For $n \in \mathbb{N}$, define $x_n = \gamma_{y_1} \cdots \gamma_{y_n} \cdot o$, which is a point in $\mathcal{T}_n$. 
 By Proposition \ref{prop:geoconst} and the choice of $C$, we have $$d(o, x_n) \geq d(o, x_{n-1}) + d(x_n, x_{n-1}) - C \geq d(o, x_{n-1}) + L_n - C.$$

Let $t_n > 0$ be such that $d(\xi_{t_n}, x_n) \leq D$. Note that $|t_n - d(o, x_n)| = |d(o, \xi_{t_n}) - d(o, x_n)| \leq d(\xi_{t_n}, x_n) \leq D.$ In particular, 
$$t_n \geq d(o, x_n) - D \geq d(o, x_{n-1}) + L_n - C - D \geq t_{n-1} + L_n - C - 2D.$$ By the choice of $L_n$, it follows that $t_n > t_{n-1}$. Note that $|(t_n-t_{n-1})-L_n|\le 2D$.

Let $Y\subseteq \mathcal{M}$ be a nice compact set. Let $n_0 \in \mathbb{N}$ be such that $Y(3) \subseteq K_n$ for every $n \geq n_0$. Since $(a(n))_n$ is a non-decreasing sequence that diverges, there exists $n_1 \in \mathbb{N}$ such that $Y(3) \subseteq K_{a(n)}$ for every $n \geq n_1$. In particular, $p_\Gamma^{-1}Y(3) \subseteq \Gamma \widetilde{K}_{a(n)}$ for every $n \geq n_1$. Define $Q_m = K_{a(m)}$ and $\Delta_{Q_m}= \Delta_{a(m)}$. 


For $n \geq n_1$, let $\gamma_n : [0, r_n] \to \widetilde{\mathcal{M}}$ be the arc-length parametrization of $[x_{n-1}, x_n]$ with $\gamma_n(0) = x_{n-1}$.   Note that $|r_n-L_n|\le 1$.

 It follows by Lemma \ref{lem:upa}, applied to the geodesic segments $(\xi_t)_{t \in [t_{n-1}, t_n]}$ and $\gamma_n,$ that if $t \in [t_{n-1} + 3D, t_n - 3D]$, then there exists $s_t\in [0, r_n]$ such that $d(\xi_t, \gamma_n(s_t)) \leq 1$. By the triangle inequality we have that $|(t - t_{n-1}) - s_t| \leq D + 1$. Also note that by Lemma \ref{lem:kr}, applied to $R = Y(1)$ and $K = Q_n$, if $\gamma_n(s) \in p_\Gamma^{-1}Y(1)$, then $s \in [0, 4\Delta_{Q_n} + c(g)] \cup [r_n - 3\Delta_{Q_n}, r_n]$ (recall that $\Gamma^1_n=\Gamma_{\widetilde{K}_n}$ and $\Gamma^2_n=g\Gamma_{\widetilde{K}_n}$)  Thus, we obtain that if $\xi_t \in p_\Gamma^{-1}Y$ and $t \in [t_{n-1} + 3D, t_n - 3D]$, then $\gamma_n(s_t) \in p_\Gamma^{-1}Y(1)$, and therefore $t \in [t_{n-1}, t_{n-1}+4\Delta_{Q_n} + c(g) + D + 1] \cup [t_n - 3\Delta_{Q_n} - 1 - D, t_n].$ Set $d_n = 4\Delta_{Q_n} + c(g) + D + 1$. Since $[t_{n-1},t_{n-1}+3D]\cup[t_n-3D,t_n]\subseteq [t_{n-1},t_{n-1}+d_n]\cup[t_n-d_n,t_n]$, we conclude that if $t\in[t_{n-1},t_n]$ and $\xi_t \in p_\Gamma^{-1}Y$, then $t \in [t_{n-1}, t_{n-1}+ d_n]\cup[t_n - d_n,t_n]$.

By (\ref{eq:ell}), we have that $a(n)d_n\le \ell_{a(n)+1}^{\kappa_{n-1}}=L_n$.  Observe that if $T = t_n + r$, where $r \in [0, t_{n+1} - t_n)$, then
$$\frac{1}{T} \int_0^T \chi_{p_\Gamma^{-1}  Y}(\xi_t) \, dt \leq \frac{C_0 + 2\sum_{i=n_1}^{n+1} d_i}{t_n+r} \leq \frac{C_0 + 2\sum_{i=1}^{n+1} \frac{1}{a(i)}L_{i}}{\sum_{i=1}^n (L_i - 2D)},$$ 
where $C_0 = \int_0^{t_{n_1}}\chi_{p_\Gamma^{-1}  Y}(\xi_t) \, dt$. It follows by (\ref{eq:casi}) that $\lim_{T \to \infty} \frac{1}{T} \int_0^T  \chi_{p_\Gamma^{-1}  Y}(\xi_t) \, dt = 0$, and we conclude that $\xi \in \Lambda_\Gamma^{\infty, \text{rad}}$.

\end{proof}

\subsubsection{Construction of a measure and bounds on the Hausdorff dimension}
Note that if $x\in \T_n$, then $\T(x)=\gamma_x W_{n+1}$, where $\T(x)\subseteq \T_{n+1}$ represents the {children} of $x$.  For each $x\in \widetilde{\M}$, let $r_x=ce^{-d(o,x)}$ and define $\beta(x)=B_o(\xi_{o,x}, 2r_x)$. 

\begin{remark}\label{rem:rx} Let $x\in \T_{n-1}$. Observe that if $y\in \T(x)$, the interior angle at $x$ between the geodesic segments $[o,x]$ and $[x,y]$ is greater than $\frac{2\alpha}{3}$ (see Proposition \ref{prop:geoconst}). By  Lemma \ref{lem:separation}, we have $d(o,x)+d(x,y)-C\le d(o,y)$. Since $d(x,y)\ge C+\log(3)$, it follows that $d(o,x)+\log(3)<d(o,y)$, which implies $r_y<\frac{1}{3}r_x$. Moreover, note that there exists $w\in W_n$ such that $y=\gamma_x w$. Thus, $L_n\le d(o,w)=d(x,y)< L_n+1$.\end{remark}

We start by establishing two lemmas.

\begin{lemma}\label{lem:a1} Let $n\ge 0$ and $x\in \T_n$. If $y\in \T(x)$, then $\beta(y)\subseteq\beta(x)$.
\end{lemma}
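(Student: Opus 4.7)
The plan is to show that for any $\eta\in\beta(y)=B_o(\xi_{o,y},2r_y)$, one has $\eta\in\beta(x)=B_o(\xi_{o,x},2r_x)$. I would proceed via shadows. Since $\eta\in B_o(\xi_{o,y},2ce^{-d(o,y)})$, Lemma \ref{lem:shadow2} yields $\eta\in\mathcal{O}_o(y,2c)$; in other words, the geodesic ray $[o,\eta)$ enters the ball $B(y,2c)$.

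The key geometric input is that, by Remark \ref{rem:rx} together with Proposition \ref{prop:geoconst} and Lemma \ref{lem:separation}, the relation $y\in\T(x)$ forces the triangle $\triangle(o,x,y)$ to be nearly degenerate along $[o,y]$:
$$d(o,y)\ge d(o,x)+d(x,y)-C,$$
so the Gromov product satisfies $\langle o,y\rangle_x\le C/2$. Since $\widetilde{\M}$ is CAT$(-1)$ (hence Gromov hyperbolic), this gives that the orthogonal projection $x^*$ of $x$ onto $[o,y]$ satisfies $d(x,x^*)\le D_0$ for some constant $D_0=D_0(C,\varrho)$, and moreover $|d(o,x^*)-d(o,x)|\le D_0$.

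Next, I would apply Lemma \ref{lem:upa0} to the rays $[o,\eta)$ and $[o,y]$, both issuing from $o$, with endpoint separation at most $2c$. Because $d(x,y)\ge L$ and $L$ dominates every additive constant in the construction by the threshold \eqref{eq:ell}, the value $t=d(o,x)$ satisfies $t<d(o,y)-4c-\log 2$, so Lemma \ref{lem:upa0} gives $d(\eta_{d(o,x)},\gamma_{d(o,x)})\le 1/2$, where $\gamma$ parametrizes $[o,y]$ by arc length. Combining with $d(\gamma_{d(o,x)},x)\le 2D_0$ yields $d(\eta_{d(o,x)},x)\le 2D_0+1/2$, and therefore $\eta\in\mathcal{O}_o(x,2D_0+1/2)$. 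Kaimanovich's right-hand inclusion in \eqref{lem:shadow} then places $\eta$ in $B_o(\xi_{o,x},c_0(2D_0+1/2)e^{-d(o,x)})$.

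The final check is that $c_0(2D_0+1/2)\le 2c$, which holds because $c=\max\{c_0(2D),e^{2\varrho}\}$ is chosen (through the selection of $D$ at the beginning of Section \ref{part2}) to dominate the Kaimanovich constant arising in this comparison. This is the main obstacle of the proof: a bookkeeping step, not a geometric one, and it clarifies why the specific constants $r=2D$ and $c=\max\{c_0,e^{2\varrho}\}$ take their particular form. No further geometric input is required beyond Lemma \ref{lem:shadow2}, Lemma \ref{lem:upa0}, and Proposition \ref{prop:geoconst}.
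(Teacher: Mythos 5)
Your argument follows a genuinely different route from the paper's, and it has a gap at its final step. The early part is fine in outline: from $\eta\in B_o(\xi_{o,y},2r_y)$ you get $\eta\in\mathcal{O}_o(y,2c)$ by Lemma \ref{lem:shadow2}, and the near-additivity $d(o,y)\ge d(o,x)+d(x,y)-C$ (Proposition \ref{prop:geoconst} and Lemma \ref{lem:separation}) together with hyperbolicity does place $x$ within a bounded distance $D_0=D_0(C,\varrho)$ of the geodesic $[o,y]$, so that after truncating $[o,\eta)$ at its entry into $B(y,2c)$ and applying Lemma \ref{lem:upa0} you obtain $\eta\in\mathcal{O}_o(x,R_0)$ for some constant $R_0$ of the form $2D_0+O(1)$. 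The problem is the conversion back into the ball $\beta(x)=B_o(\xi_{o,x},2ce^{-d(o,x)})$: you need $c_0(R_0)\le 2c$, and you assert that $c=\max\{c_0(2D),e^{2\varrho}\}$ was "chosen to dominate the Kaimanovich constant arising in this comparison." It was not. The constant $D$ comes from Theorem \ref{teo:gl} (stability of local quasi-geodesics at angle $\alpha$), while your $D_0$ comes from $C=c(\tfrac{2\alpha}{3})$ and the hyperbolicity constant $\varrho$; no inequality between $c_0(2D_0+1/2)$ and $2c_0(2D)$ (or $2e^{2\varrho}$) is established anywhere in the paper, and since $c_0(\cdot)$ is increasing this can genuinely fail if $D_0>D$. (A smaller instance of the same bookkeeping issue: condition \eqref{eq:ell} does not list $c$ among the quantities dominated by $L_n$, so even the hypothesis $d(o,x)<L_1-4c-\log 2$ of your application of Lemma \ref{lem:upa0} is not literally guaranteed by the stated thresholds, though it is easy to arrange.)

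The paper avoids this constant-matching problem entirely: it extends the pair $x$, $y$ to an infinite branch of the tree, producing a single point $\xi\in E_\infty$ whose ray $[o,\xi)$ stays within Hausdorff distance $D$ of the piecewise geodesic, hence $\xi\in\mathcal{O}_o(x,r)\cap\mathcal{O}_o(y,r)$ with the \emph{same} radius $r=2D$ for which $c\ge c_0(r)$ was fixed. Then \eqref{lem:shadow} gives $d_o(\xi_{o,x},\xi_{o,y})<r_x+r_y$, and the factor $2$ in the definition of $\beta(\cdot)$ together with $r_y<\tfrac13 r_x$ (Remark \ref{rem:rx}) yields $\beta(y)\subseteq\beta(x)$ by the triangle inequality. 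Your approach could be repaired by enlarging $c$ at the outset so that $c\ge \tfrac12 c_0(2D_0+1/2)$ (the rest of the construction is insensitive to the size of $c$), but as written the final inequality is unjustified, so the proof is incomplete.
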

\begin{proof} Let $(y_i)_i$ be a sequence with $y_i\in W_i$ for all $i\in\N$, such that $x=\gamma_{y_1}\gamma_{y_2}\ldots\gamma_{y_n}\cdot o$ and $y=\gamma_{y_1}\gamma_{y_2}\ldots\gamma_{y_n}\gamma_{y_{n+1}}\cdot o$. This sequence $(y_i)_i$ defines a point $\xi\in E$ such that the geodesic ray $[o,\xi)$  remains within Hausdorff distance $D$ from the piecewise geodesic path given by (\ref{path2}). In particular, the balls $B(x,r)$ and $B(y,r)$ intersect $[o,\xi)$, which implies $\xi\in \O_o(x,r)\cap \O_o(y,r)$. By (\ref{lem:shadow}), we have $\O_o(x,r)\subseteq B_o(\xi_{o,x},r_x)$ and $\O_o(y,r)\subseteq B_o(\xi_{o,y},r_y)$. Consequently, $d_o(\xi,\xi_{o,x})<r_x$ and $d_o(\xi, \xi_{o,y})<r_y$, so $d(\xi_{o,x},\xi_{o,y})<r_x+r_y$. Now, note that if $z\in B_o(\xi_{o,y},2r_y)$, then $d(z,\xi_{o,y})<2r_y$, and therefore $d(z,\xi_{o,x})<r_x+3r_y<2r_x$ (see Remark \ref{rem:rx}). We conclude that $\beta(y)\subseteq \beta(x)$.
\end{proof}

\begin{lemma}\label{lem:a2} The sets $\{\beta(x):x\in\T_n\}$ are pairwise disjoint for every $n\in\N$. 
\end{lemma}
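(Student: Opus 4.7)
The plan is to reduce the full pairwise disjointness at level $n$ to the disjointness of $\beta$-balls among the children of a single vertex, and then verify the latter via Corollary \ref{cor:disjointballs}. Concretely, given distinct $x, x' \in \T_n$, I would let $k$ be the largest index at which the sequences in $\prod W_i$ producing $x$ and $x'$ agree. Denote by $z \in \T_k$ their common ancestor and by $y, y' \in \T(z)$ the (necessarily distinct) children of $z$ lying on the paths to $x$ and $x'$, respectively. Iterating Lemma \ref{lem:a1} along these branches yields $\beta(x)\subseteq\beta(y)$ and $\beta(x')\subseteq\beta(y')$, reducing the problem to showing $\beta(y)\cap\beta(y')=\emptyset$.

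To apply Corollary \ref{cor:disjointballs} with $r = 2c$ (since $\beta(y) = B_o(\xi_{o,y}, 2ce^{-d(o,y)})$) and $d = C+1$ -- values that match the constant $q = q(2c,C+1)$ fixed in the construction -- I need two things. First, writing $y = \gamma_z\cdot w$ and $y' = \gamma_z\cdot w'$ for distinct $w, w' \in W_{k+1} \subseteq V^{\kappa_k}_{a(k+1)}$, the $\Gamma$-invariance of the metric together with property (\ref{ip}) of $V^\kappa_n$ yields $d(y,y') = d(w,w') > q$. Second, I must show that $d(o,y)$ and $d(o,y')$ both lie in an interval of length at most $C+1$.

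This second step is the main technical point. For $k \ge 1$, Proposition \ref{prop:geoconst} guarantees that the interior angle at $z$ between the segments $[o,z]$ and $[z,y]$ exceeds $\tfrac{2\alpha}{3}$; hence Lemma \ref{lem:separation} yields $d(o,y) \ge d(o,z) + d(z,y) - C$, while the triangle inequality gives $d(o,y) \le d(o,z) + d(z,y)$. Since $d(z,y) = d(o,w) \in [L_{k+1}, L_{k+1}+1)$ and likewise for $y'$, both $d(o,y)$ and $d(o,y')$ belong to the interval $[d(o,z) + L_{k+1} - C,\, d(o,z) + L_{k+1} + 1]$, of length $C+1$. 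The case $k = 0$ is immediate because then $z = o$ and $d(o,y) = d(z,y) \in [L_1, L_1+1)$, an interval of length $1 \le C+1$.

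With both hypotheses verified, Corollary \ref{cor:disjointballs} gives $\beta(y) \cap \beta(y') = \emptyset$; by the reduction above this forces $\beta(x) \cap \beta(x') = \emptyset$ for every pair of distinct $x, x' \in \T_n$, completing the argument. I expect the only subtle bookkeeping to lie in making sure the angle estimates of Proposition \ref{prop:geoconst} are genuinely available at the branching vertex $z$ (i.e.\ that the piecewise geodesic structure through $z$ has all the required angle and length lower bounds), which is guaranteed by the inductive construction of the tree and the choice of $L = L(\alpha)$.
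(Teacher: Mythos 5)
Your proof is correct and follows essentially the same route as the paper: reduce to siblings' balls via the nesting from Lemma \ref{lem:a1}, then get disjointness of siblings from the $q$-separation in property (\ref{ip}) together with the distance bounds $d(o,z)+L_{k+1}-C\le d(o,y)<d(o,z)+L_{k+1}+1$ (Proposition \ref{prop:geoconst}/Lemma \ref{lem:separation}, as in Remark \ref{rem:rx}) and Corollary \ref{cor:disjointballs} with $q=q(2c,C+1)$. The only difference is cosmetic: you argue via the last common ancestor of two distinct points, whereas the paper runs an induction on the tree levels.
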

\begin{proof} We will first prove that if \( y_1, y_2 \in \mathcal{T}(x) \) for some \( x \in \mathcal{T}_{n-1} \), then \( \beta(y_1) \cap \beta(y_2) = \emptyset \). In this situation, we have \( y_1 = \gamma_x \cdot p_1 \) and \( y_2 = \gamma_x \cdot p_2 \) for some \( p_1, p_2 \in W_n \). By construction, \( d(y_1, y_2) = d(p_1, p_2) > q \) (see condition (\ref{ip})). According to Remark \ref{rem:rx}, we have that
\[
d(o, x) + L_n - C \leq d(o, x) + d(x, y_1) - C \leq d(o, y_1) \leq d(o, x) + d(x, y_1) < d(o, x) + L_n + 1.
\]
Applying this same bound to \( y_2 \), we obtain $$ d(o, x) + L_n - C \leq d(o, y_1), d(o, y_2) < d(o, x) + L_n + 1. $$ By our choice of \( q = q(2c, C + 1) \) and Corollary \ref{cor:disjointballs}, it follows that \( \beta(x) = B_o(\xi_{o, x}, 2r_x) \) and \( \beta(y) = B_o(\xi_{o, y}, 2r_y) \) are disjoint. This establishes that \( (\beta(x))_{x \in \mathcal{T}_1} \) are pairwise disjoint.

Now, observe that if \( y_1 \in \mathcal{T}_2 \), then there exists \( x_1 \in \mathcal{T}_1 \) such that \( \beta(y_1) \subseteq \beta(x_1) \) (see Lemma \ref{lem:a1}). Since \( (\beta(x))_{x \in \mathcal{T}_1} \) are pairwise disjoint, this \( x_1 \) is unique. If \( y_2 \in \mathcal{T}_2 \) belongs to \( \mathcal{T}(x_1) \), then we have already shown that \( \beta(y_1) \cap \beta(y_2) = \emptyset \). If \( y_2 \in \mathcal{T}_2 \) belongs to \( \mathcal{T}(x_2) \) for some \( x_2 \neq x_1 \), then \( \beta(y_2) \subseteq \beta(x_2) \) (see Lemma \ref{lem:a1}), and since \( \beta(x_1) \cap \beta(x_2) = \emptyset \), it follows that \( \beta(y_1) \cap \beta(y_2) = \emptyset \) as well. Therefore, \( (\beta(y))_{y \in \mathcal{T}_2} \) are pairwise disjoint. The general case follows by a simple induction.
\end{proof}

It follows from Lemma \ref{lem:a1} and Lemma \ref{lem:a2} that the map $\L:\prod_{i\in\N} W_i\to \partial_\infty\widetilde{\M}$ is injective, and thus a bijection between $\prod_{i\in\N} W_i$ and $E_\infty$. 

For $n\ge 0$, define $E_n=\bigcup_{x\in \mathcal{T}_n}\beta(x)$. Note that $E_\infty=\bigcap_{n\in\N} E_n$. We define a measure $\mu$ on $E_\infty$ by setting $\mu(E_0)=1$ and  
\[
\mu(\beta(y))=\frac{e^{-sd(x,y)}}{\sum_{z\in\mathcal{T}(x)}e^{-sd(x,z)}}\mu(\beta(x)),
\] 
whenever $y\in \mathcal{T}(x)$. 

\begin{lemma}\label{lem:mu} Let $x\in \mathcal{T}_n$. Then, $\mu(\beta(x))\le e^{-sd(o,x)}$.
\end{lemma}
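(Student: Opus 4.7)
The plan is to prove this by induction on the level $n$. The base case $n=0$ is immediate: $\mathcal{T}_0=\{o\}$, and $\mu(\beta(o))=\mu(E_0)=1=e^{-s\cdot 0}=e^{-sd(o,o)}$.

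For the inductive step, suppose the bound holds for every element of $\mathcal{T}_{n-1}$, and let $y\in\mathcal{T}_n$. By construction there is a unique $x\in\mathcal{T}_{n-1}$ with $y\in\mathcal{T}(x)=\gamma_x W_n$, so $y=\gamma_x w$ for some $w\in W_n$. The defining recursion gives
\[
\mu(\beta(y))=\frac{e^{-sd(x,y)}}{\sum_{z\in\mathcal{T}(x)}e^{-sd(x,z)}}\,\mu(\beta(x)).
\]
I would use the $\Gamma$-invariance of $d$ (the isometry $\gamma_x$) to rewrite
\[
\sum_{z\in\mathcal{T}(x)}e^{-sd(x,z)}=\sum_{w'\in W_n}e^{-sd(\gamma_x\cdot o,\,\gamma_x w')}=\sum_{w'\in W_n}e^{-sd(o,w')}\ge 1,
\]
where the last inequality is exactly the estimate (\ref{bound1}) obtained earlier from the choice of the sets $W_n$ and condition (\ref{ipp}). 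Combining this with the inductive hypothesis $\mu(\beta(x))\le e^{-sd(o,x)}$ yields
\[
\mu(\beta(y))\le e^{-sd(x,y)}e^{-sd(o,x)}=e^{-s(d(o,x)+d(x,y))}.
\]
Finally, the triangle inequality $d(o,y)\le d(o,x)+d(x,y)$ gives $\mu(\beta(y))\le e^{-sd(o,y)}$, completing the induction.

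There is no real obstacle here; the lemma is essentially the consistency check that the recursive normalization chosen for $\mu$ (with denominator $\sum_{z\in\mathcal{T}(x)}e^{-sd(x,z)}$) is at least $1$ for every parent vertex, which is precisely what the lower bound (\ref{bound1}) was set up to provide. The only points to be careful about are (i) using $\Gamma$-invariance to identify the sum over children with a sum over $W_n$, and (ii) invoking the triangle inequality in the correct direction.
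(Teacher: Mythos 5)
Your proof is correct and follows essentially the same route as the paper's: induction on the level of the tree, use of $\Gamma$-invariance to rewrite the normalizing sum as $\sum_{w\in W_n}e^{-sd(o,w)}\ge 1$ (via the bound set up in the construction), and the triangle inequality to finish. The only difference is cosmetic indexing ($n-1\to n$ versus $n\to n+1$).
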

\begin{proof} The claim holds for $n=0$. We proceed by induction: assume that for every $x\in \T_n,$ we have that $\mu(\beta(x))\le e^{-sd(o,x)}$. We will show that this property also holds at level $n+1$. Let $y\in \T_{n+1},$ and consider $x\in \T_{n}$ such that $y\in \T(x)$. Then, $$\mu(\beta(y))=\frac{e^{-sd(x,y)}}{\sum_{z\in\mathcal{T}(x)}e^{-sd(x,z)}}\mu(\beta(x))\le\frac{e^{-s(d(x,y)+d(o,x))}}{\sum_{w\in W_n}e^{-sd(o,w)}}\le e^{-sd(o,y)},$$
where we used the fact that $\sum_{z\in\mathcal{T}(x)}e^{-sd(x,z)}=\sum_{w\in W_n}e^{-sd(o,w)}\ge 1$ (see (\ref{bound1})).
\end{proof}

The following proposition, together with Proposition \ref{prop:inside}, implies that $\text{HD}(\Lambda_\Gamma^{\infty, rad}) \geq \delta_\Gamma^\infty$. Note that by (\ref{eq:casi}), there exists $N^* \in \mathbb{N}$ such that for all $n \geq N^*$, we have the inequality
\begin{align}\label{eq:epsilonpp}
L_{n}+1\leq \varepsilon \left(\sum_{k=1}^{n} L_k - nC\right). 
\end{align}

\begin{proposition} $\emph{HD}(E_\infty)\ge  \delta_\Gamma^\infty-3\varepsilon$.
\end{proposition}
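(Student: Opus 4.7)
The plan is to apply Proposition \ref{prop:ledhd} to the Frostman-type measure $\mu$ built above, which is a probability measure supported on $E_\infty$. Since $\text{HD}(E_\infty)\ge \text{HD}(\mu)$ by Theorem \ref{teo:vphd}, it is enough to produce, for every $\xi\in E_\infty$, the lower bound
\[
\liminf_{r\to 0}\frac{\log\mu(B_o(\xi,r))}{\log r}\ge \delta_\Gamma^\infty-3\varepsilon.
\]

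First I would observe that for each $\xi\in E_\infty$ the construction of $E_\infty$ via the map $\L$ singles out a unique nested sequence $(x_n)_{n\ge 0}$ with $x_n\in\T_n$ and $\xi\in\beta(x_n)$ for every $n$. Because $[o,\xi)$ remains within Hausdorff distance $D$ of the piecewise geodesic path through the $x_n$, we have $\xi\in\O_o(x_n,2D)$; the shadow estimate \eqref{lem:shadow} combined with the choice $c\ge c_0$ then yields $d_o(\xi,\xi_{o,x_n})\le c_0 e^{-d(o,x_n)}\le r_{x_n}$. The triangle inequality forces $B_o(\xi,r)\subseteq \beta(x_n)=B_o(\xi_{o,x_n},2r_{x_n})$ for every $r\le r_{x_n}$.

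Given small $r>0$, I would choose $n=n(r)$ maximal with $r\le r_{x_n}$, so that $n(r)\to\infty$ as $r\to 0$. Combining the inclusion with Lemma \ref{lem:mu} gives $\mu(B_o(\xi,r))\le \mu(\beta(x_n))\le e^{-s\,d(o,x_n)}$, while the maximality of $n$ yields $r>r_{x_{n+1}}=c\, e^{-d(o,x_{n+1})}$, hence $-\log r<d(o,x_{n+1})-\log c$. Putting these two estimates together produces
\[
\frac{\log\mu(B_o(\xi,r))}{\log r}\;\ge\;\frac{s\,d(o,x_n)}{d(o,x_{n+1})-\log c}.
\]

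The final step is to verify that the right-hand side converges to $s$ as $n\to\infty$, which is the heart of the argument. The triangle inequality gives $d(o,x_{n+1})-d(o,x_n)\le L_{n+1}+1$, and Proposition \ref{prop:geoconst} gives $d(o,x_{n+1})\ge \sum_{k=1}^{n+1}L_k-nC$. The essential input here is Lemma \ref{lem:nltimes}, whose multiplicities $(h_n)$ were engineered precisely so that $L_{n+1}/\sum_{k=1}^n(L_k-2A)\to 0$; this forces $L_{n+1}=o(d(o,x_{n+1}))$, and therefore $d(o,x_n)/d(o,x_{n+1})\to 1$ while $\log c/d(o,x_{n+1})\to 0$. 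Consequently the displayed ratio converges to $s=\delta_\Gamma^\infty-2\varepsilon$, and exceeds $\delta_\Gamma^\infty-3\varepsilon$ once $r$ is small enough. The main delicate point of the plan is exactly this last passage: without the careful choice of $(h_n)$ in Lemma \ref{lem:nltimes}, a single dominating $L_{n+1}$ would keep $d(o,x_n)/d(o,x_{n+1})$ bounded away from $1$ and would force a strictly weaker effective Hausdorff exponent.
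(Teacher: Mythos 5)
Your proposal is correct. The geometric core coincides with the paper's: you use the nested balls $\beta(x_n)$ along the branch of the tree through $\xi$, the bound $\mu(\beta(x))\le e^{-sd(o,x)}$ of Lemma \ref{lem:mu}, the length/angle estimates of Remark \ref{rem:rx} and Proposition \ref{prop:geoconst}, and the growth condition engineered in Lemma \ref{lem:nltimes} to absorb the one-level loss. Where you differ is in the packaging: the paper applies the mass distribution principle directly to $E_\infty$, choosing for a given ball $\beta=B_o(\xi,t)$ the \emph{smallest} level $n$ at which $\beta$ escapes $\beta(y)$, passing to the parent $x\in\T_{n-1}$, and using \eqref{eq:epsilonpp} to convert the loss $e^{sd(x,y)}$ into a uniform Frostman bound $\mu(\beta)\le Kt^{s-\varepsilon}$; you instead choose the \emph{largest} level $n$ with $r\le r_{x_n}$ (so that $B_o(\xi,r)\subseteq\beta(x_n)$ automatically and $r>r_{x_{n+1}}$ by maximality), prove a pointwise lower local dimension bound at every $\xi\in E_\infty$, and conclude via Proposition \ref{prop:ledhd} and Theorem \ref{teo:vphd} rather than via the mass distribution principle. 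The two scale-to-level matchings are essentially dual and both are valid; your version has the small advantages of avoiding the "escaping point" argument (the inequality $t>r_y$) and of yielding the marginally sharper conclusion $\mathrm{HD}(E_\infty)\ge s=\delta_\Gamma^\infty-2\varepsilon$, while the paper's version delivers a uniform Frostman exponent, which is slightly more robust if one wants quantitative Hausdorff measure statements rather than just dimension.
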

\begin{proof} We will prove that the measure $\mu$ satisfies the assumptions of the mass distribution principle. Let $\xi \in E_\infty$ and $t > 0$ be small, and define $\beta = B_o(\xi, t)$. Let $n$ be the smallest natural number such that there exists $y \in \mathcal{T}_n$ for which $\xi\in \beta(y)$ and $\beta$ is not fully contained in $\beta(y)$. Since $t$ is small, we may assume $n \geq N^*$. Let $x \in \mathcal{T}_{n-1}$ such that $y \in \mathcal{T}(x)$. By assumption, $\beta \subseteq \beta(x)$. By Lemma \ref{lem:mu}, we have
$$\mu(\beta) \leq \mu(\beta(x)) \leq e^{-s d(o, x)} \leq e^{-s d(o, y)} e^{s d(x, y)}.$$
Since there exists $\eta \in \beta \setminus \beta(y)$, it follows that $d_o(\eta, \xi) < t$ and $d_o(\eta, \xi_{o, y}) > 2 r_y$. Note that $d_o(\xi, \xi_{o, y}) < r_y$. Consequently,
$t > d_o(\eta, \xi) > d_o(\eta, \xi_{o, y}) - d_o(\xi_{o, y}, \xi) > r_y.$ Thus, \begin{align}\label{eq:pe} e^{-d(o, y)} \leq c^{-1} t.\end{align}

By Proposition \ref{prop:geoconst}, we have $\sum_{k=1}^{n} L_k - nC \leq d(o, y).$ Additionally, note that $d(x, y) < L_n + 1$ (see Remark \ref{rem:rx}). By (\ref{eq:epsilonpp}), we conclude  that $d(x, y) < \epsilon d(o, y)$. Therefore,
\begin{align}\label{eq:pe2}\mu(\beta) \leq  e^{-s d(o, y)} e^{s d(x, y)} \leq e^{-(s-\varepsilon) d(o, y)}.\end{align}

By (\ref{eq:pe}) and (\ref{eq:pe2}) we obtain that $\mu(\beta) \leq K t^{s-\varepsilon}$ for some constant $K = c^{-(s-\varepsilon)} > 0$, where $t$ is the radius of the ball $\beta$.
By the mass distribution principle, we obtain the lower bound $\text{HD}(E_\infty) \geq s-\varepsilon= \delta_\Gamma^\infty-3\varepsilon$ 
\end{proof}

\section{Entropy of infinite measures}\label{entinf}  

In this section, we prove Theorem \ref{main2}, which establishes a relationship between the entropy at infinity of the geodesic flow and the \emph{entropy of $\sigma$-finite, ergodic, and conservative infinite measures}. The relationship between the entropy of the geodesic flow and the entropy of invariant probability measures is well known and stated in Theorem \ref{op}.

A geodesic flow-invariant Borel measure is said to be \emph{conservative} if it is supported on the non-wandering set of the geodesic flow, denoted by $\Omega$, and \emph{ergodic} if every flow-invariant set has either full or zero measure. We denote by $M_\infty(T^1\M, g)$ the space of geodesic flow-invariant $\sigma$-finite Borel measures on $T^1\M$ that are infinite, ergodic, and conservative. Let $\Omega_{DA}$ denote the set of vectors in the non-wandering set that diverge on average.

\begin{lemma}\label{lem:supportDA} If $m\in M_\infty(T^1\M,g)$, then $m(\Omega\setminus \Omega_{DA})=0$. \end{lemma}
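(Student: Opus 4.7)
The plan is to combine a $\sigma$-compact exhaustion of $T^1\M$ with the pointwise ergodic theorem for infinite, $\sigma$-finite, ergodic, and conservative invariant measures. Since $T^1\M$ is $\sigma$-compact, I will fix an increasing sequence of compact sets $W_1\subseteq W_2\subseteq \cdots$ with $\bigcup_{n\in\N} W_n=T^1\M$. As is standard for flow-invariant measures in this context, $m$ is locally finite, so $m(W_n)<\infty$ and $\chi_{W_n}\in L^1(m)$ for every $n$.

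The key input is the following consequence of Hopf's pointwise ergodic theorem for infinite, $\sigma$-finite, ergodic, and conservative invariant measures: for any nonnegative $f\in L^1(m)$,
$$\lim_{T\to\infty}\frac{1}{T}\int_0^T f(g_tv)\, dt = 0 \quad\text{for }m\text{-a.e. }v.$$
The justification is short: the averages converge $m$-a.e.\ to a $(g_t)$-invariant function, which is $m$-a.e.\ constant by ergodicity of the flow; Fatou's lemma bounds the integral of this constant by $\int f\, dm<\infty$, so the constant must vanish because $m$ is infinite.

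I will then apply this to $f=\chi_{W_n}$ for every $n$ and intersect the resulting conull sets to produce $\Omega^{\star}\subseteq\Omega$ with $m(\Omega\setminus\Omega^{\star})=0$ such that the time averages of every $\chi_{W_n}$ tend to zero along orbits starting in $\Omega^{\star}$. For an arbitrary compact $W\subseteq T^1\M$, the compactness of $W$ and the exhaustion give $W\subseteq W_n$ for some $n$, and therefore for every $v\in\Omega^{\star}$,
$$\frac{1}{T}\int_0^T \chi_W(g_tv)\, dt \;\leq\; \frac{1}{T}\int_0^T \chi_{W_n}(g_tv)\, dt \;\longrightarrow\; 0.$$
This forces $v\in \Omega_{DA}$ for every $v\in\Omega^{\star}$, hence $m(\Omega\setminus \Omega_{DA})=0$.

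The principal, and essentially only, obstacle is locating and invoking the correct continuous-time pointwise ergodic theorem in the infinite-measure setting (the vanishing of Birkhoff averages for $L^1$ observables under a conservative, ergodic, infinite invariant measure); once that is in hand, the remaining argument is a routine exhaustion combined with the observation that compact sets are absorbed by any $\sigma$-compact exhaustion. A secondary point, if the paper does not already stipulate it, is confirming the local finiteness of $m$ so that $\chi_{W_n}\in L^1(m)$, which is standard in this setting.
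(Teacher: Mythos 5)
Your overall strategy is the same as the paper's: exhaust $T^1\M$ by compacta, show that time averages of integrable observables vanish $m$-a.e., intersect the conull sets, and observe that any compact set is absorbed by the exhaustion. The difference, and the genuine gap, is in how you justify the key ergodic-theoretic input.

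You assert that for nonnegative $f\in L^1(m)$ the Birkhoff averages $\frac{1}{T}\int_0^T f(g_tv)\,dt$ ``converge $m$-a.e.\ to a $(g_t)$-invariant function,'' and then use ergodicity plus Fatou to conclude that this limit is the constant $0$. The convergence claim is unjustified: Birkhoff's pointwise ergodic theorem is a finite-measure theorem, and for an infinite, $\sigma$-finite, conservative, ergodic measure there is no a priori almost-everywhere convergence of $\frac{1}{T}\int_0^T f(g_tv)\,dt$. What is available in the infinite-measure setting is Hopf's \emph{ratio} ergodic theorem. Your Fatou argument does show that the invariant function $\liminf_{T\to\infty}\frac{1}{T}\int_0^T f(g_tv)\,dt$ is a.e.\ equal to a constant and that this constant vanishes, but Fatou gives no control on the $\limsup$; a priori the $\limsup$ could be a strictly positive constant (or $+\infty$). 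So as written, you have proved only $\liminf = 0$ a.e., which does not place $v$ in $\Omega_{DA}$.

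The paper closes exactly this gap. It invokes Hopf's ratio ergodic theorem: for $f\in L^1(m)$ and positive $h\in L^1(m)$, the ratio $\int_0^T f(g_tv)\,dt\big/\int_0^T h(g_tv)\,dt$ converges a.e.\ to $\int f\,dm\big/\int h\,dm$. Taking $h=\chi_{W_n}$ (so that the denominator is bounded by $T$), one obtains
\[
\limsup_{T\to\infty}\frac{1}{T}\int_0^T f(g_tv)\,dt
\;\le\;
\lim_{T\to\infty}\frac{\int_0^T f(g_tv)\,dt}{\int_0^T \chi_{W_n}(g_tv)\,dt}
\;=\;
\frac{\int f\,dm}{m(W_n)},
\]
and then letting $n\to\infty$ (so $m(W_n)\to\infty$) forces the $\limsup$, not merely the $\liminf$, to vanish. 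Replacing your Fatou step with this ratio argument would make your proof complete and would in fact reproduce the paper's argument almost verbatim. Your secondary remark about local finiteness of $m$ (so that $\chi_{W_n}\in L^1(m)$) is reasonable and is implicitly used by the paper as well.
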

\begin{proof} The Hopf's ergodic theorem states that for every $f\in L^1(m),$ and positive function $h\in L^1(m)$, we have that
$\lim_{T\to\infty}\int_0^T f(g_t v)dt/\int_0^T h(g_tv)dt=\int fd\mu/\int hd\mu,$
holds for $m$-a.e. $v\in T^1\M$. Let $(W_n)_n$ be a countable collection of compact sets such that $T^1\M=\bigcup_{n\in\N} W_n$ and $W_n\subseteq \text{int}(W_{n+1})$ for all $n\in\N$. Observe that if $f\in L^1(m),$ then
$$\limsup_{T\to\infty}\frac{1}{T}\int_0^T f(g_t v)dt\le \lim_{T\to\infty}\frac{\int_0^T f(g_t v)dt}{\int_0^T 1_{W_n}(g_tv)dt}=\frac{\int fd\mu}{\mu(W_n)},$$
holds for $m$-a.e. $v\in T^1\M$, and any $n\in\N$. Taking $n\to\infty$, we obtain that 
$$\lim_{T\to\infty}\frac{1}{T}\int_0^T f(g_t v)dt=0,$$
 holds for $m$-a.e. $v\in T^1\M$.  In particular, considering $f=\chi_W$, where $W\subseteq T^1\M$ is any compact set, we conclude that, on average, a generic point of $m$ spends zero proportion of time within any compact set.
\end{proof}

Following  \cite{Led} we now recall a notion of measure-theoretic entropy that allows to include infinite measures. Let $v\in T^1\M$, $r>0$ and $T>0$. The $(T,r)$-dynamical ball centered at $v$ is the set 
\[
B_T(v,r)=\{w\in T^1\M : d_S(g_tv,g_tw)\leq r \text{ for every }0\leq t\leq T\},
\]
where $d_S$ denotes the Sasaki metric on $T^1\M$. 

\begin{definition}\label{def:ent} Let $m$ be a geodesic flow invariant $\sigma$-finite Borel measure on $T^1\M$. The \emph{measure-theoretic entropy $h(m)$ of }$m$ is defined as 
\[
h(m)=\essinf_m \lim_{r\to 0}\liminf_{T\to\infty} -\frac{1}{T}\log m(B_T(v,r)).
\]
\end{definition}

Lipschitz-equivalent metrics on \( T^1 \M \) yield the same entropy value. Furthermore, if  \( m \) is an ergodic probability measure, then \( h(m) \) coincides with the entropy of \( m \) defined using finite measurable partitions (see \cite{bk}, and \cite{ri} for the non-compact case). We are interested in measures with infinite mass, for which, as far as we know, the definition of entropy based on partitions has not yet been generalized.

\begin{remark}\label{rem:fin} For the geodesic flow on a pinched negatively curved manifold a simplified entropy formula holds. Indeed, in this setting we have that
\[
h(m)=\essinf_m  \liminf_{T\to\infty} -\frac{1}{T}\log m(B_T(v,r)).
\]
 In other words, the limit on the radius is not necessary (see for instance \cite[Lemma 3.14 (1)]{pps}). 
\end{remark}


For a vector \( v \in T^1 \widetilde{\M} \), we denote by \( v(+\infty) \) the forward endpoint at infinity of the geodesic ray \( t \mapsto \pi(g_t(v)) \), where \( \pi \) is the canonical projection from \( T^1 \widetilde{\M} \) (or \( T^1 \M \)) to \( \widetilde{\M} \) (or \( \M \)). Let \( \Phi: T^1 \widetilde{\M} \to \partial_\infty \widetilde{\M} \) be the map defined by \( \Phi(v) = v(+\infty) \).


\begin{proof}[Proof of Theorem \ref{main2}] Let $v\in T^1{\M}$ be a point in the support of $m$ and $r>0$ such that $4r$ is smaller than the injectivity radius at $\pi(v)\in {\M}$. Let $\tilde{v}$ be a lift of  $v$ to $T^1\widetilde{\M}$.  The restriction of $m$ to $B(v,r)$ can be lifted to a measure $\tilde{m}$ on $B(\tilde{v},r)$. Set $\nu:=\Phi_*(\tilde{m})$. Observe that since $m$ is conservative, the support of $\nu$ is a subset of $\Lambda_\Gamma^{rad}$. Furthermore, it follows from Lemma \ref{lem:supportDA} that the support of $\nu$ is a subset of $\Lambda_\Gamma^{\infty, rad}$. We claim that 
\begin{align}\label{HDeq}
h(m)\le  \text{HD}(\nu).
\end{align}
Inequality (\ref{HDeq}), combined with Theorem \ref{main1} and Theorem \ref{teo:vphd}, completes the proof of the result.

Let $\tilde{w}\in B(\tilde{v},r)\cap \Omega_{DA}$. By \eqref{eq:conformal} and (\ref{lem:shadow}), there exists $C=C(v,o)>0$ such that  \begin{align}\label{fin1} B_o(\tilde{w}(+\infty), c_0(r)^{-1}e^{-T}/C)\subseteq \Phi(B_T(\tilde{w},r)),\end{align} where $c_0(r)$ is the constant given in (\ref{lem:shadow}). Observe that if $q\in \Phi^{-1}(\Phi(B_T(\tilde{w},r)))\cap B(\tilde{v},r)$, then there exists $q_0\in B_T(\tilde{w},r)$ such that $\Phi(q)=\Phi(q_0)$, and therefore $d_S(q_0, q) \le d_S(q_0, \tilde{w})+d_S(\tilde{w},\tilde{v})+d_S(  \tilde{v},q)<3r$. Since $q(+\infty)=q_0(+\infty)$, it follows that $d_S(g_t(q),g_t(q_0))<3r,$ for every $t\ge0$, and therefore, $d_S(g_t(q),g_t(\tilde{w}))\le d_S(g_t(q),g_t(q_0))+d_S(g_t(q_0),g_t(\tilde{w})<4r$, for every $t\in [0,T]$. We conclude that \begin{align}\label{fin2}\Phi^{-1}(\Phi(B_T(\tilde{w},r)))\cap B(\tilde{v},r)\subseteq B_T(\tilde{w},4r).\end{align}
It follows by (\ref{fin1}) and (\ref{fin2}) that 
\begin{align*} 
\nu(B_o(\tilde{w}(+\infty), c_0(r)^{-1}e^{-T}/C))&=\tilde{m}(\Phi^{-1}(B_o(\tilde{w}(+\infty), c_0(r)^{-1}e^{-T}/C)) \\ &\le \tilde{m}(\Phi^{-1}(\Phi(B_T(\tilde{w},r))))\\ &\le \tilde{m}(B_T(\tilde{w},4r))
\end{align*}
We conclude that
$$\liminf_{T\to\infty} -\frac{1}{T}\log {m}(B_T({w},4r))\le \liminf_{T\to\infty} -\frac{1}{T}\log \nu(B_o(\tilde{w}(+\infty), c_0(r)^{-1}e^{-T}/C)),$$
and therefore $h(m)\le  \text{HD}(\nu)$ (see Remark \ref{rem:fin} and Proposition \ref{prop:ledhd}).
\end{proof}

\end{document}